\newcommand{\RR}{\mathbb{R}}
\newcommand{\EE}{\mathbb{E}}
\newcommand{\NN}{\mathbb{N}}
\newcommand{\bA}{\bm{A}}
\newcommand{\bB}{\bm{B}}
\newcommand{\bx}{\bm{x}}
\newcommand{\bb}{\bm{b}}
\newcommand{\bh}{\bm{h}}
\newcommand{\bu}{\bm{u}}
\newcommand{\bz}{\bm{z}}
\newcommand{\ba}{\bm{a}}
\newcommand{\by}{\bm{y}}
\newcommand{\be}{\bm{e}}
\newcommand{\bZ}{\bm{Z}}
\newcommand{\bY}{\bm{Y}}
\newcommand{\blmbd}{\bm{\lambda}}
\newcommand{\bLmbd}{\bm{\Lambda}}
\newcommand{\bd}{\bm{d}}
\DeclareMathOperator*{\argmin}{\mathrm{argmin}}
\newcommand{\calX}{\mathcal{X}}
\newcommand{\calY}{\mathcal{Y}}
\newcommand{\calB}{\mathcal{B}}
\newcommand{\calL}{\mathcal{L}}
\newcommand{\calG}{\mathcal{G}}
\newcommand{\calS}{\mathcal{S}}
\newcommand{\calC}{\mathcal{C}}
\newcommand{\calJ}{\mathcal{J}}
\newcommand{\calQ}{\mathcal{Q}}
\newcommand{\calO}{\mathcal{O}}
\newcommand{\bD}{\bm{D}}
\newcommand{\sumn}{\sum_{i=1}^n}
\newcommand{\sumT}{\sum\limits_{ k =1}^{T}}
\newcommand{\bC}{\bm{C}}
\newcommand{\bxi}{\bm{\xi}}
\newcommand{\bG}{\bm{G}}
\newtheorem{assumption}{Assumption}
\newtheorem{definition}{Definition}
\newtheorem{lemma}{Lemma}
\newtheorem{theorem}{Theorem}
\newtheorem{proof}{Proof}
\newtheorem{remark}{Remark}[section]  
\newtheorem{proposition}{Proposition}
\newcommand{\eproof}{\hfill $\square$}
\numberwithin{equation}{section}
\setlist[enumerate]{leftmargin=.5in}
\setlist[itemize]{leftmargin=.5in}
\begin{document}

\title{DualHash: A Stochastic Primal-Dual Algorithm with Theoretical Guarantee for Deep Hashing}

\author{Luxuan Li,
      Xiao Wang, and  Chunfeng Cui$^{\ast}$
\thanks{L. Li and C. Cui are with the School of Mathematical Sciences, Beihang University, Beijing 100191, China (e-mail: luxuanli@buaa.edu.cn; chunfengcui@buaa.edu.cn).}

\thanks{X. Wang is with the School of Computer Science and Engineering, Sun Yat-Sen University, Guangzhou 510006, China (e-mail: wxucas@outlook.com).}%
\thanks{$^{\ast}$Corresponding author.}}

\markboth{}
{Li \textit{et al.}: DualHash: A Stochastic Primal-Dual Algorithm for Deep Hashing}

\maketitle
\begin{abstract}
Deep hashing converts high-dimensional feature vectors into compact binary codes, enabling efficient large-scale retrieval. A fundamental challenge in deep hashing stems from the discrete nature of quantization in generating the codes. W-type regularizations, such as $||z|-1|$, have been proven effective as they encourage variables toward binary values. However, existing methods often directly optimize these regularizations without convergence guarantees. While proximal gradient methods offer a promising solution, the coupling between W-type regularizers and neural network outputs results in composite forms that generally lack closed-form proximal solutions. In this paper, we present a stochastic primal-dual hashing algorithm, referred to as DualHash, that provides rigorous complexity bounds. Using Fenchel duality, we partially transform the nonconvex W-type regularization optimization into the dual space, which results in a proximal operator that admits closed-form solutions. We derive two algorithm instances: a momentum-accelerated version with $\mathcal{O}(\varepsilon^{-4})$ complexity and an improved $\mathcal{O}(\varepsilon^{-3})$ version using variance reduction. Experiments on three image retrieval databases demonstrate the superior performance of DualHash.
\end{abstract}

\begin{IEEEkeywords}
Hashing, deep learning, image retrieval, quantization error, nonconvex optimization, stochastic optimization
\end{IEEEkeywords}

\section{Introduction}

\label{Section: Introduction}
\IEEEPARstart{A}{} key technique in large-scale image retrieval systems is hashing. Image hashing aims to represent the information of an image using a binary code (e.g.,\( \pm 1\)) for efficient storage and accurate retrieval \cite{L2H2009}. Recently, deep learning to hash has shown significant improvements due to its robust feature extraction capability \cite{luoSurveyDeepHashing2023,xiaSupervisedHashingImage2014}, which utilizes neural networks to map high-dimensional feature vectors (e.g., 1024 dimensions) into compact binary codes (e.g., 64 bits). However, a fundamental challenge in deep hashing arises from the discrete nature of quantization. The \(sgn\) function used to generate binary codes has zero gradients almost everywhere. This gradient vanishing issue means that standard first-order methods are ineffective in training deep hashing networks \cite{Cao2017HashNetDL}. Continuous relaxation methods have been explored to address this issue, but inevitably introduce \textbf{quantization error:} the discrepancy between the optimized continuous values and the final discrete values \cite{weissSpectralHashing2008}. 
\begin{figure}[ht]
\centering
\includegraphics[width=\columnwidth]{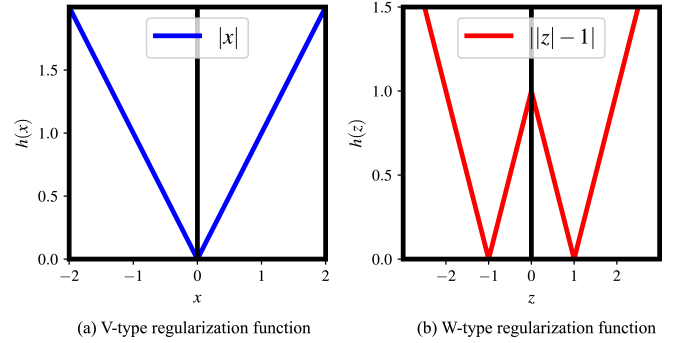}
\caption{The V-type (left) versus W-type (right) regularizations.}
\label{Fig: the W and V regularizations}
\vspace{-3ex}
\end{figure}
To mitigate this issue, researchers have developed specialized regularization techniques \cite{caoDeepCauchyHashing2018,li2017deep,liuDeepSupervisedHashing}. Among them, W-type regularizations, such as \(| |z| - 1| \), have proven effective.
These regularizers impose sharp penalties to encourage variables towards binary values (e.g., $-1$ or $1$). Mathematically, this approach naturally formulates deep hashing as a nonconvex composite optimization problem:
\begin{align}
\label{Eq: the general nonlinear composite problem in intro}
\min_{\bx \in \calX} \quad \frac{1}{n} \sumn f_i (\bx) + \sumn h(\bD_i(\bx)),
\end{align} where \(f_i\) and \(\bD_i\) are continuously differentiable and \(h\) is a proper and lower semi-continuous (l.s.c.) regularizer.

Regularization techniques are widely used in machine learning and deep learning to encourage desirable model structures \cite{Dropout2014,wen2016learning,NCR2018Survey}. Classic examples include the 
\(\ell_1\) norm to introduce sparsity \cite{tibshirani1996regression} and the \(\ell_2\) norm weight decay to prevent overfitting \cite{tikhonov1943stability}. In practice, researchers typically apply stochastic (sub)gradient methods (SGD) directly \cite{Kingma2014AdamAM,tieleman2012lecture} to optimize the objective, and deep hashing follows this paradigm as well \cite{Cao2017HashNetDL,hoeOneLossAll2021,wang2023deep}. This seemingly effective choice, however, frequently leads to suboptimal performance. The regularizer is often nonsmooth around some regions; using subgradients may result in slow convergence and oscillation.

A powerful tool to circumvent the nonsmoothness of a regularizer is via its proximal operator. Proximal-based methods thus offer a promising solution for composite optimization \cite{Parikh2013ProximalA}. For a regularized deep network with  \(\bD_i(\bx) = \bx\) in \eqref{Eq: the general nonlinear composite problem in intro}, stochastic proximal gradient methods (SPGD) have been extensively studied \cite{davis2020stochastic,wang2019spiderboost,xu2023momentum,Xu2019NonasymptoticAO,xuStochasticOptimizationDC2019}. Recently, \cite{Yang2020ProxSGD} has established global convergence of SPGD for the \(\ell_1\) regularized model and \cite{yun2021adaptive} has presented a unified adaptive SPGD framework with \(\calO(\varepsilon^{-4})\) complexity for l.s.c. regularizations. However, these excellent methods crucially rely on regularizers with tractable proximal solutions, such as the well-known soft-thresholding operator of \(\ell_1\) norm (\Cref{Fig: the W and V regularizations}(a)). 
In contrast, while the W-type regularization (\Cref{Fig: the W and V regularizations}(b)) admits a closed-form proximal operator \cite{bai2019proxquant}, their composition with neural network outputs (i.e., \(h(\bD_i(\bx))\), which is typically highly nonconvex, creates a computationally intractable subproblem. This   renders  SPGD  impractical for the problem \eqref{Eq: the general nonlinear composite problem in intro}.
\begin{figure*}[t]
    \centering
    \includegraphics[width=0.95\textwidth]{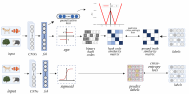}
    \caption{Comparison between deep learning for hashing and classification}
    \label{Fig:comparison_classification_vs_hash}
        \vspace{-2mm}
\end{figure*}

In fact, the nonlinear composite form \(h(\bD_i(\bx))\) presents a unique requirement in deep hashing, as the application of regularization in this field fundamentally differs from that in standard networks.  We demonstrate this distinction by comparing with a basic classification task in \Cref{Fig:comparison_classification_vs_hash}. While classification directly employs network outputs for prediction, deep hashing requires generating binary hash codes for downstream retrieval tasks. This unique requirement leads to W-type regularization, which typically constrains the final network output rather than focusing on parameter properties. Therefore, it is necessary to explicitly consider the nonlinear composite form.

Recent works \cite{duchiStochasticMethodsComposite2018,hu2023non,wang2024complexity,ADMMNolinear2024} have addressed compositional structures similar to \eqref{Eq: the general nonlinear composite problem in intro}, which ensure either weak convexity of \(h(\bD_i)\) or convexity on \(h\).  Other studies \cite{bian2021stochastic,Huang2020FasterSA} focus on stochastic optimization with linear compositional structures. On the contrary, our model faces a more challenging scenario where \(h\) is nonconvex with a nonlinear composition that implies \(h(\bD_i)\) is not weakly convex. \cite{bolteNonconvexLagrangian2018} considers the problem \eqref{Eq: the general nonlinear composite problem in intro} in deterministic setting. They converted it to a constrained optimization problem and established an asymptotic convergence via the Lagrangian-based method. However, their analysis does not provide non-asymptotic complexity guarantees for the stochastic case in deep hashing.

In this paper, we propose an efficient deep hashing algorithm to address these challenges posed by nonlinear composition with W-type regularization. Our approach presents a reformulation of the W-type regularized problem using a decouple scheme and designs a new primal-dual algorithm using Fenchel duality. We aim to bridge the theoretical-practical gap by providing rigorous non-asymptotic convergence analysis and enhanced practical performance in deep hashing. Our main contributions are as follows:
\begin{itemize}[leftmargin=2em]
\item We reformulate deep hashing models with the W-type regularization into a two-block finite-sum optimization problem. This reformulation decouples the challenging $h(\bD(\bx))$, simplifying the sequential analysis.
\item Building on this problem, we propose \textbf{DualHash}, a stochastic primal-dual deep hashing algorithm. Using Fenchel duality, we partially transform the nonconvex W-type regularization optimization into a convex dual formulation that admits closed-form updates, ensuring robustness and stable convergence. To incorporate numerical acceleration, we develop two important instances: a momentum-based implementation (\textbf{DualHash-StoM}) and a variance-reduced implementation using STORM (\textbf{DualHash-StoRM}).
\item We provide rigorous non-asymptotic complexity analysis for deep hashing optimization. Specifically, we establish an $\mathcal{O}(\varepsilon^{-4})$ complexity bound for DualHash-StoM and an improved optimal $\mathcal{O}(\varepsilon^{-3})$ complexity bound for DualHash-StoRM. Table~\ref{Table: summary of deep hashing algorithms} summarizes how our work compares to previous deep hashing methods. 
\item Extensive experiments on three standard datasets demonstrate the practical effectiveness of DualHash. In particular, our proposed method achieves consistently lower quantization errors across different bit lengths compared to baselines, validating the advantages of our primal-dual framework.
\end{itemize}

\subsection{Related Work}
\label{Section: related work}

\noindent\textbf{Optimization in deep hashing.}
Modern deep hashing methods have developed two main strategies: continuous relaxation and discrete optimization schemes. Continuous relaxation methods use smooth activation functions and mitigate quantization error by incorporating explicit regularization terms, such as W-type functions \cite{liuDeepSupervisedHashing,wang2023deep,zhuDeepHashingNetwork2016}. Other approaches explore unified formulations through single-objective continuous optimization \cite{Cao2017HashNetDL,hoeOneLossAll2021}. These algorithms use stochastic (adaptive) gradient methods to optimize the objective. Alternative strategies directly address binary constraints by introducing auxiliary hash codes \cite{liDeepSupervisedHashing,li2017deep,liFeatureLearningBased2016}. 
Please refer to \Cref{Table: summary of deep hashing algorithms} for a comparison.



\noindent\textbf{Primal-Dual and ADMM Methods for Nonconvex Optimization.}
From a constrained optimization perspective, Problem \eqref{Eq: the general nonlinear composite problem in intro} can be reformulated using auxiliary variables, leading to a nonlinearly constrained problem. There has been growing interest in primal-dual methods for such problems, particularly augmented Lagrangian methods (ALM) and their stochastic variants. A Lagrangian-based framework was pioneered for deterministic settings by \cite{bolteNonconvexLagrangian2018}. This line of work was subsequently extended to stochastic settings in \cite{shi2025momentum, jin2022stochastic, lu2024variancereducedfirstordermethodsdeterministically}, which primarily address single-block problem structures. This type of block-separable structure is naturally amenable to the Alternating Direction Method of Multipliers (ADMM), a technique that decomposes complex problems into simpler subproblems. Originally developed for convex optimization \cite{gabay1976dual}, ADMM has well-established convergence guarantees and complexity analyses in the convex setting \cite{he20121,monteiro2013iteration}. Driven by its empirical success in nonconvex applications such as neural network training \cite{taylor2016training}, nonconvex ADMM and its associated complexity analysis have attracted significant research attention \cite{li2015global,hong2016convergence,Boct2019SIOPT,boct2020proximal}. Notably, \cite{huang2019faster} provided a pioneering analysis that established gradient (or sample) complexity bounds for nonconvex stochastic ADMM.
\begin{table*}[!htbp]
    \centering
    \caption{Comparison of optimization and convergence guarantee of deep hashing methods.}
    \renewcommand{\arraystretch}{1.15}
    \label{Table: summary of deep hashing algorithms}
    \resizebox{0.9\textwidth}{!}{
    \begin{tabular}{>{\centering\arraybackslash}p{1.3cm} c >{\centering\arraybackslash}p{3.3cm} c >{\centering\arraybackslash}p{1.8cm} c}
    \toprule
    \textbf{Type} & \textbf{Method} & \textbf{Model} & \textbf{Quantization Loss} & \textbf{Optimization} & \textbf{Complexity} \\
    \midrule
    \multirow{3}{*}{\centering Discrete} 
    & DDSH \cite{liDeepSupervisedHashing} & $\min L_s(\bb) + Q(\bu, \bb)$ & \centering $\|\cdot\|^2$ & BQP & \multirow{12}{*}{\centering $-$} \\
    \cmidrule(lr){2-5}
    & DSDH \cite{li2017deep}& \multirow{2}{*}{\centering $\min L_s(\bu) + Q(\bu, \bb)$} & \multirow{2}{*}{\centering $\|\cdot\|^2$} & $sgn$ update &  \\ 
    & DPSH \cite{liFeatureLearningBased2016} & &  & DCC &  \\ 
    \cmidrule(lr){1-5}
    \multirow{9}{*}{\centering Continuous}
    & DCH \cite{caoDeepCauchyHashing2018} &\multirow{4}{*}{\centering $\min L_s(\bu) + h(\bu)$} & $\log \left( 1 + \frac{\text{dist}_{\text{h}}(|\bu|, \mathbf{1})}{\gamma} \right)$ & \multirow{4}{*}{\centering SGDM} &  \\
    & DNNH \cite{laiSimultaneousFeatureLearning2015} & & $\mathcal{I}_{[-1,1]^d}(\bu)$ & &  \\
    & DSH \cite{liuDeepSupervisedHashing} & & $\| | \bu| - \mathbf{1}\|_1$ &  &  \\ 
    & DTSH \cite{wang2016DTSH} & & $\| | \bu| - \mathbf{1}\|_2^2$ &  &  \\ 
    & DHN \cite{zhuDeepHashingNetwork2016} & & $\sum_i\log(\cosh(|u_i|-1))$&  &  \\ 
    \cmidrule(lr){2-5}
    & MDSHC \cite{wang2023deep} & \scriptsize $\min L_c(\bu) + L_s (\bu)+ h(\bu)$ & $\| | \bu| - \mathbf{1}\|_1$ & RMSProp &  \\ 
    \cmidrule(lr){2-5}
    & HashNet \cite{Cao2017HashNetDL} & \multirow{2}{*}{\centering $\min L_s(\bu)$} & \multirow{2}{*}{\centering -} & SGDM &  \\     
    & OrthoHash \cite{hoeOneLossAll2021} & & & Adam &  \\
    \cmidrule(lr){2-6}
   & \textbf{DualHash-StoM} & \multirow{2}{*}{\small $ \min L_s(\bu) + Q(\bu,\bb) + h(\bb)$} & \multirow{2}{*}{$\| | \bb| - \mathbf{1}\|_1$} & \multirow{2}{*}{Sto Primal-Dual} & $\mathcal{O}(\varepsilon^{-4})$ \\
    & \textbf{DualHash-StoRM} & & & & $\mathcal{O}(\varepsilon^{-3})$ \\
    \bottomrule
    \end{tabular}
    }
    \vspace{-5pt}
    \parbox{0.9\textwidth}{\scriptsize
    \textbf{Notes}: The \textbf{Type} column indicates whether the method uses discrete or continuous optimization for quantization. The \textbf{Model} column shows the core optimization objective, where \(\bb\) denotes binary codes and \(\bu\) denotes continuous outputs. The \textbf{Quantization Loss} column specifies the loss (\(Q(\cdot, \cdot)\) or \(h(\cdot)\)) for quantization error, with `-' indicating no explicit loss. The \textbf{Optimization} column describes the training strategy, with discrete methods focusing on binary code updates. DCC stands for Discrete Cyclic Coordinate, while BQP denotes Binary Quadratic Programming. The \textbf{Complexity} column reports the convergence rate, with `-' indicating no such analysis.
    }
     \vspace{-5pt}
\end{table*}

\section{Preliminary}
\textbf{Notations.} We use $a$, $\ba$, and $\bA$ to denote scalars, vectors, and
matrices, respectively. Throughout this paper, $\bm{1}$ represents a vector of all
ones, and $i, j$ serve as indices. Let $\calX, \calB, \calY$ be finite-dimensional
real Hilbert spaces equipped with inner product $\langle \cdot, \cdot \rangle$ and
induced norm $\| \cdot \| := \sqrt{\langle \cdot, \cdot \rangle}$. Unless
specified otherwise, $\| \cdot \|$ denotes the Euclidean norm for vectors and the
Frobenius norm for matrices. We use $\calX^{*}, \calB^{*}, \calY^{*}$ to
represent the dual spaces. $|\cdot|$ denotes element-wise absolute value. For a
closed set $\calC \subseteq \calX$, the distance from a point $\bx \in \calX$ to
$\calC$ is defined as $\text{dist}(\bx, \mathcal{C}) := \min_{\by \in
\mathcal{C}}\|\bx - \by\|$.

The extended real-valued function $f: \mathbb{R}^{q}\rightarrow (-\infty, +\infty]$ is called \textit{proper} if its domain $\text{dom}(f) := \{\bx \in \calX\mid f(\bx) < +\infty\}$ is nonempty and $\inf\limits_{\bx \in \mathbb{R}^q}f > -\infty$; it is \textit{closed} if its epigraph $\text{epi}(f) := \left\{ (\bx, t) \in \mathbb{R}^{q+1}\mid f(\bx) \leq t \right\}$ is closed.  For a proper, closed, and convex function $f$, the \textit{proximal mapping} of $f$, denoted by $\operatorname{prox}_{\tau f}(\by)$, is defined as
\begin{align}
    \label{Def: the definition of proximal operator}\operatorname{prox}_{\tau f}(\by) := \argmin\limits_{\bx \in \operatorname{dom}(f)}\left\{ f(\bx) + \frac{1}{2\tau}\|\bx - \by\|^{2}\right\},
\end{align} where $\tau$ is a positive scalar parameter.

A function $f: \calX \rightarrow \RR$ is said to be $L_f$-smooth if  $\| \nabla f(\bx) - \nabla f(\bx') \| \leq L_f \| \bx - \bx'\|$ for all $\bx,\, \bx' \in \RR^n$. A mapping $\mathcal{M} : \mathcal{D} \to \RR^l $ is said to be $C$-Lipschitz continuous if $\|\mathcal{M}(\bx) - \mathcal{M}(\bx')\| \leq C\|\bx - \bx'\|$ for all $\bx, \bx' \in \mathcal{D}$. A well-known gradient descent lemma for a smooth function is that
\begin{align}
\label{Eq: the descent lemma}
    f(\bx) \leq f(\bx') + \langle \nabla f(\bx'), \bx - \bx' \rangle + \frac{L_f}{2} \| \bx - \bx'\|^2.
\end{align}
\section{Problem reformulation}
\label{Section: Problem Reformulation}
\subsection{W-type Regularized Deep Hashing Model}
Deep hashing methods aim to learn network parameters \(\bx \in \calX \subseteq \RR^{d_{\bx}}\) to generate discrete codes that preserve semantic similarity effectively in the Hamming space. 
A natural approach is to minimize a similarity-preserving loss function \( \frac{1}{n}\sum_{i=1}^n L_s(\text{sgn}(\bC_i(\bx)))\) where $n$ is the number of training samples, $\bC_i(\bx)$ represents the \(i\)-th network output and \(f_i\) is the similarity-preserving loss function (e.g., pairwise cross-entropy loss \cite{zhuDeepHashingNetwork2016}). However, the function \( sgn(\cdot) \) renders standard backpropagation infeasible due to its zero gradient for all nonzero inputs. This discreteness poses a central challenge in deep hashing.

A W-type regularization (e.g., $\lambda | |z| - 1 |$) has effectively mitigated this issue. This type of regularization encourages outputs to approach binary values \(\{-1, 1\}\) during training. Specifically, the model is trained to produce continuous outputs \(\bu_i = \bD_i(\bx)\) with W-type regularization, and binary codes are obtained post-training via \(\bb_i = \text{sgn}(\bu_i)\).  Consequently, we formulate the core optimization problem for W-type regularized deep hashing methods as follows,
\begin{align}
\label{Eq: the primal deep hashing problem}
\min_{\bx \in \calX} \quad \frac{1}{n} \sumn f_i (\bx) + \sumn h(\bD_i(\bx)), 
\end{align}where \(\frac{1}{n}\sumn f_i\) is the similarity-preserving loss, \(D_i\) is the \(i\)-th network output and \(h\) is a W-type regularization.
\subsection{A Two-Block Structured Reformulation}
We then employ variable splitting 
to reformulate problem \eqref{Eq: the primal deep hashing problem} equivalently as the constrained problem:
\begin{align} 
\label{Eq: constrained_dlh} 
\min_{\bx \in \mathcal{X}, \bB \in \mathcal{B}}&\quad \frac{1}{n} \sumn f_i (\bx) + \sum_{i=1}^n h(\bb_i) \notag \\ 
\text{s.t.} &\quad \bb_i = \bD_i(\bx), \; \forall i=1,\dots,n. 
\end{align} Combined with the quadratic penalty method \cite{nocedal1999numerical}, this transformation yields the following two-block finite-sum optimization problem:
\begin{align} 
\label{Prop: block-wise regularized problem} 
\nonumber&\min_{\bx \in \mathcal{X}, \bB \in \mathcal{B}} J(\bx, \bB) \\
&= \overbrace{\underbrace{\frac{1}{n}\sum_{i=1}^n f_i(\bx)}_{f(\bx)} + \underbrace{\frac{\gamma}{2n} \sum_{i=1}^n \| \bD_i(\bx) - \bb_i\|^2}_{P(\bx, \bB)}}^{F(\bx, \bB) = \frac{1}{n}\sumn F_i(\bx, \bb_i)} + \underbrace{\sum_{i=1}^n h(\bb_i)}_{h(\bB)}. \tag{P}
\end{align} Here, \( \bB = [\bb_1, \bb_2, \dots, \bb_n]\) and \( \gamma > 0 \) is the penalty parameter. This reformulation decomposes the original problem \eqref{Eq: the primal deep hashing problem} into simpler structural blocks, enabling us to exploit the problem structure better. Building upon this problem, we develop a stochastic primal-dual algorithm that effectively addresses the challenges above.

\section{Algorithm}
\label{Section: Methodology}
\subsection{DualHash: A Stochastic Primal-Dual Hashing Algorithm}
Inspired by PDHG \cite{chambolleFirstOrderPrimalDualAlgorithm2011} in convex optimization and its nonconvex extension PPDG \cite{guoPreconditionedPrimalDualGradient2023}, we consider the equivalent constrained formulation of  the problem \eqref{Prop: block-wise regularized problem}:
\begin{align} 
\min_{\bx \in \mathcal{X}, \bB \in \mathcal{B}, \by \in \mathcal{Y}} &\quad F(\bx, \bB) + h(\bY) \notag \\
\text{s.t. } &\quad \bY = \bB \nonumber ,
\end{align} whose corresponding dual problem is 
\begin{align}
\label{Prop: the dual problem}
\max_{\bLmbd \in \mathcal{Y}^*} \min_{\bx \in \mathcal{X}, \bB \in \mathcal{B}, \by \in \mathcal{Y}} &\quad F(\bx, \bB) + h(\bY) + \langle \bLmbd, \bB - \bY \rangle. \tag{D}
\end{align} 
Recalling the definition of the conjugate function \cite{Rockafellar1998VariationalA}:
\[
h^*(\bY) := \sup_{\bx \in \mathcal{Y}} \{ \langle \bY, \bx \rangle - h(\bx) \}, \quad \bY \in \mathcal{Y}^* \label{eq:fenchel_conjugate},
\] and observing that the minimization with respect to $\bY$ in the problem \eqref{Prop: the dual problem} can be performed independently, we then simplify the Lagrangian function in the problem \eqref{Prop: the dual problem} to 
\begin{align}
\label{Eq: simplified_lagrangian}
\calL(\bx, \bB, \bLmbd) := F(\bx, \bB) + \langle \bB, \bLmbd \rangle - h^*(\bLmbd).\tag{\(\calL\)}
\end{align}

Through this transformation, we replace $h(\bY)$ with $-h^*(\bLmbd)$ and can optimize \(h^*\) in the dual space. Since $h^*$ is always convex and l.s.c. \cite{Rockafellar1998VariationalA}, it means that the $\text{prox}_{h^*}(\cdot)$ is often much easier to compute (potentially having a closed-form solution) than that of $h$.  For $h(\bz) = \sum_{i=1}^n \lambda ||z_i| - 1|$, we derive its conjugate function as illustrated in \Cref{fig:w_type_regularization}:
\begin{align*}
&h^*(\bx) = \sum_{i=1}^n h^*(x_i),\,\, \notag \\
& \text{where } h^*(x_i) = \begin{cases}
    |x_i|, & \text{if } x_i \in [-\lambda, \lambda],\\
    +\infty, & \text{otherwise},
    \end{cases}
\end{align*} whose proximal operator  in the piecewise solution is 
\begin{align}
    \label{Eq: the proximal mapping of conjugate function}
    [\operatorname{prox}_{\tau h^*}(\by)]_i = 
    \begin{cases}
        \lambda, & y_i > \lambda +\tau,\\
        y_i -\tau, & \tau < y_i \leq \lambda +\tau, \\
        0, & -\tau \leq y_i \leq\tau, \\
        y_i +\tau, &  - \lambda -\tau \leq y_i < -\tau, \\
        -\lambda, & y_i < -\lambda -\tau.
    \end{cases}
\end{align} This motivates us to perform a proximal step to update \(\bLmbd\).
\begin{figure}[!t]
\centering
\includegraphics[width=\columnwidth]{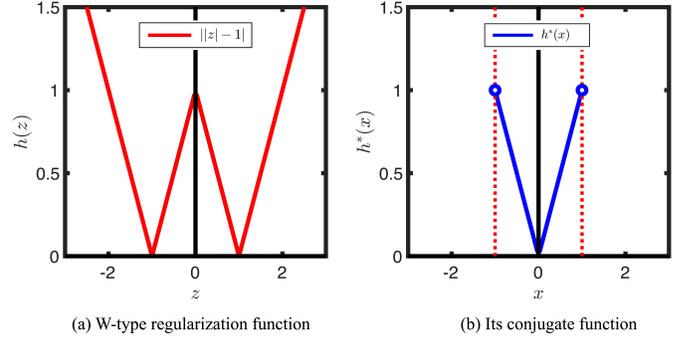}
\caption{W-type regularization function $h(z) = \lambda ||z| - 1|$ and its conjugate function $h^*(x)$.}
\label{fig:w_type_regularization}
\vspace{-1ex}
\end{figure} We now outline \textbf{DualHash}, which employs an alternating update scheme for $\bx, \bB, \bm{\Lambda}$. 
we adopt a stochastic first-order method for the $\bx$-update since computing the full gradient \( \nabla_{\bx} F(\bx, \bB) \) is computationally expensive for typically the large number of samples $n$ in deep learning. Specifically, at each iteration $k$, we uniformly sample a mini-batch \(\calJ_k \subseteq \{1,\cdots,n\}\) with \(b_k = |\calJ_k| \leq n\) and compute:
\begin{align}
\label{Eq: the mini batch stochastic gradient estimator of F at x}
\nabla_{\bx} F(\bx^k, \bB^k; \calJ_k)
=\, \frac{1}{|\mathcal{J}_k|}\sum_{j \in \mathcal{J}_k} \nabla_{\bx} F_j(\bx^k, \bb_j^k).
\end{align}
Then we utilize this approximation within an optimization strategy chosen to enhance efficiency (e.g., incorporating momentum or variance reduction), as detailed in \Cref{Sec: Algorithm implementations}.
Second, the auxiliary variable $\bB$ is updated via a simple gradient descent step \eqref{Eq: the update of B}; this suffices because the objective \eqref{Eq: simplified_lagrangian} with respect to $\bB$ is smooth with an inexpensive gradient, 
    \begin{align}
   \nonumber \bB^{k+1} &= \bB^k - \tau \nabla_{\bB}\calL(\bx^{k+1}, \bB^k, \bLmbd^k)\\
    &= \bB^k - \tau \left( \nabla_{\bB} F(\bx^{k+1}, \bB^k) + \bLmbd^k \right). \label{Eq: the update of B}
    \end{align}
    Finally, the dual variable $\bLmbd$ is updated via proximal gradient ascent on $h^*$, using an extrapolation step introduced in \cite{chambolleFirstOrderPrimalDualAlgorithm2011}. The update solves the following optimization problem:
\begin{align}
    \bLmbd^{k+1} &= \text{prox}_{(1/\tau h^*)} \left( \bLmbd^k + \left(2\bB^{k+1} - \bB^k  \right) \right) \notag \\
    &=  \arg\min_{\bLmbd \in \mathcal{Y}^* } \left\{ h^*(\bLmbd) - \langle \bLmbd, 2\bB^{k+1} - \bB^k \rangle \right. \notag \\
    &\quad\quad\quad\quad\quad\quad\quad\quad\quad\quad\left. + \frac{\tau}{2} \| \bLmbd - \bLmbd^k\|^2 \right\}. 
    \label{Eq: the update of lambda}
\end{align}
\subsection{Implementations}
\label{Sec: Algorithm implementations}
The unified DualHash framework allows for flexible choices in the $\bx$ update step based on a mini-batch stochastic gradient approximation \eqref{Eq: the mini batch stochastic gradient estimator of F at x}. This flexibility enables us to incorporate advanced stochastic optimization techniques. We thus instantiate the $\bx$-update using two well-established approaches: \textbf{momentum} \cite{Nesterov1983AMF, Polyak1964} and \textbf{variance reduction} \cite{driggs2022biased}, which can improve empirical training efficiency and can also improve the theoretical convergence guarantees \cite{Liu2020AnIA,sutskever2013importance}.

\noindent\textbf{DualHash-StoM}
The heavy ball method \cite{polyakMethodsSpeedingConvergence1964} and Nesterov acceleration \cite{Nesterov1983AMF} can be integrated with stochastic gradient descent (SGD) to yield accelerated variants like SGD with momentum, which significantly speeds up convergence.
This approach has become a popular strategy for training neural networks \cite{fotopoulos2024review,Hertrich2020InertialSP}. We thus incorporate this technique and compute the stochastic gradient estimator (Line 3 of \Cref{alg:our_algorithm_sgdm}) as:
\begin{align}
\label{Eq: the stochastic approximate in SGDM}
    \bG^k = \nabla_{\bx} F(\bz^k, \bB^k; \calJ_k),\, \text{where } \bz^k = \bx^k + \beta_k (\bx^k - \bx^{k-1}).
\end{align} The update of \( \bx^k\) (Line 4 of Alg. \ref{alg:our_algorithm_sgdm}) is then:
\begin{align}
\label{Eq: the update of x in SGDM}
    \bx^{k+1} = \by^k - \eta_k \bG^k,\,\text{where } \by^k = \bx^k + \alpha_k (\bx^k - \bx^{k-1}). 
\end{align} The overall algorithm, DualHash-StoM, is detailed in \Cref{alg:our_algorithm_sgdm}.
\begin{remark}
The momentum acceleration framework in \eqref{Eq: the update of x in SGDM} covers these two accelerations:
Nesterov acceleration \cite{Nesterov1983AMF} corresponds to $\alpha_k = \beta_k \neq 0$; the heavy-ball method \cite{polyakMethodsSpeedingConvergence1964} is obtained with $\beta_k = 0$; vanilla SGD is recovered when $\alpha_k = \beta_k = 0$; In this paper, we set $\alpha_k, \beta_k \in (0,1)$ for all $k \geq 1$. 
\end{remark}

\noindent\textbf{DualHash-StoRM}
Alternatively, we employ the STORM estimator \cite{cutkoskyMomentumBasedVarianceReduction2019} for the $\bx$-update. STORM utilizes a recursive momentum mechanism to reduce gradient variance, which is motivated by SGDM. Unlike some other variance reduction techniques (e.g., SVRG \cite{johnsonAcceleratingStochasticGradient2013}, SARAH \cite{SARAH2017}) that require full-gradient or large-batch gradient computations, STORM achieves variance reduction without such requirements. Given its practical effectiveness in deep learning \cite{cutkoskyMomentumBasedVarianceReduction2019,levy2021storm+}, we compute the stochastic gradient estimator (Line 3 of \Cref{alg:our_algorithm_storm}) as:
{\small
\begin{align}
\label{Eq: the stochastic approximate in STORM}
\bD^k =
\begin{cases} 
\nabla_{\bx} F(\bx^1, \bB^1; \mathcal{J}_1), &\text{if }  k = 1, \\
(1 - \rho_{k-1}) \big( \bD^{k-1} - \nabla_{\bx} F(\bx^{k-1}, \bB^{k-1};\mathcal{J}_{k-1})\big)\\
+\nabla_{\bx} F(\bx^k, \bB^k; \mathcal{J}_k), & \text{if } k > 1.
\end{cases}
\end{align}}
The update of $\bx$ (Line 4 of \Cref{alg:our_algorithm_storm}) is then 
\begin{align}
\label{Eq: the update of x in STORM}
    \bx^{k+1} = \bx^k - \eta_k \bD^k.
\end{align} The overall algorithm, DualHash-StoRM, is detailed in \Cref{alg:our_algorithm_storm}.

\begin{figure}[!htbp]
    \vspace{-5mm} 
    \begin{minipage}[t]{0.485\textwidth}
        \begin{algorithm}[H]
       \caption{DualHash-StoM}
        \small   
        \setlength{\baselineskip}{0.8\baselineskip}  
      \label{alg:our_algorithm_sgdm}
      \begin{algorithmic}[1]
        \STATE \textbf{Initialize:} $\bx^1 = \bz^1 = \by^1 \in \calX $, \( \bB^1 \in \calB,\, \bLmbd^1\in \calY^*\), parameters $\{\eta_k\} $, $\tau$, $b_k$, $\{\alpha_k, \beta_k \in (0,1)\}$
        \FOR{$k = 1, 2, \ldots$}               
          \STATE Compute \(\bG^k\) by \eqref{Eq: the stochastic approximate in SGDM}
          \STATE Update $\bx^{k+1}$ by \eqref{Eq: the update of x in SGDM}
          \STATE Update $\bB^{k+1}$ by \eqref{Eq: the update of B}
          \STATE Update $\bm{\Lambda}^{k+1}$ by \eqref{Eq: the update of lambda}
        \ENDFOR
        \STATE \textbf{Return:} $\bx_R$ where $R$ is uniformly sampled from $\{2, \ldots, T+1\}$
      \end{algorithmic}
    \end{algorithm}
  \end{minipage}
  \hspace{0.5mm} 
  \begin{minipage}[t]{0.485\textwidth}
    \begin{algorithm}[H]
        \caption{ DualHash-StoRM}
      \small
      \setlength{\baselineskip}{0.8\baselineskip}
      \label{alg:our_algorithm_storm}
      \begin{algorithmic}[1]
        \STATE  \textbf{Initialize:} $\bx^1  \in \calX$, \( \bB^1 \in \calB,\, \bLmbd^1\in \calY^*\), parameters $\{\eta_k\} $, $\tau $, $b_k$, $\{\rho_k\in (0,1)\}$
        \FOR{$k = 1, 2, \ldots$}
        \STATE Compute $\bD^k$ by \eqref{Eq: the stochastic approximate in STORM}
        \STATE Update $\bx^{k+1}$ by \eqref{Eq: the update of x in STORM}.
        \STATE Update $\bB^{k+1}$ by \eqref{Eq: the update of B}.
        \STATE Update $\bm{\Lambda}^{k+1}$ by \eqref{Eq: the update of lambda}.
        \ENDFOR
        \STATE \textbf{Return:} $\bx_R$ where $R$ is uniformly sampled from $\{2, \ldots, T+1\}$
      \end{algorithmic}
    \end{algorithm}
  \end{minipage}
  \vspace{-1ex}
\end{figure}
\begin{remark}
The update steps for $\bB$ and $\bm{\Lambda}$ imply the following optimality conditions, which may result in explicit closed-solution in deep hashing. The $\bB$-update in \eqref{Eq: the update of B} yields:
\begin{align}
    \label{Eq: the optimality condition of Bk}
    \bB^{k} = \bB^{k-1} - \tau \left( \nabla_{\bB}F(\bx^{k}, \bB^{k-1}) + \bm{\Lambda}^{k-1} \right).
\end{align}
For the $\bm{\Lambda}$-update in \eqref{Eq: the update of lambda}, the proximal operator definition gives:
\[
    \tau \left( \bm{\Lambda}^{k} + \tau^{-1}(2\bB^{k} - \bB^{k-1}) - \bm{\Lambda}^{k} \right) \in \partial h^{*}(\bm{\Lambda}^{k}),
\]
which is equivalent to the existence of $\mathcal{G}_{\bm{\Lambda}}^{k} \in \partial h^{*}(\bm{\Lambda}^{k})$ such that:
\begin{align}
    \label{Eq: the optimality condition of lbda}
    \mathcal{G}_{\bm{\Lambda}}^{k} = (2\bB^{k} - \bB^{k-1}) + \tau(\bm{\Lambda}^{k-1} - \bm{\Lambda}^{k}).
\end{align}
If the sequence \(\{ (\bx^k, \bB^k, \bm{\Lambda}^k) \}\) converges to \((\bx^*, \bB^*, \bm{\Lambda}^*)\), then by the outer semicontinuity of $\partial h^{*}(\cdot)$, we have $\bB^* \in \partial h^*(\bm{\Lambda}^*)$.
This establishes \eqref{Eq: the sufficient and necessary condition for dual}, which is precisely the necessary condition for the second relation in \eqref{Eq: the sufficient and necessary condition}.
\end{remark}
\section{Complexity analysis}
\label{Section: Convergence analysis}

We now investigate the complexity analysis for DualHash-StoM (\Cref{alg:our_algorithm_sgdm}) and DualHash-StoRM (\Cref{alg:our_algorithm_storm}). Our objective is to find an approximate stochastic critical point of the Lagrangian function \eqref{Eq: simplified_lagrangian}, i.e., a point satisfying 
\begin{equation}\label{equ:esp}
\mathbb{E} [\text{dist}^2(\bm{0}, \partial \calL (\bx^*, \bm{B}^*, \bm{\Lambda}^* ))] \leq \varepsilon^2
\end{equation}
for a given \(\varepsilon > 0\). We assume in this paper the set \(
\text{crit} \calL := \{(\bx^*, \bm{B}^*, \bm{\Lambda}^* ) \in \mathcal{X} \times \mathcal{B} \times \mathcal{Y^*}: \bm{0} \in \partial \mathcal{L}(\bx^*, \bm{B}^*, \bm{\Lambda}^*)\}
\) is nonempty. This focus is standard in non-convex stochastic optimization analysis, as finding a global or even local minimizer for non-convex optimization problems is generally NP-hard. In particular, we can prove that a critical point of the Lagrangian function \eqref{Eq: simplified_lagrangian} is a first-order necessary optimal condition of problem \eqref{Prop: block-wise regularized problem} as follows. 

\begin{lemma}
\label{Lemma: Optimality condition}
    Suppose that \(\min\limits_{\bx, \bB} \calL(\bx, \bB, \bLmbd) > -\infty\)  and \((\bx^*, \bB^*, \bLmbd^*)\) are the optimal solutions of the problem \eqref{Prop: block-wise regularized problem} and the problem \eqref{Prop: the dual problem}, respectively. Then, it holds that \((\bx^*, \bB^*, \bLmbd^*) \in \text{crit} \calL\), i.e., \(\bm{0} \in \partial \mathcal{L}(\bx^*, \bB^*, \bLmbd^*)\).
\end{lemma}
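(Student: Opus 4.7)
The plan is to verify $\mathbf{0}\in\partial\calL(\bx^*,\bB^*,\bLmbd^*)$ block by block. Since $F$ is smooth and the coupling $\langle \bB,\bLmbd\rangle$ is bilinear, the partial subdifferentials collapse to
\begin{align*}
\partial_{\bx}\calL(\bx^*,\bB^*,\bLmbd^*) &= \{\nabla_{\bx}F(\bx^*,\bB^*)\},\\
\partial_{\bB}\calL(\bx^*,\bB^*,\bLmbd^*) &= \{\nabla_{\bB}F(\bx^*,\bB^*)+\bLmbd^*\},\\
\partial_{\bLmbd}\calL(\bx^*,\bB^*,\bLmbd^*) &= \bB^*-\partial h^*(\bLmbd^*),
\end{align*}
so the task reduces to establishing the three conditions $\nabla_{\bx}F(\bx^*,\bB^*)=\mathbf{0}$, $\nabla_{\bB}F(\bx^*,\bB^*)+\bLmbd^*=\mathbf{0}$, and $\bB^*\in\partial h^*(\bLmbd^*)$.

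First, I would lift (P) back into the split form $\min_{\bx,\bB,\bY}F(\bx,\bB)+h(\bY)$ s.t. $\bY=\bB$, with ordinary Lagrangian $L(\bx,\bB,\bY,\bLmbd)=F(\bx,\bB)+h(\bY)+\langle\bLmbd,\bB-\bY\rangle$. Minimizing $L$ in $\bY$ alone produces $\inf_{\bY}L=\calL$ by the very definition of $h^*$, so the dual function of (D) equals $\Phi(\bLmbd)=\inf_{\bx,\bB}\calL(\bx,\bB,\bLmbd)$, and the assumption $\min_{\bx,\bB}\calL>-\infty$ says that $\Phi$ is proper at $\bLmbd^*$. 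Combining weak duality $\Phi(\bLmbd^*)\le J(\bx^*,\bB^*)$ with the fact that the choice $(\bx^*,\bB^*,\bY=\bB^*)$ makes $L$ equal to $J(\bx^*,\bB^*)$, and then using the assumed optimality of $\bLmbd^*$ on (D) together with the optimality of $(\bx^*,\bB^*)$ on (P), the chain of inequalities collapses to an equality, showing that $(\bx^*,\bB^*,\bY^*=\bB^*)$ attains the inner minimum of $L(\cdot,\cdot,\cdot,\bLmbd^*)$.

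Applying Fermat's rule blockwise to that inner minimization then immediately delivers the $\bx$- and $\bB$-block conditions
\[
\nabla_{\bx}F(\bx^*,\bB^*)=\mathbf{0},\qquad \nabla_{\bB}F(\bx^*,\bB^*)+\bLmbd^*=\mathbf{0},
\]
while the $\bY$-block, where the nonsmooth regularizer lives in isolation, gives $\bLmbd^*\in\partial h(\bB^*)$. From the Fenchel--Young inequality $h(\bz)+h^*(\bw)\ge\langle \bz,\bw\rangle$, the relation $\bLmbd^*\in\partial h(\bB^*)$ is equivalent to the equality $h(\bB^*)+h^*(\bLmbd^*)=\langle\bB^*,\bLmbd^*\rangle$, and this equality implies $\bB^*\in\partial h^*(\bLmbd^*)$ for any proper $h$, regardless of convexity. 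Substituting the three pieces back yields $\mathbf{0}\in\partial\calL(\bx^*,\bB^*,\bLmbd^*)$.

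The main obstacle is the second step: extracting the saddle-point identity at $\bLmbd^*$ despite the nonconvexity of $F$ and $h$. What makes it go through is that primal feasibility of the lifted problem is the linear equality $\bY=\bB$ and that this constraint is exactly inactive at the candidate $(\bx^*,\bB^*,\bB^*)$, so the Lagrangian duality gap vanishes at the given optimal pair without having to invoke global strong duality. The Fenchel--Young step is benign because only the direction $\bLmbd^*\in\partial h(\bB^*)\Rightarrow\bB^*\in\partial h^*(\bLmbd^*)$ is required, which holds for any proper function and avoids the need for convexity of the W-type regularizer.
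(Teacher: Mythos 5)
Your overall plan mirrors the paper's: both arguments reduce the claim to (a) the saddle-point property that $(\bx^*,\bB^*)$ minimizes $\calL(\cdot,\cdot,\bLmbd^*)$, which via Fermat's rule yields the two gradient conditions \eqref{Eq: the sufficient and necessary condition for primal}, and (b) the Fenchel--Young equality $h(\bB^*)+h^*(\bLmbd^*)=\langle\bLmbd^*,\bB^*\rangle$, which yields $\bB^*\in\partial h^*(\bLmbd^*)$. Your handling of (b) is fine --- indeed the implication from the equality to $\bB^*\in\partial h^*(\bLmbd^*)$ only uses convexity of $h^*$, not of $h$, and is cleanest if you read it off directly from global minimality in $\bY$ rather than passing through the limiting subdifferential $\partial h(\bB^*)$ (for nonconvex $h$ the limiting subdifferential is a local object and is \emph{not} equivalent to the Fenchel--Young equality, so phrase that step as ``global minimality in $\bY$ means the supremum defining $h^*(\bLmbd^*)$ is attained at $\bB^*$'').

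The genuine gap is in how you obtain (a). Your chain is $\Phi(\bLmbd^*)=\inf_{\bx,\bB,\bY}L(\bx,\bB,\bY,\bLmbd^*)\le L(\bx^*,\bB^*,\bB^*,\bLmbd^*)=J(\bx^*,\bB^*)$, and you need this to be an equality in order to conclude that $(\bx^*,\bB^*,\bB^*)$ attains the inner infimum. But the separate optimality of $(\bx^*,\bB^*)$ for \eqref{Prop: block-wise regularized problem} and of $\bLmbd^*$ for \eqref{Prop: the dual problem} only gives $\Phi(\bLmbd^*)=\max_{\bLmbd}\Phi(\bLmbd)\le \min J = J(\bx^*,\bB^*)$; turning this into an equality is exactly the assertion that there is no duality gap, which fails in general when $F$ and $h$ are nonconvex. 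Your justification --- that the constraint $\bY=\bB$ is ``exactly inactive'' at the candidate point --- does not repair this: the candidate being primal-feasible is true for every primal optimum of every problem and never closes a duality gap; the gap is governed by the (non)convexity of the perturbation function, not by properties of a single feasible point. The paper avoids constructing this step by invoking the cited duality theorem \cite[Theorem 2.158]{temporary-citekey-11039}, whose ``if and only if'' characterization already bundles the zero-gap condition into the hypothesis of joint primal--dual optimality. To make your argument complete you would either have to add val(P)$=$val(D) as an explicit hypothesis (or cite a theorem supplying it), after which the rest of your proof goes through.
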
 
\setcounter{proof}{0}
\begin{proof}
    According to \cite[Theorem 2.158]{temporary-citekey-11039},
    $(\bx^{*}, \bm{B}^{*}, \bm{\Lambda}^{*})$ are optimal solutions to the
    primal problem \eqref{Prop: block-wise regularized problem} and the dual problem
    \eqref{Prop: the dual problem} if and only if:
    \begin{align}
        \label{Eq: the sufficient and necessary condition}
        \begin{cases}(\bx^{*}, \bm{B}^{*}) \in \argmin\limits_{\bx \in \calX, \bm{B} \in \mathcal{B}}\mathcal{L}(\bx^{*}, \bm{B}^{*}, \bm{\Lambda}^{*}), \\ 0 = h(\bm{B}^{*}) + h^{*}(\bm{\Lambda}^{*}) - \langle \bm{\Lambda}^{*}, \bm{B}^{*}\rangle.\end{cases}
    \end{align}

    The first condition in \eqref{Eq: the sufficient and necessary condition} implies
    that $\bm{0}$ belongs to the subdifferential of $\mathcal{L}$ with respect to
    $(\bx, \bm{B})$ at $(\bx^{*}, \bm{B}^{*}, \bm{\Lambda}^{*})$. Since $\calL$
    is continuously differentiable in $(\bx, \bm{B})$, we have
    \begin{align}
        \label{Eq: the sufficient and necessary condition for primal}\begin{cases}\bm{0}&= \nabla f(\bx^{*}) + \nabla_{\bx}G(\bx^{*}, \bm{B}^{*}),\\ \bm{0}&= \nabla_{\bB}G(\bx^{*}, \bm{B}^{*}) + \bm{\Lambda}^{*}.\end{cases}
    \end{align}
    For the second condition in \eqref{Eq: the sufficient and necessary condition},
    by the definition of the conjugate function, if this equality holds, then $\bm
    {\Lambda}^{*}$ is the optimal solution of $\sup \{ \left<\bb^{*}, \blmbd\right
    > - h^{*}(\blmbd) \}$, i.e.,
    \begin{align}
        \label{Eq: the sufficient and necessary condition for dual}\bm{B}^{*}\in \partial h^{*}(\bm{\Lambda}^{*}).
    \end{align}
    Combining \eqref{Eq: the sufficient and necessary condition for primal} and \eqref{Eq: the sufficient and necessary condition for dual},
    we obtain $\bm{0}\in \partial \mathcal{L}(\bx^{*}, \bm{B}^{*}, \bm{\bLmbd}^{*}
    )$, completing the proof. \eproof
\end{proof}

We then make the following assumptions generally used in stochastic optimization \cite{Ghadimi2013MinibatchSA,huangMiniBatchStochasticADMMs2019,xu2023momentum}.
\begin{assumption} 
\label{Ass: main assumptions}
\begin{itemize}[leftmargin=2em]
    \item[{(i)}] We assume that the sequences \( \{(\bx^k, \bm{B}^k, \bm{\Lambda}^k) \}_{k \in \NN} \) generated by Algorithms \ref{alg:our_algorithm_sgdm} and \ref{alg:our_algorithm_storm} are bounded. Moreover, the objective of  \eqref{Prop: block-wise regularized problem} is lower bounded by \(J^*\).
    \item[{(ii)}] We assume that \(F(\bx, \bB)\) is $L_F$-smooth with respect to $(\bx, \bB)$, where $L_F = \mathcal{O}\left(1\right)$.
    \item[{(iii)}] Suppose index $i \sim \text{Uniform}\{1, \ldots, n\}$.  For any \(\bB \in \calB\), there exists a constant $\sigma^2 \geq 0$ such that $\mathbb{E}_{i}[\nabla_{\bx} F_i(\bx, \bb_i)] = \nabla_{\bx} F(\bx, \bB)$ and $\mathbb{E}_{i}[\|\nabla_{\bx} F_i(\bx, \bb_i) - \nabla_{\bx} F(\bx, \bB)\|^2] \leq \sigma^2$. 
\end{itemize}
\end{assumption}
\begin{remark}
\begin{enumerate}[label=(\roman*), leftmargin=2em]
    \item The boundedness in \Cref{Ass: main assumptions} (i) is maintained in practice for each variable:  $\bm{\Lambda}^k$ remains bounded due to its proximal update rule \eqref{Eq: the update of lambda} for $\bm{\Lambda}^k$ with the truncation property of $\text{prox}_{h^*}$ in \eqref{Eq: the proximal mapping of conjugate function}; The $\bB^k$ naturally converges toward $\pm 1$ through W-type regularization; and network parameters $\bx^k$ are stabilized via weight decay and pretrained initialization, ensuring objective coercivity.
    \item The smoothness constant \(L_F\) remains \(\mathcal{O}(1)\), as the penalty formulation is designed to cancel the potential \(\mathcal{O}(\sqrt{n})\) scaling of individual components. A rigorous analysis is provided in Appendix C.
\end{enumerate}
\end{remark}
Our analysis aims to bound \( \EE [\| \partial \calL\|^2]\). However, due to the existence of dual variables \(\bLmbd\), the Lagrangian function \eqref{Prop: the dual problem} lacks the descent property. This makes it difficult to apply standard approaches from prior work \cite{tran2022hybrid,wang2019spiderboost,xu2023momentum,zhang2022stochastic}, which often leverage the descent of their objective functions or corresponding potential functions. Instead, we shall construct an appropriate Lyapunov function. To enable this construction, we first analyze how to control the dual variable dynamics by establishing bounds on \(\|\bm{\Lambda}^{k+1} - \bm{\Lambda}^k \|^2\) with the primal variables residuals.
\begin{lemma}
\label{Lemma: dual boundedness}
Under \Cref{Ass: main assumptions}, for \(k \geq 1 \), it holds that
    \begin{align}
       \nonumber
        &\|\bm{\Lambda}^{k+1}- \bm{\Lambda}^k \|^2 \\
        \leq&\, 3 \tau^{-2} \|\bB^{k+2} - \bB^{k+1}\|^2 + 3(\tau^{-1} + L_F)^2  \|\bB^{k+1} - \bB^{k}\|^2   \notag \\
        &+ 3L_F^2  \|\bx^{k+2} - \bx^{k+1}\|^2.
         \label{Eq: dual boundedness}
    \end{align}
\end{lemma}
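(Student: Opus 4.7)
The plan is to derive the bound directly from the optimality conditions of the $\bB$-update. From equation \eqref{Eq: the optimality condition of Bk}, rearranging gives
\[
\bm{\Lambda}^k = -\tau^{-1}(\bB^{k+1} - \bB^k) - \nabla_{\bB} F(\bx^{k+1}, \bB^k),
\]
and applying the same relation at the next iteration yields the analogous expression for $\bm{\Lambda}^{k+1}$ involving $\bB^{k+2}, \bB^{k+1}$, and $\nabla_{\bB}F(\bx^{k+2}, \bB^{k+1})$. Subtracting the two identities produces a clean decomposition of the dual increment $\bm{\Lambda}^{k+1} - \bm{\Lambda}^k$ that depends only on consecutive primal differences and a single gradient difference.

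The second step is to group these terms in a way that produces exactly the coefficients stated in the lemma. I would split the difference into three blocks: (a) the pure $\tau^{-1}$ term in $\bB^{k+2} - \bB^{k+1}$, (b) a combined block containing $\tau^{-1}(\bB^{k+1} - \bB^k)$ together with $\nabla_{\bB}F(\bx^{k+1}, \bB^{k+1}) - \nabla_{\bB}F(\bx^{k+1}, \bB^k)$, and (c) the cross-gradient difference $\nabla_{\bB}F(\bx^{k+2}, \bB^{k+1}) - \nabla_{\bB}F(\bx^{k+1}, \bB^{k+1})$. This regrouping uses the identity
\[
\nabla_{\bB}F(\bx^{k+2}, \bB^{k+1}) - \nabla_{\bB}F(\bx^{k+1}, \bB^k) = \bigl[\nabla_{\bB}F(\bx^{k+2}, \bB^{k+1}) - \nabla_{\bB}F(\bx^{k+1}, \bB^{k+1})\bigr] + \bigl[\nabla_{\bB}F(\bx^{k+1}, \bB^{k+1}) - \nabla_{\bB}F(\bx^{k+1}, \bB^k)\bigr],
\]
which allows each gradient piece to be bounded by a pure $\bx$-difference or a pure $\bB$-difference via $L_F$-smoothness (\Cref{Ass: main assumptions}(ii)).

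The final step applies the elementary inequality $\|u_1 + u_2 + u_3\|^2 \le 3(\|u_1\|^2 + \|u_2\|^2 + \|u_3\|^2)$ to the three-block decomposition. Block (a) contributes $3\tau^{-2}\|\bB^{k+2} - \bB^{k+1}\|^2$; block (b), bounded by $(\tau^{-1}+L_F)\|\bB^{k+1}-\bB^k\|$ via the triangle inequality together with smoothness, contributes $3(\tau^{-1}+L_F)^2\|\bB^{k+1}-\bB^k\|^2$; and block (c), bounded by $L_F\|\bx^{k+2}-\bx^{k+1}\|$, contributes $3L_F^2\|\bx^{k+2}-\bx^{k+1}\|^2$, giving exactly \eqref{Eq: dual boundedness}.

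The only subtle point, and what I would flag as the main obstacle, is recognizing the correct grouping that yields $(\tau^{-1}+L_F)^2$ rather than $\tau^{-2}+L_F^2$ on the middle term: one must fuse the $\tau^{-1}(\bB^{k+1}-\bB^k)$ increment with the $\bB$-partial gradient difference before applying the triangle inequality, instead of separating them and using a four-term split. Once that grouping is identified, the rest is a routine application of smoothness and Young's inequality, and no further assumptions beyond \Cref{Ass: main assumptions}(ii) are needed.
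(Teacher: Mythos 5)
Your proposal is correct and follows essentially the same route as the paper: both derive $\bm{\Lambda}^{k}$ and $\bm{\Lambda}^{k+1}$ from the $\bB$-update, subtract, split the gradient difference into a pure-$\bx$ and a pure-$\bB$ piece via $L_F$-smoothness, fuse the $\bB$-gradient piece with $\tau^{-1}(\bB^{k+1}-\bB^{k})$ to get the $(\tau^{-1}+L_F)$ coefficient, and finish with $(a+b+c)^2 \leq 3(a^2+b^2+c^2)$. The ``subtle point'' you flag about the grouping is exactly the grouping the paper uses.
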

\begin{proof}
    Using the update of $\bm{B}^{k}$ in \eqref{Eq: the update of B} twice yields
    that
\begin{align*}
     & \|\bm{\Lambda}^{k+1}- \bm{\Lambda}^{k}\| \notag    \\
        = &\,\left\| \tau^{-1}(\bm{B}^{k+1}- \bm{B}^{k+2}) + \tau^{-1}(\bm{B}^{k+1}- \bm{B}^{k}) \right. \\ 
        & \left. \quad + \nabla_{\bB}F(\bx^{k+1}, \bm{B}^{k}) - \nabla_{\bB}F(\bx^{k+2}, \bm{B}^{k+1}) \right\| \\
       \leq & \,\tau^{-1}\|(\bm{B}^{k+1}- \bm{B}^{k+2})\| + (\tau^{-1}+ L_{F}) \|\bm{B}^{k+1}- \bm{B}^{k}\| \\
       &+\, L_{F}\|\bx^{k+2}- \bx^{k+1}\|,
\end{align*}
    where the second inequality uses the smoothness of $F(\bx, \bm{B})$ in \Cref{Ass: main assumptions}. Therefore,
    using the fact $(a + b + c)^{2}\leq 3(a^{2}+ b^{2}+c^{2})$, one has
    \begin{align}
         &\|\bm{\Lambda}^{k+1}- \bm{\Lambda}^{k}\|^{2}  \notag \\
         \leq&\,3\tau^{-2}\|\bm{B}^{k+2}- \bm{B}^{k+1}\|^{2}+ 3(\tau^{-1}+ L_{F})^{2}\|\bm{B}^{k+1}- \bm{B}^{k}\|^{2} \notag \\
         &+ 3L_{F}^{2}\|\bx^{k+2}- \bx^{k+1}\|^{2},\label{Eq: dual boundedness in proof}
    \end{align}and completes the proof. \eproof
\end{proof}
Next, we analyze the evolution of the Lagrangian function \(\calL\) during the updates of both primal and dual variables. Although the updates of \(\bB\) and \(\bLmbd\) are shared between Algorithms \ref{alg:our_algorithm_sgdm} and \ref{alg:our_algorithm_storm}, the stochastic updates of \(\bx^k\) introduce additional analytical challenges. As indicated by \eqref{Eq: dual boundedness}, the coupling between variables implies that randomness in the \(\bx\)-updates propagates to the updates of \(\bB\) and \(\bLmbd\). To address these challenges, we carefully design the parameter settings for different stochastic estimators to control the variance throughout the optimization process. This leads us to construct the following algorithm-specific Lyapunov functions:
\[
\Psi_{\text{sgdm}}^k = \calL(\bx^{k}, \bm{B}^{k}, \bLmbd^{k}) - C_{1}\Delta_{\bB}^{k+1} + C_{2}\Delta_{\bB}^{k} - C_{3}\Delta_{\bm{x}}^{k+1} + C_{4}\Delta_{\bm{x}}^{k}
\]
and
\[
\Psi_{\text{storm}}^k = \calL(\bx^{k}, \bm{B}^{k}, \bLmbd^{k}) - C_{1}\Delta_{\bB}^{k+1} + C_{2}\Delta_{\bB}^{k} - C_{3}\Delta_{\bm{x}}^{k+1},
\]
where 
\[
\Delta_{\bx}^{k} = \frac{\|\bx^{k} - \bx^{k-1}\|^{2}}{2}, \quad \Delta_{\bm{B}}^{k} = \frac{\|\bm{B}^{k} - \bm{B}^{k-1}\|^{2}_{F}}{2},
\]
and \(C_1, \dots, C_4\) are positive constants specified in the appendix.

\subsection{Oracle Complexity Results}
We now present the complexity results of the two algorithms.\footnote{Due to space constraints, necessary lemmas with detailed proofs, as well as proofs of theorems and parameter derivations, are provided in Supplementary Material B.}

\noindent\textbf{Oracle complexity of \Cref{alg:our_algorithm_sgdm}:}
We assume the parameters \(\{\eta_k \}\), \(\{\alpha_k, \beta_k\}\), \( \tau \) and \( b_k\) in \Cref{alg:our_algorithm_sgdm} 
are set as follows:
\begin{align}
\label{Eq: the parameters conditions of dualhash-sgdm}
\eta_k = \frac{\eta}{L_F},\quad \alpha_k = \alpha, \quad \beta_k = \beta,\quad \tau \leq \frac{\sqrt{\tilde{\delta}}}{L_F}, \quad b_k = b = c_bT ,
\end{align}where \(\eta,\, \alpha, \, \beta,\, \tau,\,\tilde{\delta},\, c_b\) are given constants independent of \(T\) with some \(\nu\), \(\delta\), \( c_{\delta} > 0\).
\begin{theorem}
\label{Theorem: the oracle complexity in SGDM} 
Under \Cref{Ass: main assumptions}, let \(\{\bx^k, \bB^k, \bLmbd^k\}_{k \in \NN}\) be the iterate sequence from \Cref{alg:our_algorithm_sgdm} with the parameters satisfying \eqref{Eq: the parameters conditions of dualhash-sgdm}. Then there exists $R$ uniformly selected from $\{2,\dots,T+1\}$ such that \( (\bx^R, \bB^R, \bLmbd^R )\) satisfies \eqref{equ:esp} provided that \(T\) satisfies
\begin{align}
    \label{Eq: the iterate number upper bound of algorithm SGDM}
        T = \left\lceil \frac{L_{\Delta} \Delta_1+ C_{\sigma}\frac{\sigma^2}{c_b}}{\varepsilon^2} \right\rceil,
\end{align}
where  \(\Delta_1 = \Psi_{sgdm}^1 - \Psi_{sgdm}^* \) is the initial Lyapunov function value gap and \(L_{\Delta}\), \(C_{\sigma}\) are the positive constant independent of  \(T\), depending on the parameters \((L_F,  \, \eta, \,\alpha,\, \beta,\, \tau, \, \delta, \, \nu)\). Moreover, the total number of SFO is in the order of \(\calO(\varepsilon^{-4})\).
\end{theorem}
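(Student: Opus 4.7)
The plan is to prove the theorem by constructing a descent-type inequality for the Lyapunov function $\Psi_{\text{sgdm}}^k$ already introduced before the theorem, and then telescoping it over $k=1,\dots,T$ to bound the expected stationarity residual. First, I would derive per-step inequalities for each of the three blocks of the Lagrangian $\mathcal{L}$. For the $\bx$-update in \eqref{Eq: the update of x in SGDM}, using the $L_F$-smoothness of $F$ in \Cref{Ass: main assumptions}(ii) applied to the extrapolated point $\by^k$, together with the momentum identity $\bx^{k+1}-\by^k = -\eta_k \bG^k$, I obtain a bound of the form
\begin{align*}
\mathcal{L}(\bx^{k+1},\bB^k,\bLmbd^k)
&\le \mathcal{L}(\bx^k,\bB^k,\bLmbd^k)
 - a_1\|\bx^{k+1}-\bx^k\|^2 \\
&\quad + a_2\|\bx^k-\bx^{k-1}\|^2
 + a_3\,\mathbb{E}\|\bG^k-\nabla_{\bx}F(\bz^k,\bB^k)\|^2,
\end{align*}
where the $\|\bx^k-\bx^{k-1}\|^2$ term is absorbed by the differences $\Delta_{\bx}^{k+1}-\Delta_{\bx}^k$ in the Lyapunov function (this is exactly why the extra $-C_3\Delta_{\bx}^{k+1}+C_4\Delta_{\bx}^k$ terms appear in $\Psi_{\text{sgdm}}^k$). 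For the $\bB$-update in \eqref{Eq: the update of B}, smoothness of $\mathcal{L}$ in $\bB$ gives a strict descent $-(\tfrac{1}{\tau}-\tfrac{L_F}{2})\|\bB^{k+1}-\bB^k\|^2$, which is positive as long as $\tau<2/L_F$ (weaker than the stated $\tau\le\sqrt{\tilde\delta}/L_F$).

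The critical and only non-descending step is the $\bLmbd$-update \eqref{Eq: the update of lambda}. Because $\mathcal{L}$ depends linearly on $\bLmbd$ minus the convex $h^*(\bLmbd)$, the proximal ascent produces an \emph{ascent} term $\|\bLmbd^{k+1}-\bLmbd^k\|^2/\tau$ rather than a descent. I would then invoke \Cref{Lemma: dual boundedness} to dominate $\|\bLmbd^{k+1}-\bLmbd^k\|^2$ by a combination of $\|\bB^{k+2}-\bB^{k+1}\|^2$, $\|\bB^{k+1}-\bB^k\|^2$, and $\|\bx^{k+2}-\bx^{k+1}\|^2$. Absorbing the $\bB$-type residuals at steps $k+1$ and $k+2$ is exactly the purpose of the terms $-C_1\Delta_{\bB}^{k+1}+C_2\Delta_{\bB}^k$, and absorbing the $\bx$ residual at step $k+2$ motivates choosing the coefficients $C_3,C_4$ so that the resulting telescoping sum dominates $3L_F^2/\tau$ times that residual. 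The choice $\tau\le\sqrt{\tilde\delta}/L_F$ together with the stepsize $\eta_k=\eta/L_F$ and the momentum parameters $\alpha,\beta\in(0,1)$ is where the four constants $C_1,\ldots,C_4$ and the coefficient $L_\Delta$ in the theorem come from; I would set them up as a small linear algebra problem requiring each residual to appear with a strictly positive final coefficient.

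Combining the three block inequalities and taking expectation, I obtain, for an appropriate constant $L_\Delta>0$,
\begin{align*}
\mathbb{E}\Psi_{\text{sgdm}}^{k+1}
 \le \mathbb{E}\Psi_{\text{sgdm}}^k
 - L_\Delta\,\mathbb{E}\!\left[\|\bx^{k+1}-\bx^k\|^2 + \|\bB^{k+1}-\bB^k\|^2 + \|\bLmbd^{k+1}-\bLmbd^k\|^2\right]
 + C_\sigma\frac{\sigma^2}{b_k}.
\end{align*}
The variance term $\sigma^2/b_k$ uses \Cref{Ass: main assumptions}(iii) and the mini-batch bound $\mathbb{E}\|\bG^k-\nabla_{\bx}F(\bz^k,\bB^k)\|^2\le\sigma^2/b_k$. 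Summing from $k=1$ to $T$, using $\Psi_{\text{sgdm}}^{T+1}\ge\Psi_{\text{sgdm}}^*:=J^*-C_1\cdot 0$ (which is finite by \Cref{Ass: main assumptions}(i)), dividing by $T$, and plugging in $b_k=c_bT$ yields
\begin{align*}
\frac{1}{T}\sum_{k=1}^T \mathbb{E}\!\left[\|\bx^{k+1}-\bx^k\|^2 + \|\bB^{k+1}-\bB^k\|^2 + \|\bLmbd^{k+1}-\bLmbd^k\|^2\right]
\le \frac{\Delta_1}{L_\Delta T} + \frac{C_\sigma\sigma^2}{L_\Delta c_b T}.
\end{align*}

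Finally, I would translate the iterate residuals into a stationarity measure. From the optimality conditions of the three update rules (the $\bx$-gradient step combined with $L_F$-smoothness, equation \eqref{Eq: the optimality condition of Bk}, and equation \eqref{Eq: the optimality condition of lbda}), one sees that an element of $\partial\mathcal{L}(\bx^{k+1},\bB^{k+1},\bLmbd^{k+1})$ has norm squared bounded by a constant times $\|\bx^{k+1}-\bx^k\|^2+\|\bB^{k+1}-\bB^k\|^2+\|\bLmbd^{k+1}-\bLmbd^k\|^2$ plus the stochastic error $\mathbb{E}\|\bG^k-\nabla_{\bx}F(\bx^k,\bB^k)\|^2\le\mathcal{O}(\sigma^2/b)$. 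Hence picking $R$ uniformly in $\{2,\dots,T+1\}$ gives $\mathbb{E}[\text{dist}^2(\zero,\partial\mathcal{L})]\le \mathcal{O}(1/T + \sigma^2/(c_b T))$, which is $\le\varepsilon^2$ for $T=\lceil(L_\Delta\Delta_1+C_\sigma\sigma^2/c_b)/\varepsilon^2\rceil$. The SFO count is $bT=c_bT^2=\mathcal{O}(\varepsilon^{-4})$. The main obstacle I anticipate is the bookkeeping needed to verify simultaneously that all four telescoping coefficients $C_1,\dots,C_4$ are positive and that the contributions from \Cref{Lemma: dual boundedness} (which brings in iterates at time $k+2$) are fully absorbed; this is essentially a feasibility system for the constants under the parameter choice \eqref{Eq: the parameters conditions of dualhash-sgdm}, and the constants $\tilde\delta,\nu,\delta,c_\delta$ are exactly the slack variables making that system solvable.
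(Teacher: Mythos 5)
Your proposal follows essentially the same route as the paper: a per-block one-iteration analysis, with the dual increment $\|\bLmbd^{k+1}-\bLmbd^k\|^2$ controlled by \Cref{Lemma: dual boundedness} and absorbed into the Lyapunov function $\Psi_{\text{sgdm}}$, a positivity/feasibility check on $C_1,\dots,C_4$ under the parameter choice \eqref{Eq: the parameters conditions of dualhash-sgdm}, a stationarity error bound from the three optimality conditions, and telescoping with $b_k=c_bT$ to get $T=\mathcal{O}(\varepsilon^{-2})$ and SFO count $\mathcal{O}(\varepsilon^{-4})$. The argument is correct and matches the paper's proof in structure and in all the key ideas.
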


\noindent\textbf{Oracle complexity of \Cref{alg:our_algorithm_storm}:} 
Before proceeding, we introduce an additional standard assumption about the smoothness of \(F\) widely used in variance reduction methods \cite{tran2022hybrid,arjevani2023lower,xu2023momentum}.
\begin{assumption}
\label{Ass: the mean squared smoothness of F}
    There exists $L_{F} > 0$ such that for any index $i$ selected uniformly
    from $\{1,2,\ldots,n\}$, $\bx, \bx' \in \calX$, and $\bB, \bB' \in \calB$, it holds that
\begin{align}
    &\mathbb{E}_{i}\left[ \| \nabla F_{i}(\bx,\bB) - \nabla F_{i}(\bx',\bB')\|^{2}\right] \notag \\
    \leq&\, L^{2}_F \left[ \|\bx - \bx'\|^{2} + \|\bB - \bB'\|^{2} \right].
\end{align}
\end{assumption} 

This assumption is slightly stronger than the \(L_F\)-smoothness of the finite-sum function \(F\) in \Cref{Ass: main assumptions} (ii). In particular, this assumption implies \Cref{Ass: main assumptions} (ii) by Jensen's inequality, but not conversely.

As shown in \cite{xu2023momentum}, employing a moderately large initial batch size in VR methods improves the order of complexity. Motivated by this, we sample \(b_1 = \mathcal{O}(T^{1/3})\) times at the initial point to achieve reduced complexity:
\begin{align}
\label{Eq: the batch size selection of VR estimator}
b_k=
\begin{cases}
b_1,  &\text{if }k =1,\\
b, &\text{if } k > 1.
\end{cases}
\end{align}
To ensure the optimal convergence for DualHash-StoRM, we adopt the following parameter setting: 
\begin{align}
\label{Eq: the parameters conditions in dualhash-storm}
\eta_k = \frac{\eta}{\tilde{L}_{F}T^{\frac{1}{3}}},\quad \rho_k = \frac{8\rho\eta^2}{T^{\frac{2}{3}}},  \quad \tau \leq \frac{\sqrt{\tilde{\delta}}}{L_F},\quad  b_1 = c_b T^{1/3},
\end{align} where \(\eta,\, \rho, \tilde{\delta},c_b, b\) are given constants independent of \(T\) with some  \( \delta\), \( c_{\delta} > 0\).
\begin{theorem}
\label{Theorem: the oracle complexity in STORM}
Under \Cref{Ass: main assumptions} (i), (iii) and \Cref{Ass: the mean squared smoothness of F}, let \(\{\bx^k, \bB^k, \bLmbd^k\}_{k \in \NN}\) be the iterate sequence from \Cref{alg:our_algorithm_storm} with the parameters satisfying \eqref{Eq: the parameters conditions in dualhash-storm}. Then there exists $R$ uniformly selected from $\{2,\dots,T+1\}$ such that \( (\bx^R, \bB^R, \bLmbd^R )\) satisfies \eqref{equ:esp} provided that  \(T\)  satisfies
\begin{align}
    \label{Eq: the iterate number upper bound of algorithm VR}
        T = \left\lceil \frac{\left(\tilde{L}_{\Delta} \tilde{\Delta}_1+ \tilde{C}_{\sigma}\left( \frac{\sigma^2}{ 4\rho \eta^2 c_b} + \frac{32\rho\eta^2\sigma^2}{b} \right)\right)^{\frac{3}{2}}}{\varepsilon^3} \right\rceil,
\end{align}where \(\tilde{\Delta}_1 = \Psi_{storm}^1 - \Psi_{storm}^* \) is the initial Lyapunov function value gap and \(\tilde{L}_{\Delta}\), \(\tilde{C}_{\sigma}\) are the positive constant independent of \(T\), depending on the parameters \((L_F, \, \eta, \, \rho, \tau, \, \delta)\). Moreover, the total number of SFO is in the order of \(\calO(\varepsilon^{-3})\). 
\end{theorem}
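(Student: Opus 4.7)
My plan is to adapt the Lyapunov-function analysis sketched just before the theorem, with the central difficulty being the joint control of three interacting error sources: the STORM estimator bias, the ascent induced by the $\bLmbd$-update, and the block coupling propagated through \Cref{Lemma: dual boundedness}. The proof rests on three pillars: (i) a recursive variance bound for the STORM estimator $\bD^k$, (ii) a block-wise descent inequality on the simplified Lagrangian $\calL$, and (iii) a bound on $\mathbb{E}[\mathrm{dist}^2(\bzero,\partial\calL(\bx^k,\bB^k,\bLmbd^k))]$ in terms of consecutive-iterate differences and $\|\be^k\|^2$, where $\be^k := \bD^k - \nabla_{\bx}F(\bx^k,\bB^k)$.

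First, I would establish the STORM variance recursion. Expanding \eqref{Eq: the stochastic approximate in STORM} and using \Cref{Ass: the mean squared smoothness of F} together with independence of the mini-batches and the zero-mean of the per-sample noise, a standard calculation yields
\begin{align*}
\mathbb{E}\|\be^k\|^2 &\leq (1-\rho_{k-1})^2 \mathbb{E}\|\be^{k-1}\|^2 + \frac{2\rho_{k-1}^2\sigma^2}{b} \\
&\quad + \frac{2(1-\rho_{k-1})^2 L_F^2}{b}\mathbb{E}\bigl[\|\bx^k-\bx^{k-1}\|^2 + \|\bB^k-\bB^{k-1}\|^2\bigr].
\end{align*}
The initial batch $b_1 = c_b T^{1/3}$ ensures $\mathbb{E}\|\be^1\|^2\le \sigma^2/b_1 = \Theta(T^{-1/3})$, matching the final rate. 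For the block descents, the $\bx$-update \eqref{Eq: the update of x in STORM} combined with smoothness of $F$ in $\bx$ gives
\begin{align*}
\mathbb{E}\calL(\bx^{k+1},\bB^k,\bLmbd^k) &\leq \mathbb{E}\calL(\bx^k,\bB^k,\bLmbd^k) + \tfrac{\eta_k}{2}\mathbb{E}\|\be^k\|^2 \\
&\quad - \bigl(\tfrac{1}{2\eta_k}-\tfrac{L_F}{2}\bigr)\mathbb{E}\|\bx^{k+1}-\bx^k\|^2,
\end{align*}
while the $\bB$-update \eqref{Eq: the update of B} yields a genuine drop $-(\tfrac{1}{\tau}-\tfrac{L_F}{2})\|\bB^{k+1}-\bB^k\|^2$. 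The $\bLmbd$-update \emph{raises} $\calL$ by $\tfrac{1}{\tau}\|\bLmbd^{k+1}-\bLmbd^k\|^2$, which by \Cref{Lemma: dual boundedness} is dominated by a linear combination of $\|\bx^{k+2}-\bx^{k+1}\|^2$, $\|\bB^{k+2}-\bB^{k+1}\|^2$, and $\|\bB^{k+1}-\bB^k\|^2$.

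Second, I would choose the constants $C_1,C_2,C_3$ in $\Psi_{\text{storm}}^k$ so that the shifted-difference terms exactly absorb the contributions of \Cref{Lemma: dual boundedness}, converting the net effect of all three block updates into a genuine drop. Adding an auxiliary potential $\lambda\|\be^k\|^2$ with $\lambda = \Theta(T^{1/3})$ then absorbs the STORM recursion: the contraction factor $(1-\rho_{k-1})^2$ together with the additive $\be^{k-1}$ term produces a negative drift of order $-\lambda\rho_k\|\be^k\|^2 = -\Theta(T^{-1/3})\|\be^k\|^2$, which exactly cancels the $\tfrac{\eta_k}{2}\|\be^k\|^2 = \Theta(T^{-1/3})\|\be^k\|^2$ produced in the $\bx$-step. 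The result is a telescoping inequality
\begin{align*}
\mathbb{E}[\tilde\Psi^{k+1}-\tilde\Psi^k] &\leq -c\,\mathbb{E}\bigl[\|\bx^{k+1}-\bx^k\|^2 + \|\bB^{k+1}-\bB^k\|^2\bigr] \\
&\quad + \mathcal{O}\bigl(\rho_k^2\sigma^2/b\bigr).
\end{align*}
Summing over $k=1,\dots,T$, dividing by $T$, and invoking the optimality conditions \eqref{Eq: the optimality condition of Bk}--\eqref{Eq: the optimality condition of lbda} (which bound $\mathrm{dist}^2(\bzero,\partial\calL)$ at iteration $R$ by $\mathcal{O}(\|\bx^{R+1}-\bx^R\|^2 + \|\bB^{R+1}-\bB^R\|^2 + \|\bLmbd^{R+1}-\bLmbd^R\|^2 + \|\be^R\|^2)$) gives
$$\mathbb{E}[\mathrm{dist}^2(\bzero,\partial\calL)] = \mathcal{O}\bigl(\tilde L_\Delta \tilde\Delta_1/T + \tilde C_\sigma(\sigma^2/(\rho\eta^2 c_b) + \rho\eta^2\sigma^2/b)/T^{2/3}\bigr),$$
and setting the dominant $T^{-2/3}$ term equal to $\varepsilon^2$ yields the stated $T$. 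Since $b_1 = \Theta(T^{1/3})$ and $b = \Theta(1)$ thereafter, the total SFO count is $b_1 + bT = \Theta(T) = \mathcal{O}(\varepsilon^{-3})$.

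The main obstacle is the simultaneous balancing of the three potentials — the Lagrangian, the STORM variance, and the dual-absorption shifts — under the coupled parameter scalings $\eta_k=\Theta(T^{-1/3})$ and $\rho_k=\Theta(T^{-2/3})$. Each block update contributes positive terms that must be killed by a combination of (a) the contraction in $\Psi$, (b) the $-\rho_k$ contraction in the STORM potential, and (c) the shifted $\Delta_{\bx}^{k+1},\Delta_{\bB}^{k+1}$ terms in $\Psi_{\text{storm}}^k$. Verifying that all constants close simultaneously, and that the dual-absorption coefficients remain strictly positive under the constraint $\tau\leq\sqrt{\tilde\delta}/L_F$, is where the specific choice \eqref{Eq: the parameters conditions in dualhash-storm} becomes essential and where the bulk of the bookkeeping lies.
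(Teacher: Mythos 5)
Your proposal is correct and follows essentially the same route as the paper: a block-wise descent on $\calL$ absorbed into the Lyapunov function $\Psi_{\text{storm}}$ via \Cref{Lemma: dual boundedness}, combined with the STORM error recursion through an augmented potential carrying a $\Theta(T^{1/3})$ weight on $\|\be^k\|^2$ (the paper's weight is $\tfrac{1}{8\eta_1 L_F^2}$), followed by the stationarity bound, averaging, and the $b_1+bT=\calO(\varepsilon^{-3})$ SFO count. The only substantive deviation is that your variance recursion carries the extra $\|\bB^k-\bB^{k-1}\|^2$ propagation term (consistent with the joint mean-squared smoothness in \Cref{Ass: the mean squared smoothness of F}), which the paper's recursion omits but which is harmlessly absorbed by the existing negative drift on $\Delta_{\bB}^{k+1}$.
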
 

\begin{remark}
\begin{enumerate}[label=(\roman*), leftmargin=2em]
\item The results in Theorems \ref{Theorem: the oracle complexity in SGDM} and \ref{Theorem: the oracle complexity in STORM} are novel for stochastic, non-convex, multi-block primal-dual optimization, a challenging area where existing work has largely focused on one-block \cite{xu2023momentum,shi2025momentum,tran2022hybrid} or deterministic settings \cite{ADMMNolinear2024}.  
    \item  \Cref{Theorem: the oracle complexity in SGDM} with constant parameters from \eqref{Eq: the parameters conditions of dualhash-sgdm} attains both oracle and sample \(\mathcal{O}(\varepsilon^{-4})\) complexities, which matches that of stochastic first-order methods (SFOMs) for one-block stochastic optimization \cite{Ghadimi2013MinibatchSA}. Meanwhile, \Cref{Theorem: the oracle complexity in STORM} with the refined constant parameters from \eqref{Eq: the parameters conditions in dualhash-storm} attains both oracle and sample $\mathcal{O}(\varepsilon^{-3})$ complexities. This matches the best-known lower bound \cite{arjevani2023lower} and the complexity of variance-reduced SFOMs for single-block problems \cite{shi2025momentum,xu2023momentum}. Notably, despite tackling a multi-block problem, our primal-dual framework preserves this efficiency without degrading the dependence on $\varepsilon$.
   \item  While DualHash-StoRM (\Cref{alg:our_algorithm_storm}) with parameters in \eqref{Eq: the parameters conditions in dualhash-storm} achieves better complexity than DualHash-StoM (\Cref{alg:our_algorithm_sgdm}) under \eqref{Eq: the parameters conditions of dualhash-sgdm}, the latter relies on a weaker assumption (\Cref{Ass: main assumptions} [ii]). In our experiments, while DualHash-StoRM enjoys faster training convergence, DualHash-StoM enerally delivers better final retrieval performance.
\end{enumerate}
\end{remark}

\section{Numerical experiments}
\label{Section: Numerical experiments}
\begin{table*}[t]
    \centering
    \caption{The mAP results of DualHash and baselines with different numbers of bits \\ on CIFAR-10 and NUS-WIDE datasets.}
    \label{Table: baselines mAP}
    \resizebox{0.93\textwidth}{!}{
    \begin{tabular*}{1\textwidth}{@{\extracolsep{\fill}}ccccccccc@{}}
        \toprule
        \multirow{2}{*}{Methods} & \multicolumn{4}{c}{CIFAR-10 (mAP@All)} & \multicolumn{4}{c}{NUS-WIDE (mAP@5000)} \\
        \cmidrule(lr){2-5} \cmidrule(lr){6-9}
         & 16 bits & 32 bits & 48 bits & 64 bits & 16 bits & 32 bits & 48 bits & 64 bits \\
        \midrule
        SDH \cite{shenSupervisedDiscreteHashing2015} & 0.4254 & 0.4575 & 0.4751 & 0.4855 & 0.4756\textsuperscript{*} & 0.5545\textsuperscript{*} & 0.5786\textsuperscript{*} & 0.5812\textsuperscript{*} \\
        SDH-CNN & 0.6665 & 0.6501 & 0.6484 & 0.6260 & 0.4872 & 0.6529 & 0.6451 & 0.5239 \\
        DSH \cite{liuDeepSupervisedHashing} & 0.6610 & 0.7530 & 0.7817 & 0.8010 & 0.5195 & 0.6629 & 0.6617 & 0.6634 \\
        DTSH \cite{wang2016DTSH}  & 0.7661 & 0.7501 & 0.7651 & 0.7744 & 0.6590 & 0.6845 & 0.7130 & 0.7371 \\ 
        DHN \cite{zhuDeepHashingNetwork2016} & 0.6568 & 0.7608 & 0.7838 & 0.8051 & 0.5562 & 0.6427 & 0.6808 & 0.7042 \\
        HashNet \cite{Cao2017HashNetDL} & 0.3354 & 0.6511 & 0.7732 & 0.8396 & 0.5879 & 0.6769 & 0.7221 & 0.7249 \\
        DSDH \cite{li2017deep} & 0.2732 & 0.3648 & 0.4222 & 0.4941 & 0.5153 & 0.5953 & 0.6338 & 0.6619 \\
        OrthoHash \cite{hoeOneLossAll2021} & 0.8070 & 0.8059 & 0.8387 & 0.8355 & \textbf{0.6653} & 0.6912 & 0.7083 & 0.7176 \\
        MDSHC \cite{wang2023deep} & 0.8142 & 0.8238 & 0.8350 & 0.8262 & 0.5902 & 0.6452 & 0.6081 & 0.6493 \\
         \cmidrule(lr){2-9} 
        \textbf{DualHash-StoRM} & 0.8037 & 0.8051 & 0.8168 & 0.8345 & 0.6485 & 0.6802 & 0.6951 & 0.6982 \\
        \textbf{DualHash-StoM} & \textbf{0.8215} & \textbf{0.8481} & \textbf{0.8534} & \textbf{0.8539} & 0.6339 & \textbf{0.7002} & \textbf{0.7248} & \textbf{0.7448} \\
        \bottomrule
    \end{tabular*}}
    \vspace{-1ex}
    \parbox{0.93\textwidth}{\footnotesize
    \textsuperscript{*}SDH results on NUS-WIDE cited from HashNet (identical settings and metrics).
    \textbf{Bold} values indicate the best results.
    }
\end{table*}
\begin{figure*}[thbp]
    \centering
    \includegraphics[width=0.995\textwidth]{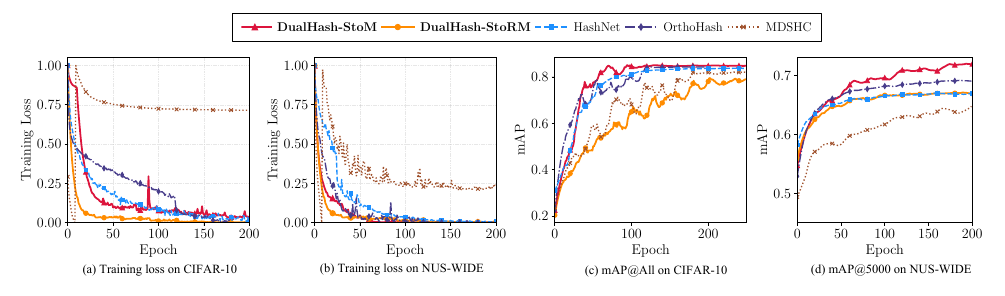}
    \caption{Training loss and the mAP results of DualHash and baselines with 64 bits on CIFAR-10 and NUS-WIDE datasets.}
    \label{fig:training_loss_map_2datasets}
    \vspace{-2ex}
\end{figure*}
We conduct extensive experiments to evaluate the effectiveness of DualHash against several state-of-the-art hashing methods on three
standard image retrieval datasets. All experiments use PyTorch 1.12.1 with CUDA 11.3 on an NVIDIA V100 GPU platform.
Visualizations are implemented using Python on macOS 15.3.2. 
\subsection{Experimental Setup}

\noindent\textbf{Datesets.} \textbf{CIFAR-10}\footnote{\url{https://www.cs.toronto.edu/~kriz/cifar.html}} is a \textbf{single-label} dataset comprising 60,000 $32 \times 32$ color images from 10 classes. 
In our experiment, we randomly sample 1,000 images per class (10,000 total) for training, 500 per class (5,000 total) for validation, and 500 per class (5,000 total) for testing.
\textbf{NUS-WIDE}\footnote{\url{http://lms.comp.nus.edu.sg/research/NUS-WIDE.htm}} is a \textbf{multi-label} dataset containing approximately 270,000 web images, each annotated with one or more labels from 81 categories. Following \cite{liuHashingGraphs, laiSimultaneousFeatureLearning2015, liuDeepSupervisedHashing}.
In our experiment, we use a subset of 195,834 images from the 21 most frequent concepts (each with at least 5,000 images) and randomly sample 700 images per class (14,700 total) for training, 300 per class (6,300 total) for validation, and 300 per class (6,300 total) for testing.
\textbf{ImageNet}\footnote{\url{https://www.image-net.org/}} is a large-scale image dataset from the Large Scale Visual Recognition Challenge (ILSVRC 2015) [32]. It contains over 1.2M images in the training set and 50K images in the validation set, where each image is \textbf{single-labeled} by one of the 1,000 categories. Following \cite{Cao2017HashNetDL}, in our experiment, we randomly select 100 categories to create ImageNet-100. We use 100 images per category (10,000 total) for training, 30 per category (3,000 total) for validation, and 50 per category (5,000 total) for testing. The training, validation, and testing sets are mutually exclusive with no data overlap.

\noindent\textbf{Training Setup.}  It is worth noting that our method is compatible with different similarity measurement paradigms, including pointwise, pairwise and tripletwise methods. In our experiments, we focus on deep supervised pairwise hashing using a pairwise cross-entropy loss \( f \) and the nonsmooth W-type regularization term \( h(z) = \lambda \| |\bz| -1 \|_1 \). 

\noindent\textbf{Baselines.} We compare against eight baselines: SDH \cite{shenSupervisedDiscreteHashing2015}; four deep hashing methods with W-type regularizations (DHN \cite{zhuDeepHashingNetwork2016}, DSH \cite{liuDeepSupervisedHashing}, DTSH \cite{wang2016DTSH}, MDSHC \cite{wang2023deep}); 
and three state-of-the-art methods addressing quantization error (HashNet \cite{Cao2017HashNetDL}, DSDH \cite{li2017deep}, OrthoHash \cite{hoeOneLossAll2021}). Following \cite{liuDeepSupervisedHashing}, we report the mean Average Precision (mAP), precision curves (AP@topK), and precision within Hamming radius 2 (AP@r=2). Detailed definitions and a full introduction of the baselines are provided in the Supplementary Materials A.

\noindent\textbf{Architecture.} For fair comparison, all deep learning-based methods employ AlexNet and ResNet-50\footnote{AlexNet is the default backbone unless specified.} as the 
backbone network and replace the ReLU activation with ELU to leverage its 
1-Lipschitz continuous gradient property for satisfying the smoothness 
assumption (\Cref{Ass: main assumptions}(ii)).
All images are resized to $256 \times 256$ pixels and center-cropped to 
$224 \times 224$. We fine-tune the pre-trained convolutional layers 
(conv1--conv5) and fully connected layers (fc6--fc7), while initializing 
any newly added layers with Kaiming initialization. \footnote{Although DSDH \cite{li2017deep} uses the CNN-F architecture, it shares a similar structure with AlexNet—both consist of five convolutional layers followed by two fully connected layers.}
For the non-deep method (SDH \cite{shenSupervisedDiscreteHashing2015}), we use the following hand-crafted features: 512-dimensional GIST vectors for CIFAR-10, 500-dimensional bag-of-words features for NUS-WIDE, and 1024-dimensional CNN features for ImageNet-100. During training, we monitor validation performance in real-time to observe convergence behavior and adjust hyperparameters accordingly. For evaluation, we use the training set as the retrieval database and the test set as queries.

\noindent\textbf{Hyperparameter Settings.} All models are trained for a maximum of 300 epochs with early stopping based on validation mAP. We use a fixed mini-batch size of 256 for CIFAR-10 and 128 for NUS-WIDE and ImageNet-100.
For DualHash-StoM, the learning rate $\eta_k$ is selected from $\{10^{-4}, 10^{-3}, 10^{-2}, 10^{-1}\}$ with step decay, approximating $\eta_k \sim O(L_F^{-1})$ without computing $L_F$. For DualHash-StoRM, $\eta_k$ is chosen from $\{5 \times 10^{-3}, 5 \times 10^{-2}, 5 \times 10^{-1}\}$ following $\mathcal{O}(T^{-1/3})$ scaling, and $\rho_k$ is selected from $\{0.01, 0.05, 0.1, 0.2, 0.7\}$ via cross-validation. All methods use momentum parameters $\alpha^k = \beta^k = 0.905$.
The proximal operator for $h^*(\cdot)$ requires $\tau < \lambda$ to ensure well-defined updates. We tune $\lambda \in \{10^{-3}, 10^{-2}, 10^{-1}\}$ and set $\tau$ accordingly. For all datasets, we set $\tau = 10^{-2}$, $\lambda = 5 \times 10^{-2}$, and $\gamma = 3$, with $\lambda$ updated using a step size of $0.1\tau$. All results represent averages over 10 independent runs.

\subsection{Convergence Validation}
\label{Subsection: Convergence validation and performance efficiency analysis}
\begin{figure*}[!t]
    \centering
    \includegraphics[width=0.993\textwidth]{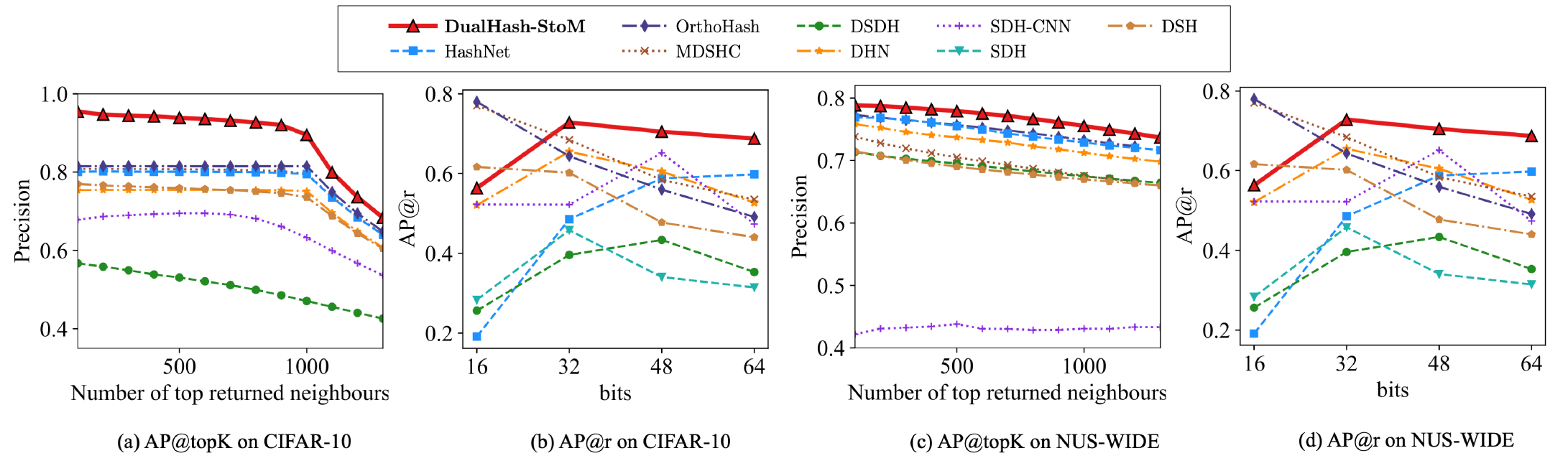}
    \caption{Retrieval performance of DualHash and baselines on CIFAR-10 and NUS-WIDE datasets. (a), (c): AP@topK with 64 bits;  (b), (d): AP@r(=2) with different numbers of bits.}
 \label{fig:retrieval_2datasets}
    \vspace{-1ex}
\end{figure*}
\begin{table*}[!t]
    \centering
    \caption{The mAP results of DualHash and baselines with different numbers of bits \\ on CIFAR-10 and NUS-WIDE datasets with  ResNet50 backbone.}
    \label{Table: performance_comparison_resnet50_small}
    \resizebox{0.93\textwidth}{!}{
    \begin{tabular*}{1\textwidth}{@{\extracolsep{\fill}}ccccccccc@{}}
        \toprule
         & \multicolumn{4}{c}{CIFAR-10 (mAP@All)} & \multicolumn{4}{c}{NUS-WIDE (mAP@5000)} \\
        \cmidrule(lr){2-5} \cmidrule(lr){6-9}
         Method (ResNet50) & 16 bits & 32 bits & 48 bits & 64 bits & 16 bits & 32 bits & 48 bits & 64 bits \\
        \midrule
        DSH \cite{liuDeepSupervisedHashing} & 0.7403 & 0.8116 & 0.8217 & 0.8239 & 0.5559 & 0.6952 & 0.6981 & 0.7165 \\
        OrthoHash \cite{hoeOneLossAll2021} & \textbf{0.9032} & 0.9182 & 0.9202 & 0.9224 & \textbf{0.7448} & 0.7600 & 0.7569 & 0.7863 \\
        MDSHC \cite{wang2023deep} & 0.8980 & 0.9007 & 0.9063 & 0.9227 & 0.6428 & 0.7002 & 0.7248 & 0.7758 \\
        \cmidrule(lr){2-9}
        DualHash-StoM & 0.8559 & \textbf{0.9202} & \textbf{0.9392} & \textbf{0.9416} & 0.6538 & \textbf{0.7783} & \textbf{0.7721} & \textbf{0.8016} \\
        \bottomrule
    \end{tabular*}}
    
    \vspace{2ex}
    \parbox{0.93\textwidth}{\footnotesize
    \textbf{Bold} values indicate the best results.
    }
    \vspace{-1ex}
\end{table*}
We validate our theoretical results (Theorems \ref{Theorem: the oracle complexity in SGDM} and \ref{Theorem: the oracle complexity in STORM}) and compare with HashNet, OrthoHash, and MDSHC with 64 bits on both datasets. 
\Cref{fig:training_loss_map_2datasets} shows the training loss and validation mAP curves over epochs.
As shown in \Cref{fig:training_loss_map_2datasets} (a)-(b), DualHash-StoRM achieves the fastest convergence and lowest loss values on both datasets, requiring only approximately 25 epochs on CIFAR-10. This aligns with our theoretical outcome: the variance reduction mechanism accelerates convergence. Though slower than DualHash-StoRM, DualHash-StoM still outperforms other baselines (approximately 75 epochs on CIFAR-10). 
Meanwhile, \Cref{fig:training_loss_map_2datasets} (c)-(d) show that although DualHash-StoRM converges faster during training, DualHash-StoM ultimately exhibits superior retrieval performance compared to other baselines. 
\subsection{Comparison of Retrieval Performance}
\label{Subsection: Comparison of retrieval performance}
\begin{table}[htbp]
\vspace{2ex}
    \centering
    \caption{The mAP results of DualHash and baselines with different numbers of bits on the ImageNet-100 dataset with ResNet50 backbone.}
    \label{tab:performance_imagenet_resnet50}
    \small
    \begin{tabular}{lcccc}
     \toprule
 & \multicolumn{4}{c}{ImageNet-100 (mAP@1000)} \\
\cmidrule(lr){2-5}
Method (ResNet50) & 16 bits & 32 bits & 48 bits & 64 bits \\
\midrule
DSH \cite{liuDeepSupervisedHashing} & 0.7179 & 0.7448 & 0.7335 & 0.7585 \\
OrthoHash \cite{hoeOneLossAll2021} & 0.8040 & \textbf{0.8292} & 0.8510 & 0.8636 \\
MDSHC \cite{wang2023deep} & \textbf{0.8047} & 0.8131 & 0.8561 & 0.8657 \\
 \cmidrule(lr){2-5}
DualHash-StoM & 0.7402 & 0.7705 & \textbf{0.8662} & \textbf{0.8714} \\
        \bottomrule
    \end{tabular}
    \vspace{3ex}
    \parbox{\linewidth}{\footnotesize
    \textbf{Bold} values indicate the best results.
    }
    \vspace{-3ex}
\end{table}
We report retrieval performance on CIFAR-10 and NUS-WIDE datasets, as shown in Table \ref{Table: baselines mAP}
and \Cref{fig:retrieval_2datasets}. 
In terms of mAP, DualHash methods consistently outperform most baselines, including recent advanced methods like OrthoHash and MDSHC.
On CIFAR-10, DualHash-StoM achieves 0.8539 mAP with 64 bits, outperforming OrthoHash (0.8355). On the NUS-WIDE dataset, it reaches 0.7448 mAP, significantly superior to MDSHC (0.6493).  
Although slightly less accurate than the StoM variant, DualHash-StoRM remains comparable to other baselines, providing a trade-off between accuracy and efficiency. 

To further demonstrate the scalability of our approach, we evaluate our method and three baselines using a ResNet50 backbone. As shown in \Cref{Table: performance_comparison_resnet50_small}, this modern architecture brings substantial performance gains in terms of mAP for all methods across both datasets. Our DualHash-StoM, for example, achieves mAP improvements of 10.3\% on CIFAR-10 (0.9416 vs 0.8539) and 9.1\% on NUS-WIDE (0.8016 vs 0.7448) at 64 bits. Nevertheless, our method consistently maintains peak performance at 32 bits or more, outperforming all baseline methods. Moreover, on the more complex ImageNet-100 dataset, our method continues to demonstrate superior performance. As reported in \Cref{tab:performance_imagenet_resnet50}, DualHash-StoM reaches 86.62\% and 87.14\% mAP at 48 and 64 bits, outperforming all baselines. These results confirm that our method maintains robust performance and strong generalization capability as data complexity increases.

For AP@topK (\Cref{fig:retrieval_2datasets} (a),(c)),  DualHash maintains 0.9 precision at lower top-k values on CIFAR-10, significantly outperforming other methods. This indicates that our method is suitable for precision-oriented image retrieval systems.
For AP@r metric (\Cref{fig:retrieval_2datasets} (b),(d)), DualHash-StoM consistently maintains peak performance at 32 bits and above, with 32 bits being optimal across both datasets. These results align with the fact that as bit length increases, the Hamming space becomes sparse and few data points fall within the Hamming ball with radius 2 \cite{norouzi2012fast}.

\subsection{Quantization Error Analysis}
\label{Subsection: Further analysis}

\begin{figure*}[!thbp]
    \centering
    \includegraphics[width=0.95\textwidth]{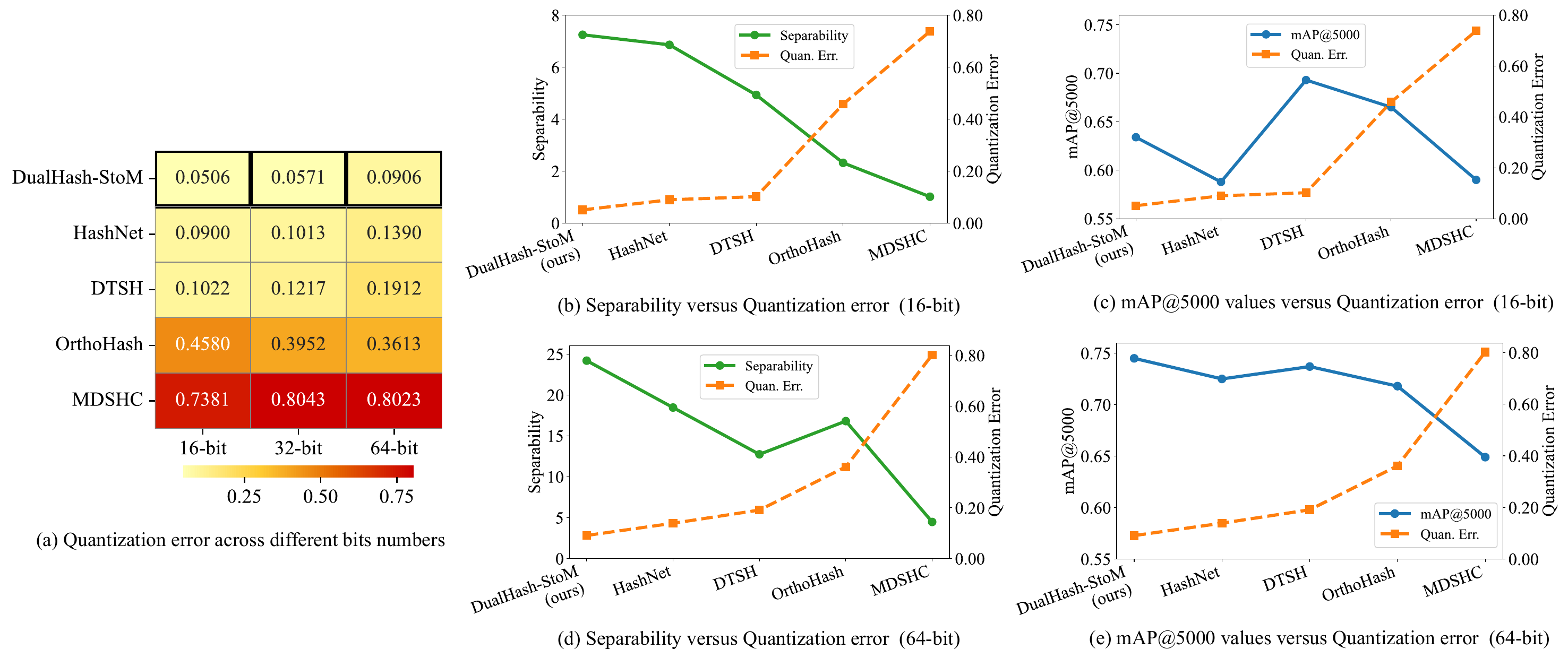}
    \caption{Quantization error and its impact on retrieval performance across different numbers of bits on  NUS-WIDE.}
    \label{fig:quantization_error}
\end{figure*}
\begin{figure*}[thbp]
    \vspace{-1mm}
    \centering
    \includegraphics[width=0.95\textwidth]{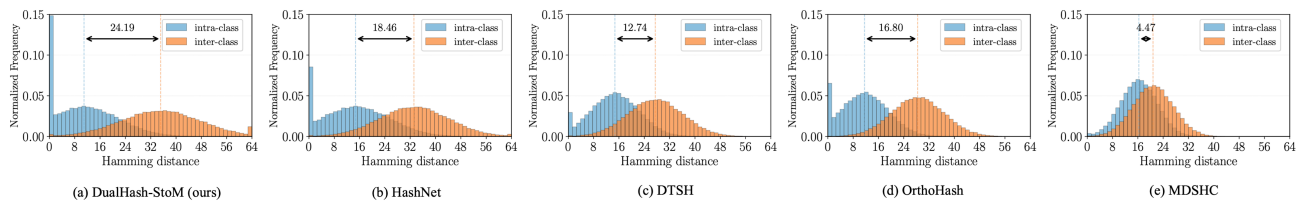}
    \caption{Histogram of intra-class and inter-class Hamming distances of DualHash and baselines with 64 bits on the NUS-WIDE dataset. The arrow annotation is the separability in Hamming distances, \(\EE [D_{inter}] - \EE [D_{intra}]\). We normalized the frequency so that the sum of all bins equals to 1.}
    \label{fig: His_64_NUSWIDE}
    \vspace{-2ex}
\end{figure*}
\begin{figure*}[thbp]
    \vspace{-1ex}
    \centering
    \includegraphics[width=0.95\textwidth]{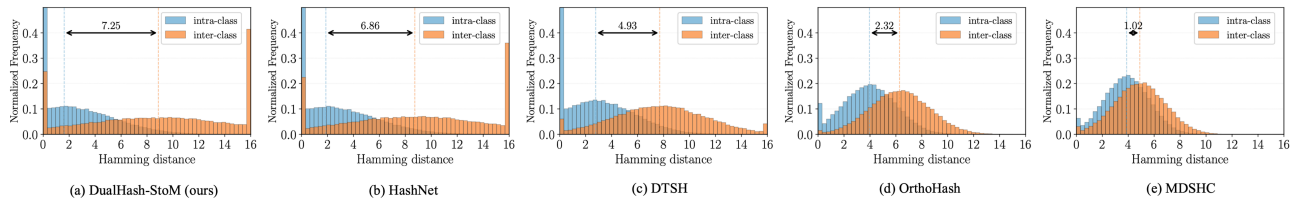}
    \caption{Histogram of intra-class and inter-class Hamming distances of DualHash and baselines with 16 bits on NUS-WIDE.}
    \label{fig: His_16_NUSWIDE}
    \vspace{-2ex}
\end{figure*}
\begin{figure*}[!thbp]
    \centering
    \includegraphics[width=0.95\textwidth]{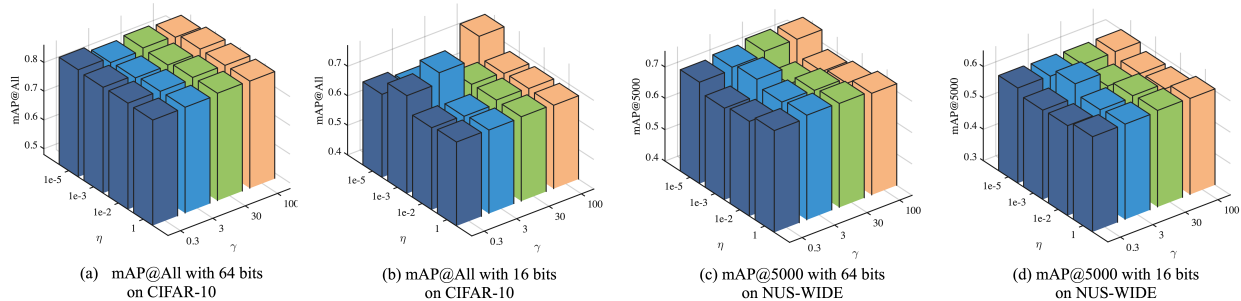}
    \caption{The mAP results of DualHash-StoM with Different settings of \(\eta\) and \(\lambda\) with 64 \& 16 bits on CIFAR-10 and NUS-WIDE.}
    \label{fig: sensitivity analysis of DualHash}
    \vspace{-2ex}
\end{figure*}
To elucidate the mechanisms behind the performance of DualHash, following -\cite{hoeOneLossAll2021},
we examine the quantization error (the discrepancy between continuous outputs and binary codes, \Cref{fig:quantization_error}) and the Hamming distance distributions of the resulting hash codes (\Cref{fig: His_64_NUSWIDE,fig: His_16_NUSWIDE}) across different bit lengths $K$ on the NUS-WIDE dataset. We compare DualHash-StoM against four deep hashing methods: HashNet \cite{Cao2017HashNetDL} and OrthoHash \cite{hoeOneLossAll2021} (employing distinct quantization strategies), and DTSH \cite{wang2016DTSH} and MDSHC \cite{wang2023deep} (both utilizing $W$-type regularization).

\vspace{3pt}
\noindent\textbf{Quantization Error Correlates with Intra- and Inter-class Distances.}
We find that methods achieving smaller quantization errors produce hash codes with reduced intra-class distances and enhanced inter-class separability, as shown in \Cref{fig:quantization_error}(a) and \Cref{fig: His_16_NUSWIDE,fig: His_64_NUSWIDE}. In the 64-bit setting (\Cref{fig: His_64_NUSWIDE}), both HashNet and DualHash exhibit similar inter-class distributions (close to Hamming distance $K/2 = 32$). However, DualHash's lower quantization error (0.0906 vs. 0.1390) enables significantly superior separability, i.e., the difference in the mean of intra-class distance (the blue dotted line)
with the mean of inter-class distance (the orange dotted line). In contrast, MDSHC's high quantization error (0.8023) leads to substantially larger intra-class distances and weaker inter-class separation, yielding a $5\times$ inferior separability ratio (24.19 vs. 4.47). 
This demonstrates that reducing quantization error enhances code quality primarily by compressing intra-class distributions.

\vspace{3pt}
\noindent\textbf{DualHash Achieves Lower Quantization Error via Dual-Space Reformulation.}
As shown in \Cref{fig:quantization_error}(a), DualHash attains the smallest quantization error across different bit lengths among all evaluated methods. This performance advantage stems from a fundamental methodological innovation: DualHash employs a novel dual-space reformulation of the W-regularization. The convexity property inherent in our dual formulation's conjugate function yields significant optimization advantages compared to directly optimizing the original nonconvex regularization or activation functions. 

\vspace{3pt}
\noindent\textbf{Bit-Dependent Performance and the Separability-Semantics Trade-off.}
While lower quantization error generally improves separability (\Cref{fig:quantization_error}(b) and (d)), its impact on retrieval performance varies significantly across bit settings, revealing a fundamental trade-off between class separability and semantic preservation.

\noindent\textit{High-bit regime ($K = 64$): Lower quantization error helps produce high mAP.}
At $K = 64$ (Figures \ref{fig:quantization_error}(d) and (e)), lower quantization error consistently correlates with higher mAP by enhancing separability and preserving both semantic discriminability and visual feature quality. Methods achieving low quantization loss (e.g., DualHash, HashNet, and DTSH) demonstrate superior retrieval accuracy, with DualHash achieving peak mAP performance.

\noindent\textit{Low-bit regime ($K = 16$): Low quantization error may produce low performance.}
At $K = 16$ (\Cref{fig:quantization_error}(b) and (c)), we observe a contrasting pattern: both extremely low and extremely high quantization error methods yield suboptimal performance. Notably, DTSH, which maintains intermediate quantization error, achieves superior mAP. Methods with very low quantization error (e.g., DualHash and HashNet) exhibit relatively lower mAP, particularly on multi-label datasets (e.g., NUS-WIDE). Specifically, we observe that  DualHash generates hash codes with low intra-class distances and high inter-class separation. This creates relatively clean class separation, but the actual visual relationships between images may get lost. Consequently, DualHash excels on simple datasets like CIFAR-10 but shows relatively lower performance on complex multi-label datasets like NUS-WIDE and ImageNet-100.

\subsection{Parameter Sensitivity Analysis}
To explore the sensitivity of key hyperparameters $\lambda$ and $\gamma$ in 
DualHash, we conduct a grid search with $\lambda \in \{10^{-5}, 10^{-3}, 
10^{-2}, 1\}$ and $\gamma \in \{0.3, 3, 30, 100\}$. Unlike many deep hashing 
methods that exhibit significant hyperparameter sensitivity, DualHash-StoM 
demonstrates robust performance across a wide range of hyperparameter 
combinations (see \Cref{fig: sensitivity analysis of DualHash}). 
Specifically, 64-bit codes achieve excellent stability with mAP variations 
below 3\% on CIFAR-10 and 8\% on NUS-WIDE. While 16-bit codes show greater 
sensitivity (up to 18\% on CIFAR-10), this behavior is consistent with the 
intrinsic optimization challenges of extremely compact hash codes. Overall, 
the method demonstrates strong stability for standard hash lengths ($\geq$64 bits), 
which simplifies hyperparameter tuning in practical high-precision retrieval 
applications.

\section{Conclusion and Discussion}
\label{Section: Conclusion}
This paper proposed DualHash, a stochastic primal-dual algorithm for deep hashing with rigorous non-asymptotic convergence guarantees. Our approach effectively addressed the fundamental optimization challenge of W-type regularization with nonlinear composition through a novel ``decoupling-dual-structure'' strategy: transforming the nonconvex regularizer into a convex dual space with closed-form proximal updates. We provided a distinct theoretical framework beyond conventional primal-dual analysis, establishing complexity bounds of $\mathcal{O}(\varepsilon^{-4})$ for DualHash-StoM and optimal $\mathcal{O}(\varepsilon^{-3})$ for DualHash-StoRM. Extensive experiments validated  DualHash achieved superior retrieval performance across all metrics, including the highest mAP at 32 bits or more, consistently lower quantization errors, and stronger robustness on both datasets. 

Beyond deep hashing, our core \textbf{decoupling-dual-structure} optimization principle offered a promising pathway for tackling nonconvex regularization challenges in related domains, including neural network quantization \cite{BinaryConnect,Bai2018ProxQuantQN,Yang2019QuantizationN} and sparse regularized neural network training \cite{bui2021structured,louizos2018learning,xu2012l,wen2016learning}.

\bibliographystyle{IEEEtran}
\bibliography{refers}

@article{Nesterov1983AMF,
  title = {A Method for Solving the Convex Programming Problem with Convergence Rate {{O}}(1/K{\textasciicircum}2)},
  author = {Nesterov, Yurii},
  year = {1983},
  journal = {Proceedings of the USSR Academy of Sciences},
  volume = {269},
  pages = {543--547}
}

@Inproceedings{Huang2020FasterSA,
 author = {Feihu Huang and Songcan Chen and Heng Huang},
 booktitle = {International Conference on Machine Learning (ICML)},
 title = {Faster Stochastic Alternating Direction Method of Multipliers for Nonconvex Optimization},
 year = {2020}
}

@article{bian2021stochastic,
  title={A stochastic alternating direction method of multipliers for non-smooth and non-convex optimization},
  author={Bian, Fengmiao and Liang, Jingwei and Zhang, Xiaoqun},
  journal={Inverse Problems},
  volume={37},
  number={7},
  pages={075009},
  year={2021},
  publisher={IOP Publishing}
}

@article{bolteNonconvexLagrangian2018,
  title = {Nonconvex {{lagrangian-based optimization}}: {{monitoring schemes}} and {{global convergence}}},
  author = {Bolte, J{\'e}r{\^o}me and Sabach, Shoham and Teboulle, Marc},
  journal = {Mathematics of Operations Research},
  year = {2018},
  month = nov,
  volume = {43},
  number = {4},
  pages = {1210--1232},
  doi = {10.1287/moor.2017.0900},
  publisher = {INFORMS}
}

@inproceedings{caoDeepCauchyHashing2018,
  title = {Deep Cauchy Hashing for Hamming Space Retrieval},
  author = {Cao, Yue and Long, Mingsheng and Liu, Bin and Wang, Jianmin},
  booktitle = {Proceedings of the IEEE/CVF Conference on Computer Vision and Pattern Recognition (CVPR)},
  pages = {1229--1237},
  year = {2018},
  doi = {10.1109/CVPR.2018.00134}
}

@inproceedings{Cao2017HashNetDL,
  author = {Zhangjie Cao and Mingsheng Long and Jianmin Wang and Philip S Yu},
  title = {Hashnet: Deep learning to hash by continuation},
  booktitle = {Proceedings of the IEEE International Conference on Computer Vision (ICCV)},
  pages = {5608--5617},
  year = {2017}
}

@article{chambolleFirstOrderPrimalDualAlgorithm2011,
  title = {A First-Order Primal-Dual Algorithm for Convex Problems with Applications to Imaging},
  author = {Chambolle, Antonin and Pock, Thomas},
  journal = {Journal of Mathematical Imaging and Vision},
  year = {2011},
  volume = {40},
  number = {1},
  pages = {120--145},
  doi = {10.1007/s10851-010-0251-1},
  publisher = {Springer}
}

@inproceedings{cutkoskyMomentumBasedVarianceReduction2019,
  title = {Momentum-Based Variance Reduction in Non-Convex SGD},
  booktitle = {Advances in Neural Information Processing Systems (NeurIPS)},
  author = {Cutkosky, Ashok and Orabona, Francesco},
  year = {2019},
  pages={15236--15245}
}

@article{duchiStochasticMethodsComposite2018,
  title={Stochastic methods for composite and weakly convex optimization problems},
  author={Duchi, John C and Ruan, Feng},
  journal={SIAM Journal on Optimization},
  volume={28},
  number={4},
  pages={3229--3259},
  year={2018},
  publisher={SIAM}
}

@inproceedings{L2H2009,
author = {Kulis, Brian and Darrell, Trevor},
title = {Learning to hash with binary reconstructive embeddings},
year = {2009},
booktitle = {Advances in Neural Information Processing Systems (NeurIPS)},
pages = {1042--1050},
}

@article{Ghadimi2013MinibatchSA,
  title={Mini-batch stochastic approximation methods for nonconvex stochastic composite optimization},
  author={Saeed Ghadimi and Guanghui Lan and Hongchao Zhang},
  journal={Mathematical Programming},
  year={2013},
  volume={155},
  pages={267 - 305},
doi = {10.1007/s10107-014-0846-1}
}

@article{guoPreconditionedPrimalDualGradient2023,  
  title = {Preconditioned primal-dual gradient methods for nonconvex composite and finite-sum optimization},  
  author = {Jiahong Guo and Xiao Wang and Xiantao Xiao},  
  year = {2023},  
  journal = {arXiv preprint arXiv:2309.13416}
}

@article{Hertrich2020InertialSP,
  title={Inertial stochastic PALM and applications in machine learning},
  author={Hertrich, Johannes and Steidl, Gabriele},
  journal={Sampling Theory, Signal Processing, and Data Analysis},
  volume={20},
  number={1},
  pages={4},
  year={2022},
  publisher={Springer}
}

@INPROCEEDINGS{liuDeepSupervisedHashing,
  author={Liu, Haomiao and Wang, Ruiping and Shan, Shiguang and Chen, Xilin},
  booktitle={Proceedings of the IEEE Conference on Computer Vision and Pattern Recognition (CVPR)}, 
  title={Deep Supervised Hashing for Fast Image Retrieval}, 
  year={2016},
  pages={2064-2072},
  keywords={Binary codes;Training;Semantics;Image retrieval;Quantization (signal);Feature extraction},
  doi={10.1109/CVPR.2016.227}}

@inproceedings{hoeOneLossAll2021,
  title = {One Loss for All: Deep Hashing with a Single Cosine Similarity Based Learning Objective},
  author = {iun Tian Hoe and Kam Woh Ng and Tianyu Zhang and Chee Seng Chan and Yi-Zhe Song and Tao Xiang},
  booktitle = {Advances in Neural Information Processing Systems (NeurIPS)},
  pages = {24286--24298},
  year = {2021},
}

@misc{huangMiniBatchStochasticADMMs2019,
  title = {Mini-{{Batch Stochastic ADMMs}} for {{Nonconvex Nonsmooth Optimization}}},
  author = {Huang, Feihu and Chen, Songcan},
  year = {2019},
  month = jun,
  number = {arXiv:1802.03284},
  eprint = {1802.03284},
  primaryclass = {cs, math, stat},
  publisher = {arXiv},
  keywords = {Computer Science - Machine Learning,Mathematics - Optimization and Control,Statistics - Machine Learning}
}

@article{InertialProximalAlternating,
	author = {Pock, Thomas and Sabach, Shoham},
	doi = {10.1137/16M1064064},
journal = {SIAM J. Imaging Sci.},
	number = {4},
	pages = {1756-1787},
	title = {Inertial Proximal Alternating Linearized Minimization (iPALM) for Nonconvex and Nonsmooth Problems},
	volume = {9},
	year = {2016}}

@inproceedings{johnsonAcceleratingStochasticGradient2013,
  author = {Johnson Rie and Tong Zhang},
  title = {Accelerating Stochastic Gradient Descent using Predictive Variance Reduction},
  booktitle = {Advances in Neural Information Processing Systems (NeurIPS)},
  year = {2013},
  pages = {315--323},
}

@INPROCEEDINGS{laiSimultaneousFeatureLearning2015,
  author={Lai, Hanjiang and Pan, Yan and Ye Liu and Yan, Shuicheng},
  booktitle={Proceedings of the IEEE Conference on Computer Vision and Pattern Recognition (CVPR)}, 
  title={Simultaneous feature learning and hash coding with deep neural networks}, 
  year={2015},
  volume={},
  number={},
  pages={3270-3278},
  doi={10.1109/CVPR.2015.7298947}}

@inproceedings{liFeatureLearningBased2016,
  author = {Li, Wu-Jun and Wang, Sheng and Kang, Wang-Cheng},
  title = {Feature Learning Based Deep Supervised Hashing with Pairwise Labels},
  booktitle = {International Joint Conference on Artificial Intelligence (IJCAI)},
  year = {2016},
  pages = {1711--1717},
  doi = {10.5555/3060832.3060860}
}

@ARTICLE{liDeepSupervisedHashing,
  author={Jiang, Qing-Yuan and Cui, Xue and Li, Wu-Jun},
  journal={IEEE Transactions on Image Processing}, 
  title={Deep Discrete Supervised Hashing}, 
  year={2018},
  volume={27},
  number={12},
  pages={5996-6009},
  doi={10.1109/TIP.2018.2864894}
}

@inproceedings{liuHashingGraphs,
author = {Liu, Wei and Wang, Jun and Kumar, Sanjiv and Chang, Shih-Fu},
title = {Hashing with graphs},
year = {2011},
address = {Madison, WI, USA},
booktitle = {International Conference on Machine Learning (ICML)},
pages = {1--8},
}

@article{luoSurveyDeepHashing2023,
  author = {Xiao Luo and Haixin Wang and Daqing Wu and Chong Chen and Minghua Deng and Jianqiang Huang and Xian-Sheng Hua},
  title = {A survey on deep hashing methods},
  year = {2023},
  publisher = {Association for Computing Machinery},
  volume = {17},
  number = {1},
  issn = {1556-4681},
  doi = {10.1145/3532624},
  journal = {ACM Transactions on Knowledge Discovery from Data},
  month = feb,
  articleno = {15},
  numpages = {50}
}

@article{polyakMethodsSpeedingConvergence1964,
title = {Some methods of speeding up the convergence of iteration methods},
journal = {USSR Computational Mathematics and Mathematical Physics},
volume = {4},
number = {5},
pages = {1-17},
year = {1964},
issn = {0041-5553},
doi = {10.1016/0041-5553(64)90137-5},
author = {B.T. Polyak},
}

@inproceedings{shenSupervisedDiscreteHashing2015,
  author = {Shen, Fumin and Shen, Chunhua and Liu, Wei and Shen, Heng Tao},
  title = {Supervised Discrete Hashing},
  booktitle = {Proceedings of the IEEE Conference on Computer Vision and Pattern Recognition (CVPR)},
  year = {2015},
  pages = {37--45},
  doi = {10.1109/CVPR.2015.7298598}
}

@book{temporary-citekey-11039,
  title = {Perturbation Analysis of Optimization Problems},
  author = {Bonnans, J Fr{\'e}d{\'e}ric and Shapiro, Alexander},
  year = {2013},
  publisher = {Springer Science \& Business Media}
}

@book{Rockafellar1998VariationalA,  
  title = {Variational Analysis},  
  author = {Tyrrell R Rockafellar and Roger J-B Wets},  
  series = {Grundlehren der mathematischen Wissenschaften},  
  publisher = {Springer-Verlag},  
  year = {1998},  
  isbn = {978-3-540-62772-2},  
  doi = {10.1007/978-3-642-02431-3}  
}

@inproceedings{weissSpectralHashing2008,
  title = {Spectral Hashing},
  author = {Weiss, Yair and Torralba, Antonio and Fergus, Rob},
  booktitle = {Advances in Neural Information Processing Systems (NeurIPS)},
  pages = {1753--1760},
  year = {2008},
}

@inproceedings{xiaSupervisedHashingImage2014,
  title={Supervised hashing for image retrieval via image representation learning},
  author={Xia, Rongkai and Pan, Yan and Lai, Hanjiang and Liu, Cong and Yan, Shuicheng},
  booktitle={AAAI conference on artificial intelligence (AAAI)},
  pages = {2156--2162},
  year={2014}
}

@inproceedings{xuStochasticOptimizationDC2019,
  title={Stochastic optimization for DC functions and non-smooth non-convex regularizers with non-asymptotic convergence},
  author={Xu, Yi and Qi, Qi and Lin, Qihang and Jin, Rong and Yang, Tianbao},
  booktitle={International Conference on Machine Learning (ICML)},
  pages={6942--6951},
  year={2019},
}

@inproceedings{zhuDeepHashingNetwork2016,  
  author = {Han Zhu and Mingsheng Long and Jianmin Wang and Yue Cao},  
  title = {Deep hashing network for efficient similarity retrieval},  
  booktitle = {AAAI Conference on Artificial Intelligence (AAAI)},  
  year = {2016},  
  pages = {2415--2421},  
  doi = {10.5555/3016100.3016236}  
}

@article{boct2020proximal,
  title={The proximal alternating direction method of multipliers in the nonconvex setting: convergence analysis and rates},
  author={Bo{\c{t}}, Radu Ioan and Nguyen, Dang-Khoa},
  journal={Mathematics of Operations Research},
  volume={45},
  number={2},
  pages={682--712},
  year={2020},
  publisher={INFORMS}
}

@article{xu2023momentum,
  title={Momentum-based variance-reduced proximal stochastic gradient method for composite nonconvex stochastic optimization},
  author={Xu, Yangyang and Xu, Yibo},
  journal={Journal of Optimization Theory and Applications},
  volume={196},
  number={1},
  pages={266--297},
  year={2023},
  publisher={Springer}
}

@article{arjevani2023lower,
  title={Lower bounds for non-convex stochastic optimization},
  author={Arjevani, Yossi and Carmon, Yair and Duchi, John C and Foster, Dylan J and Srebro, Nathan and Woodworth, Blake},
  journal={Mathematical Programming},
  volume={199},
  number={1},
  pages={165--214},
  year={2023},
  publisher={Springer}
}

@inproceedings{Liu2020AnIA,
 author = {Yanli Liu and Yuan Gao and Wotao Yin},
 booktitle = {Advances in Neural Information Processing Systems (NeurIPS)},
 editor = {H. Larochelle and M. Ranzato and R. Hadsell and M.F. Balcan and H. Lin},
 pages = {18261--18271},
 title = {An Improved Analysis of Stochastic Gradient Descent with Momentum},
 year = {2020}
}

@inproceedings{sutskever2013importance,
  title={On the importance of initialization and momentum in deep learning},
  author={Ilya Sutskever and James Martens and George Dahl and Geoffrey Hinton},  
  booktitle={International Conference on Machine Learning (ICML)},
  pages={1139--1147},
  year={2013},
}

@book{nocedal1999numerical,
  title={Numerical optimization},
  author={Nocedal, Jorge and Wright, Stephen J},
  year={1999},
  publisher={Springer}
}

@inproceedings{levy2021storm+,
  title = {Storm+: Fully Adaptive SGD with Recursive Momentum for Nonconvex Optimization},
  author = {Levy, Kfir and Kavis, Ali and Cevher, Volkan},
  booktitle = {Advances in Neural Information Processing Systems (NeurIPS)},
  pages = {20571--20582},
  year = {2021},
}

@inproceedings{wang2023deep,
  title={Deep hashing with minimal-distance-separated hash centers},
  author={Wang, Liangdao and Pan, Yan and Liu, Cong and Lai, Hanjiang and Yin, Jian and Liu, Ye},
  booktitle={Proceedings of the IEEE/CVF Conference on Computer Vision and Pattern Recognition (CVPR)},
  pages={23455--23464},
  year={2023}
}

@inproceedings{wang2019spiderboost,
  title = {SpiderBoost and Momentum: Faster Variance Reduction Algorithms},
  author = {Wang, Zhe and Ji, Kaiyi and Zhou, Yi and Liang, Yingbin and Tarokh, Vahid},
  booktitle = {Advances in Neural Information Processing Systems (NeurIPS)},
  pages = {1--8},
  year = {2019},
}

@article{jin2022stochastic,
  title={A stochastic primal-dual method for a class of nonconvex constrained optimization},
  author={Jin, Lingzi and Wang, Xiao},
  journal={Computational Optimization and Applications},
  volume={83},
  number={1},
  pages={143--180},
  year={2022},
  publisher={Springer}
}

@article{shi2025momentum,
  title={A momentum-based linearized augmented Lagrangian method for nonconvex constrained stochastic optimization},
  author={Shi, Qiankun and Wang, Xiao and Wang, Hao},
  journal={Mathematics of Operations Research},
  year={2025},
  publisher={INFORMS}
}

@article{tran2022hybrid,
  title={A hybrid stochastic optimization framework for composite nonconvex optimization},
  author={Tran-Dinh, Quoc and Pham, Nhan H and Phan, Dzung T and Nguyen, Lam M},
  journal={Mathematical Programming},
  volume={191},
  number={2},
  pages={1005--1071},
  year={2022},
  publisher={Springer}
}

@article{zhang2022stochastic,
  title={Stochastic variance-reduced prox-linear algorithms for nonconvex composite optimization},
  author={Zhang, Junyu and Xiao, Lin},
  journal={Mathematical Programming},
  pages={1--43},
  year={2022},
  publisher={Springer}
}

@inproceedings{norouzi2012fast,
  title={Fast search in hamming space with multi-index hashing},
  author={Norouzi, Mohammad and Punjani, Ali and Fleet, David J},
  booktitle={Proceedings of the IEEE Conference on Computer Vision and Pattern Recognition (CVPR)},
  pages={3108--3115},
  year={2012},
}

@Inproceedings{Bai2018ProxQuantQN,
 author = {Yu Bai and Yu-Xiang Wang and Edo Liberty},
 booktitle = {International Conference on Learning Representations (ICLR)},
 title = {ProxQuant: Quantized Neural Networks via Proximal Operators},
 year = {2018}
}

@inproceedings{Xu2019NonasymptoticAO,
  author = {Xu, Yi and Jin, Rong and Yang, Tianbao},
  title = {Non-asymptotic Analysis of Stochastic Methods for Non-Smooth Non-Convex Regularized Problems},
  booktitle = {Advances in Neural Information Processing Systems (NeurIPS)},
  year = {2019},
  pages = {6942--6951},
}

@inproceedings{yun2021adaptive,
  title={Adaptive proximal gradient methods for structured neural networks},
  author={Jihun Yun and Aurélie C Lozano and Eunho Yang},
  booktitle={Advances in Neural Information Processing Systems (NeurIPS)},
  pages={24365--24378},
  year={2021}
}

@Inproceedings{Kingma2014AdamAM,
 author = {Diederik P. Kingma and Jimmy Ba},
 booktitle = {International Conference on Learning Representations (ICML)},
 title = {Adam: A Method for Stochastic Optimization},
 year = {2014}
}

@article{ADMMNolinear2024,
  author = {Daoli Zhu and Lei Zhao and Shuzhong Zhang},
  title = {A first-order primal-dual method for nonconvex constrained optimization based on the augmented Lagrangian},
  year = {2024},
  volume = {49},
  number = {1},
  journal = {Mathematics of Operations Research},
  pages = {125--150},
  numpages = {26}
}

@article{driggs2022biased,
  title={On biased stochastic gradient estimation},
  author={Driggs, Derek and Liang, Jingwei and Sch{\"o}nlieb, Carola-Bibiane},
  journal={Journal of Machine Learning Research},
  volume={23},
  number={24},
  pages={1--43},
  year={2022}
}

@inproceedings{SARAH2017,
author = {Nguyen, Lam M. and Liu, Jie and Scheinberg, Katya and Tak\'{a}\v{c}, Martin},
title = {SARAH: a novel method for machine learning problems using stochastic recursive gradient},
year = {2017},
booktitle = {International Conference on Machine Learning (ICML)},
pages = {2613--2621},
}

@article{bui2021structured,
  title={Structured sparsity of convolutional neural networks via nonconvex sparse group regularization},
  author={Bui, Kevin and Park, Fredrick and Zhang, Shuai and Qi, Yingyong and Xin, Jack},
  journal={Frontiers in Applied Mathematics and Statistics},
  volume={6},
  pages={529564},
  year={2021},
  publisher={Frontiers Media SA}
}

@article{Parikh2013ProximalA,
  author = {Neal Parikh and Stephen P Boyd},
  journal = {Foundations and Trends in Optimization},
  pages = {127--239},
  title = {Proximal algorithms},
  volume = {1},
  year = {2013}
}

@inproceedings{Yang2020ProxSGD,
title={ProxSGD: Training Structured Neural Networks under Regularization and Constraints},
author={Yang Yang and Yaxiong Yuan and Avraam Chatzimichailidis and Ruud JG van Sloun and Lei Lei and Symeon Chatzinotas},
booktitle={International Conference on Learning Representations (ICLR)},
year={2020},
}

@inproceedings{li2017deep,
  title={Deep supervised discrete hashing},
  author={Li, Qi and Sun, Zhenan and He, Ran and Tan, Tieniu},
  booktitle={Advances in neural information processing systems (NeurIPS)},
  year={2017},
  pages = {2479--2488},
}

@article{tikhonov1943stability,
  title={On the stability of inverse problems},
  author={Andrey Nikolayevich Tikhonov},
  journal={Doklady Akademii Nauk SSSR},
  volume={39},
  number={5},
  pages={195--198},
  year={1943}
}

@article{tibshirani1996regression,
  title={Regression shrinkage and selection via the lasso},
  author={Tibshirani, Robert},
  journal={Journal of the Royal Statistical Society Series B: Statistical Methodology},
  volume={58},
  number={1},
  pages={267--288},
  year={1996},
  publisher={Oxford University Press}
}

@inproceedings{
louizos2018learning,
title={Learning Sparse Neural Networks through $L_0$ Regularization},
author={Christos Louizos and Max Welling and Diederik P. Kingma},
booktitle={International Conference on Learning Representations (ICLR)},
year={2018},
}

@article{hu2023non,
  title={Non-smooth weakly-convex finite-sum coupled compositional optimization},
  author={Hu, Quanqi and Zhu, Dixian and Yang, Tianbao},
  journal={Advances in Neural Information Processing Systems (NeurIPS)},
  pages={5348--5403},
  year={2023}
}

@article{Polyak1964,
  author = {B. T. Polyak},
  year = {1964},
  title = {Some methods of speeding up the convergence of iteration methods},
  journal = {USSR Computational Mathematics and Mathematical Physics},
}

@article{wang2024complexity,
  title={Complexity of Finite-Sum Optimization with Nonsmooth Composite Functions and Non-Lipschitz Regularization},
  author={Wang, Xiao and Chen, Xiaojun},
  journal={SIAM Journal on Optimization},
  volume={34},
  number={3},
  pages={2472--2502},
  year={2024},
  publisher={SIAM}
}

@article{tieleman2012lecture,
  author={Tijmen Tieleman and Geoffrey Hinton},
  title={Lecture 6.5-rmsprop: Divide the gradient by a running average of its recent magnitude},
  journal={COURSERA: Neural networks for machine learning},
  volume={4},
  number={2},
  pages={26},
  year={2012}
}

@article{davis2020stochastic,
  title={Stochastic subgradient method converges on tame functions},
  author={Davis, Damek and Drusvyatskiy, Dmitriy and Kakade, Sham and Lee, Jason D},
  journal={Foundations of Computational Mathematics},
  volume={20},
  number={1},
  pages={119--154},
  year={2020},
  publisher={Springer}
}

@inproceedings{BinaryConnect,
author = {Courbariaux, Matthieu and Bengio, Yoshua and David, Jean-Pierre},
title = {BinaryConnect: training deep neural networks with binary weights during propagations},
year = {2015},
booktitle = {Advances in Neural Information Processing Systems (NeurIPS)},
pages = {3123–3131},
}

@inproceedings{wen2016learning,
  title={Learning structured sparsity in deep neural networks},
  author={Wei, Wen and Chunpeng, Wu and Yandan, Wang and Yiran  Chen and Hai, Li},
year = {2016},
booktitle = {Advances in Neural Information Processing Systems (NeurIPS)},
pages = {2082--2090},
}

@inproceedings{Yang2019QuantizationN,
 author = {Jiwei Yang and Xu Shen and Jun Xing and Xinmei Tian and Houqiang Li and Bing Deng and Jianqiang Huang and Xiansheng Hua},
 booktitle = {Proceedings of the IEEE/CVF Conference on Computer Vision and Pattern Recognition (CVPR)},
 pages = {7300-7308},
 title = {Quantization Networks},
 year = {2019}
}

@article{NCR2018Survey,
  author={Fei Wen and Lei Chu and Peilin Liu and Robert C Qiu},
  journal={IEEE Access}, 
  title={A survey on nonconvex regularization-based sparse and low-rank recovery in signal processing, statistics, and machine learning}, 
  year={2018},
  volume={6},
  pages={69883--69906},
  doi={10.1109/ACCESS.2018.2880454}
}

@article{fotopoulos2024review,
  title={Review Non-convex Optimization Method for Machine Learning},
  author={Fotopoulos, Greg B and Popovich, Paul and Papadopoulos, Nicholas Hall},
  journal={arXiv preprint arXiv:2410.02017},
  year={2024}
}

@article{Dropout2014,
author = {Srivastava, Nitish and Hinton, Geoffrey and Krizhevsky, Alex and Sutskever, Ilya and Salakhutdinov, Ruslan},
title = {Dropout: a simple way to prevent neural networks from overfitting},
year = {2014},
volume = {15},
number = {1},
issn = {1532-4435},
journal = {Journal of Machine Learning Research},
pages = {1929--1958},
}

@ARTICLE{xu2012l,
  author={Xu, Zongben and Chang, Xiangyu and Xu, Fengmin and Zhang, Hai},
  journal={IEEE Transactions on Neural Networks and Learning Systems}, 
  title={ $L_{1/2}$ Regularization: A Thresholding Representation Theory and a Fast Solver}, 
  year={2012},
  volume={23},
  number={7},
  pages={1013-1027},
  doi={10.1109/TNNLS.2012.2197412}}

@inproceedings{bai2019proxquant,
  title={ProxQuant: Quantized Neural Networks via Proximal Operators},
  author={Bai, Yu and Wang, Yu-Xiang and Liberty, Edo},
  booktitle={International Conference on Learning Representations (ICLR)},
  year={2019}
}

@inproceedings{wang2016DTSH,
  title={Deep supervised hashing with triplet labels},
  author={Wang, Xiaofang and Shi, Yi and Kitani, Kris M},
  booktitle={Asian Conference on Computer Vision (ACCV)},
  pages={70--84},
  year={2016},
}

@article{he20121,
  title={On the $\mathcal{O}(1/n)$ Convergence Rate of the Douglas-Rachford Alternating Direction Method},
  author={He, Bingsheng and Yuan, Xiaoming},
  journal={SIAM Journal on Numerical Analysis},
  volume={50},
  number={2},
  pages={700--709},
  year={2012},
  publisher={SIAM}
}

@article{gabay1976dual,
  title={A dual algorithm for the solution of nonlinear variational problems via finite element approximation},
  author={Gabay, Daniel and Mercier, Bertrand},
  journal={Computers \& Mathematics with Applications},
  volume={2},
  number={1},
  pages={17--40},
  year={1976},
  publisher={Elsevier}
}

@article{monteiro2013iteration,
  title={Iteration-complexity of block-decomposition algorithms and the alternating direction method of multipliers},
  author={Monteiro, Renato DC and Svaiter, Benar F},
  journal={SIAM Journal on Optimization},
  volume={23},
  number={1},
  pages={475--507},
  year={2013},
  publisher={SIAM}
}

@article{li2015global,
  title={Global convergence of splitting methods for nonconvex composite optimization},
  author={Li, Guoyin and Pong, Ting Kei},
  journal={SIAM Journal on Optimization},
  volume={25},
  number={4},
  pages={2434--2460},
  year={2015},
  publisher={SIAM}
}

@article{hong2016convergence,
  title={Convergence analysis of alternating direction method of multipliers for a family of nonconvex problems},
  author={Hong, Mingyi and Luo, Zhi-Quan and Razaviyayn, Meisam},
  journal={SIAM Journal on Optimization},
  volume={26},
  number={1},
  pages={337--364},
  year={2016},
  publisher={SIAM}
}

@article{Boct2019SIOPT,
author = {Bo\c{t}, Radu Ioan and Csetnek, Erno Robert and Nguyen, Dang-Khoa},
title = {A Proximal Minimization Algorithm for Structured Nonconvex and Nonsmooth Problems},
journal = {SIAM Journal on Optimization},
year = {2019}, 
volume = {29},
number = {2},
pages = {1300-1328}}

@inproceedings{taylor2016training,
  title={Training neural networks without gradients: A scalable admm approach},
  author={Taylor, Gavin and Burmeister, Ryan and Xu, Zheng and Singh, Bharat and Patel, Ankit and Goldstein, Tom},
  booktitle={International Conference on Machine Learning (ICML)},
  pages={2722--2731},
  year={2016},
}

@inproceedings{huang2019faster,
  title={Faster stochastic alternating direction method of multipliers for nonconvex optimization},
  author={Huang, Feihu and Chen, Songcan and Huang, Heng},
  booktitle={International conference on machine learning},
  pages={2839--2848},
  year={2019},
  organization={PMLR}
}

@article{lu2024variancereducedfirstordermethodsdeterministically,
      title={Variance-reduced first-order methods for deterministically constrained stochastic nonconvex optimization with strong convergence guarantees}, 
      author={Zhaosong Lu and Sanyou Mei and Yifeng Xiao},
      year={2024},
     journal = {arXiv preprint arXiv:2409.09906}

}

\clearpage
\section*{Supplementary Material A}



\renewcommand{\thesection}{S.\Roman{section}}
\renewcommand{\thesubsection}{\thesection.\Alph{subsection}}
\renewcommand{\theequation}{S\arabic{equation}}
\setcounter{page}{1}
\setcounter{section}{0}



\markboth{}%
{Li \textit{et al.}: DualHash --- Supplementary Material A}
This supplementary material provides detailed experimental settings and additional results.




\section{Experiments details}
\label{Appendix: Experiment details} 
\subsection{Deep Pairwise Supervised Hashing Formulation}
\label{Appendix: hashing formulation}
\begin{table}[thbp]
    \centering
    \caption{Summary of symbols and notations.}
    \label{Table:notations_for_deep_hashing}
    \small
    \begin{tabular}{ll}
        \toprule 
        \textbf{Symbol} & \textbf{Description} \\
        \midrule 
        $\ba_{i}$, $\by_{i}$ & Input feature and label vector for \\
                             & $i$-th sample \\
        $s_{ij}^{o}$, $s_{ij}^{h}$ & Similarity in input space and \\
                                   & Hamming space \\
        $d_{ij}^{o}$, $d_{ij}^{h}$ & Distance in input space and \\
                                   & Hamming space \\
        $\bC(\bx; \ba_{i})$ & Network output before activation \\
        $\bb_{i}\in \{-1, 1\}^{d}$ & Binary hash code for $i$-th sample \\
        $\bu_{i}= \tanh(\bC(\bx; \ba_{i}))$ & Continuous code after activation \\
        $n$, $d$ & Number of samples and hash \\
                 & code length \\
        $\bx$ & Network parameters \\
        $d_{\ba}$, $d_{x}$ & Dimension of input feature and \\
                           & parameters \\
        \bottomrule
    \end{tabular}
\end{table}
We consider deep supervised pairwise hashing in our experiments, which aims to learn binary codes that minimize Hamming distances for similar pairs and maximize them for dissimilar pairs. Let $\bA = \{\ba_i \in \RR^{d_a} : i = 1, \ldots, n\}$ be the training set with labels $\calY = \{\by_i \in \{0,1\}^C\}$ (one-hot for single-label, multi-hot for multi-label). Key notations are in \Cref{Table:notations_for_deep_hashing}.

\vspace{3pt}
\noindent\textbf{Network Architecture.}  
A CNN extracts features from $\ba_i$, followed by a fully connected hash layer producing $d$-dimensional outputs $\bC(\bx; \ba_i)$, which are passed through $\tanh$ to yield $\bu_i$.

\vspace{3pt}
\noindent\textbf{Loss Function.}  
For hash codes in $\{-1,1\}^d$, the Hamming distance is $d_{ij}^h = \tfrac{1}{2}(d - \langle \bh_i, \bh_j \rangle)$. Following \cite{xiaSupervisedHashingImage2014,laiSimultaneousFeatureLearning2015}, similarity $s^o_{ij} = 1$ if samples share labels, else $0$. The likelihood is:
\[
P(s^o_{ij} \mid \bh_i, \bh_j) = \sigma(\langle \bh_i, \bh_j \rangle)^{s^o_{ij}} \left(1 - \sigma(\langle \bh_i, \bh_j \rangle)\right)^{1 - s^o_{ij}},
\]
where $\sigma(x) = (1 + \exp(-\alpha x))^{-1}$, $\alpha < 1$. Using continuous codes $\bu_i$ and $s^h_{ij} = \tfrac{1}{2} \langle \bu_i, \bu_j \rangle$, the loss becomes:
\begin{align}
\label{Eq: the pairwise CE loss}
\min_{\bx} \; \calL_S(\bx) = -\frac{1}{|\calS|} \sum_{s^o_{ij} \in \calS} \log\left(1 + \exp\left(-\alpha s^o_{ij} s^h_{ij}\right)\right).
\end{align}

We use the W-type regularization:
\begin{align}
\label{Eq: the W-type regularization}
h(\bZ) = \sum_{i=1}^n h(\bz_i), \quad h(\bz) = \lambda \|\,|\bz| - \bm{1}\,\|_1,
\end{align}
where $\lambda > 0$ controls regularization strength.

Combining \eqref{Eq: the pairwise CE loss} and \eqref{Eq: the W-type regularization} into \eqref{Prop: block-wise regularized problem} gives the final loss of DualHash in our experiments.
\begin{align}
\label{Eq: the pairwise quantization-based hashing}
\min_{\bx \in \calX, \bB \in \calB} \; 
&-\frac{1}{|\calS|} \sum_{s^o_{ij} \in \calS} \log\left(1 + \exp\left(-\alpha s^o_{ij} s^h_{ij}\right)\right) \notag \\
&+ \frac{\gamma}{2n}\sum_{i=1}^n \| \bb_i - \bu_i\|_2^2 
+ \sum_{i=1}^n \lambda \|\,|\bb_i| - \bm{1}\,\|_1.
\end{align}

\vspace{3pt}
\noindent\textbf{Out-of-sample Extension.}  
For an unseen query $\ba_q \notin \bA$, its binary code is predicted as:
\[
\bb_q = \operatorname{sgn}(\bC(\bx; \ba_q)).
\]



\subsection{Details of experiment implementation}
\label{Appendix: Details of experiment implementation}

\vspace{3pt}
\noindent\textbf{Baseline methods}
We compare our method with eight baseline approaches, including one traditional discrete hashing method and seven deep supervised hashing methods:
\begin{enumerate}[label=(\roman{*}),leftmargin=2em]
    \item \textbf{SDH}~\cite{shenSupervisedDiscreteHashing2015}: A pointwise supervised hashing method using hand-crafted features. It employs discrete cyclic coordinate descent to directly optimize binary codes.
    
    \item \textbf{DHN}~\cite{zhuDeepHashingNetwork2016}: A Bayesian framework that minimizes pairwise cross-entropy loss and quantization error using $\tanh$ relaxation and $\sum_i\log(\cosh(|z_i|-1))$ regularization.
    
    \item \textbf{DSH}~\cite{liuDeepSupervisedHashing}: Uses max-min pairwise loss with $\||\bz|-\bm{1}\|_1$ regularization to minimize quantization error.
    
    \item \textbf{DTSH}~\cite{wang2016DTSH}: Learns hash codes by maximizing triplet likelihood with negative log-triplet loss, using  $\||\bz|-\bm{1}\|^2_2$  quantization regularization and a positive margin $\alpha$ to accelerate training and normalize Hamming distance gaps.
    
    \item \textbf{HashNet}~\cite{Cao2017HashNetDL}: Progressively increases $\beta$ in $\tanh(\beta x)$ to approximate the $\text{sgn}$ function, addressing nonconvex optimization challenges.
    
    \item \textbf{DSDH}~\cite{li2017deep}: Integrates pairwise labels and classification information using alternating minimization to learn features and binary codes with the $\text{sgn}$ function.
    
    \item \textbf{OrthoHash}~\cite{hoeOneLossAll2021}: Uses a single cosine similarity loss to maximize alignment between continuous codes and binary orthogonal codes, ensuring discriminativeness with minimal quantization error.
    
    \item \textbf{MDSHC}~\cite{wang2023deep}: A two-stage method that first generates hash centers with minimal Hamming distance constraints, then uses three-part losses with $\||\bz|-\bm{1}\|_1$ regularization to reduce quantization error.
\end{enumerate}

\vspace{3pt}
\noindent\textbf{Evaluation Protocols}
Following \cite{liuDeepSupervisedHashing}, we adopt three evaluation metrics:

\begin{enumerate}[label=\textup{(\roman{*})}, leftmargin=2em]
    \item \textit{Mean Average Precision (mAP)}: For a query $\bx_q$, the Average Precision is computed as:
    \[
        \text{AP}(\bx_q) = \frac{\sum_{k=1}^{N} P(k) \cdot \mathbb{I}_{\text{rel}}(k)}{\min(N, R_q)},
    \]
    where $P(k)$ is precision at rank $k$, $\mathbb{I}_{\text{rel}}(k)$ indicates whether the $k$-th result is relevant, $R_q$ is the total relevant samples, and $N$ is the number of retrieved items. mAP averages AP over all queries. For CIFAR-10, we use all returned neighbors; for NUS-WIDE, we consider top-5000 results.
    
    \item \textit{Precision at Top K (AP@topK)}: The proportion of true neighbors among top-K retrieved results, averaged over all queries.
    
    \item \textit{Precision within Hamming Radius 2 (AP@r)}: Measures retrieval accuracy for items within Hamming distance 2 from the query. This metric is particularly efficient as Hamming radius search has $\mathcal{O}(1)$ time complexity per query.
    
    \item \textit{Precision-recall curves of Hamming ranking}: PR curves illustrate
        precision at various levels of recall, offering insights into the trade-off between precision and recall at different thresholds. These curves are widely
        recognized as effective indicators of the robustness and overall performance
        of different algorithms.
\end{enumerate}

\section{Additional Experiments}
\subsection{Comparison of Optimization Algorithms with Different W-Type Regularizations}
\begin{table*}[!t]
    \centering
    \caption{Configurations of different W-type regularizations and optimization algorithms \\ for DualHash-StoM and three baselines.}
    \label{tab:4_algorithms_for_hash_regularization_and_update}
    \resizebox{0.93\textwidth}{!}{
    \begin{tabular*}{1\textwidth}{@{\extracolsep{\fill}}lcccc@{}}
        \toprule 
        Method & $h(\bz)$ & Convexity & Algorithm & Complexity \\
        \midrule 
        DSH \cite{liuDeepSupervisedHashing} & $\| |\bz| - \mathbf{1}\|_{1}$ & Nonconvex & SGDM & -- \\
        StoMHash-WCR & $\| \bz^{2}- \mathbf{1}\|_{1}$ & Weakly convex & SPGD & -- \\
        DHN \cite{zhuDeepHashingNetwork2016} & $\sum_{i}\log(\cosh(|z_{i}| - 1))$ & Nonconvex & SGDM & -- \\
        \textbf{DualHash-StoM} & $\| |\bz| - \mathbf{1}\|_{1}$ & Nonconvex & PD & $\checkmark$ \\
        \bottomrule
    \end{tabular*}}
    \vspace{-3pt}
    \parbox{0.93\textwidth}{\footnotesize
    \textbf{Notes}: \textbf{Algorithm} describes the optimization strategy: SGDM (stochastic subgradient descent with momentum), SPGD (stochastic proximal gradient descent), PD (primal-dual). \textbf{Complexity} reports oracle complexity; `--' indicates no analysis available.
    }
\end{table*}
To validate the effectiveness of our primal-dual framework, we compare DualHash-StoM
against optimization algorithms with different W-type regularizations. \Cref{tab:4_algorithms_for_hash_regularization_and_update}
summarizes the configurations of different regularization functions and their corresponding optimization approaches. For StoMHash-WCR, we employ the regularization $\lambda \| \bz^{2}- \mathbf{1}\|_1$
following the iSPALM framework \cite{InertialProximalAlternating}. The proximal
operator is computed with parameter $\rho > 0$ under the condition
$\frac{\rho}{\eta}> \frac{1}{2}$, which ensures the subproblem is strongly convex
with a unique solution.
\begin{figure*}[htbp]
    \centering
    \includegraphics[width=0.8\textwidth]{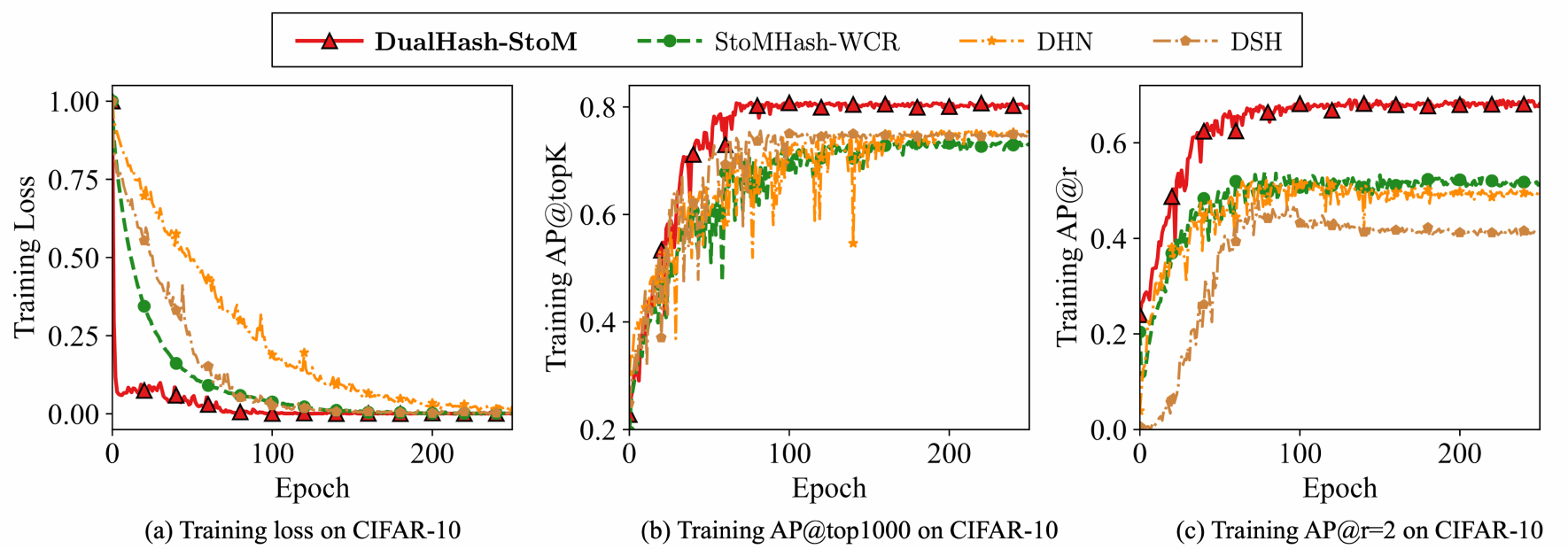}
    \caption{Training performance comparison of DualHash-StoM and three baselines with 64 bits on the CIFAR-10 dataset.}
    \label{Fig: Comparison of training performance of Different Methods on CIFAR-10}
\end{figure*}
\begin{figure*}[htbp]
    \centering
    \includegraphics[width=0.8\textwidth]{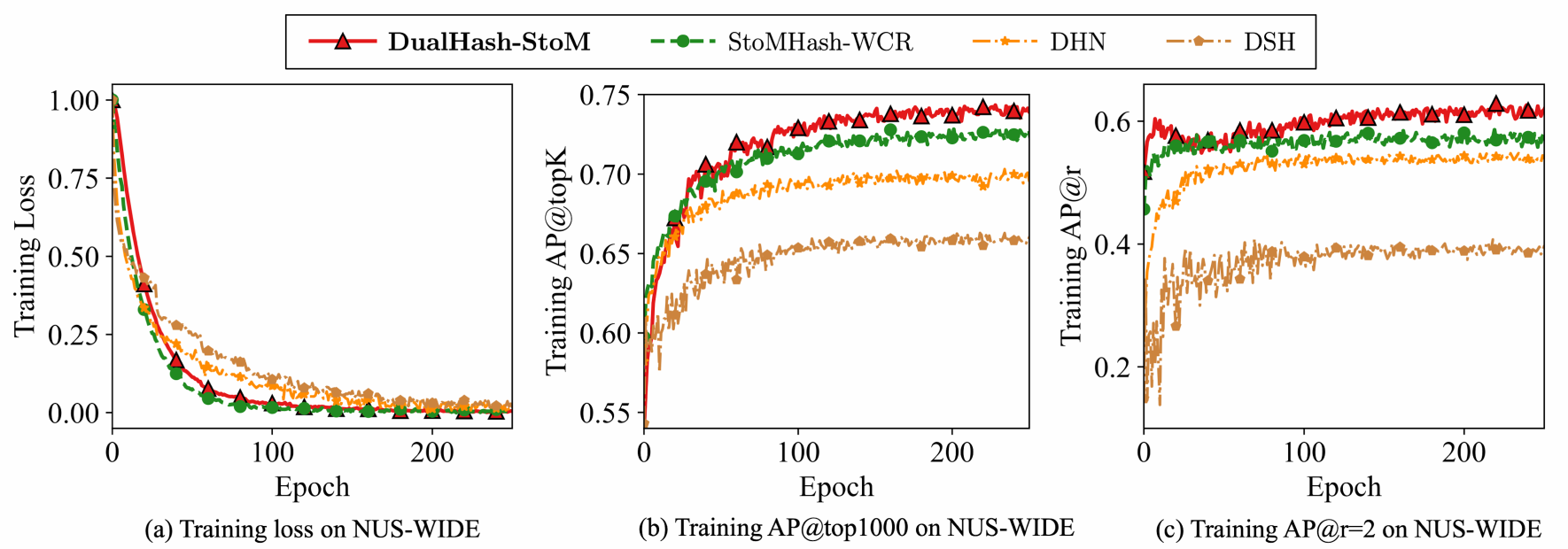}
    \caption{Training performance comparison of DualHash-StoM and three baselines with 64 bits on the NUS-WIDE dataset.}
    \label{Fig: Comparison of training performance of Different Methods on NUS-WIDE}
\end{figure*}

\vspace{3pt}
\noindent\textbf{Training performance analysis}
\begin{table*}[htbp]
    \centering
    \caption{Retrieval performance comparison of DualHash-StoM and three baselines with 64 bits \\ on CIFAR-10 and NUS-WIDE datasets}
    \label{tab:performance_comparison_64bits}
    \resizebox{0.93\textwidth}{!}{
    \begin{tabular*}{1\textwidth}{@{\extracolsep{\fill}}lcccccc@{}}
        \toprule 
        \multirow{2}{*}{Methods} & \multicolumn{3}{c}{CIFAR-10} & \multicolumn{3}{c}{NUS-WIDE} \\
        \cmidrule(lr){2-4} \cmidrule(lr){5-7}
         & mAP@All & AP@r & Time\textsuperscript{*} & mAP@5000 & AP@r & Time\textsuperscript{*} \\
        \midrule 
        DSH \cite{liuDeepSupervisedHashing} & 0.8010 & 0.4401 & 7.38 & 0.6634 & 0.4004 & 21.14 \\
        DHN \cite{zhuDeepHashingNetwork2016} & \textcolor{blue}{0.8051} & 0.5272 & \textcolor{red}{\textbf{6.74}} & 0.7042 & 0.5432 & \textcolor{red}{\textbf{16.47}} \\
        StoMHash-WCR & 0.7773 & \textcolor{blue}{0.5402} & 7.04 & \textcolor{blue}{0.7225} & \textcolor{blue}{0.5857} & \textcolor{blue}{16.69} \\
        \textbf{DualHash-StoM} & \textcolor{red}{\textbf{0.8539}} & \textcolor{red}{\textbf{0.6872}} & \textcolor{blue}{6.86} & \textcolor{red}{\textbf{0.7448}} & \textcolor{red}{\textbf{0.6548}} & 16.83 \\
        \bottomrule
    \end{tabular*}}
    \vspace{3pt}
    \parbox{0.93\textwidth}{\footnotesize
    \textsuperscript{*}Training time per epoch (seconds). \textbf{\textcolor{red}{Red}} and \textcolor{blue}{blue} indicate highest and second highest values.
    }
\end{table*}
Figures \ref{Fig: Comparison of training performance of Different Methods on CIFAR-10} and \ref{Fig: Comparison of training performance of Different Methods on NUS-WIDE} present
training dynamics across different optimization approaches, comparing training loss, AP@top1000, and AP@r=2 metrics \textbf{with 64 bits} on both datasets. On
CIFAR-10, DualHash-StoM demonstrates superior optimization efficiency across all
metrics. Our method achieves the fastest convergence in training loss while consistently maintaining the highest AP@r and AP@top1000 values throughout the
optimization process. While StoMHash with weakly convex regularization (StoMHash-WCR)
shows competitive performance, DualHash-StoM achieves both faster convergence
and consistently higher performance, demonstrating the effectiveness of our
primal-dual transformation for handling W-type regularizations. On the more challenging NUS-WIDE dataset with complex multi-label scenarios, DualHash-StoM maintains
competitive retrieval performance despite slightly slower initial convergence.
The experimental results on diverse datasets confirm our core motivation:
utilizing nonconvex nonsmooth regularization with specialized primal-dual
optimization significantly improves hash code learning and retrieval
performance in practical applications.

\noindent\textbf{Retrieval performance in hamming space}
We report mAP, AP@r, and the average time per training epoch on both datasets, all
with 64-bit codes, for different methods in
Table~\ref{tab:performance_comparison_64bits}. The results highlight the superior retrieval
accuracy of DualHash-StoM compared to the baseline methods.

Specifically, on the CIFAR-10 dataset, DualHash-StoM achieves 9.9\% and 27.2\%
improvements over StoMHash-WCR in mAP@All and AP@r, respectively. Similarly, on the
NUS-WIDE dataset, DualHash-StoM outperforms StoMHash-WCR with a 3.1\% improvement
in mAP@5000 and an 11.8\% improvement in AP@r. Computationally, DualHash-StoM shows comparable efficiency to baselines such as DHN
(e.g., 6.86s vs. 6.74s on CIFAR-10 and 16.83s vs. 16.47s
on NUS-WIDE). This marginal time increase is justified by the substantial performance gains.
\begin{figure*}[!t]
    \centering
    \includegraphics[width=0.95\textwidth]{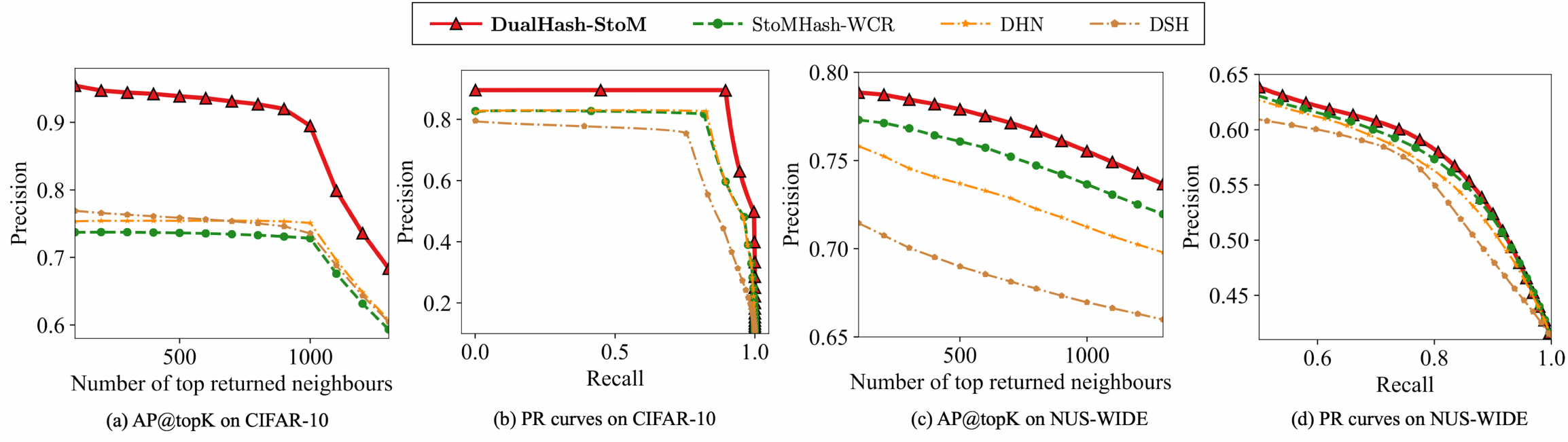}
    \caption{Retrieval performance comparison of DualHash-StoM and three baselines with 64 bits on CIFAR-10 and NUS-WIDE datasets. (a), (c): AP@topK results;  (b), (d): PR curves.}
    \label{fig:ex3_testing_2datasets}
    \vspace{-1ex}
\end{figure*}
Furthermore, \Cref{fig:ex3_testing_2datasets} displays AP@topK results and PR curves
with 64-bit codes on both datasets. DualHash-StoM consistently outperforms all
comparison methods by large margins in AP@topK. For PR curves, DualHash-StoM
demonstrates superior performance at lower recall levels, making it especially well-suited
for precision-critical image retrieval systems.

\clearpage
\onecolumn 
\section*{Supplementary Material B}
This supplementary material provides useful definitions and propositions, necessary lemmas with detailed proofs, and the complete derivations for the main theoretical results and parameters presented in the primary work.
\section{Preliminaries}
\label{Appendix: Preliminaries and notations}
The proximal mapping admits the following
properties.
\begin{proposition}[Properties of proximal operators
\cite{Rockafellar1998VariationalA}]
    Let $f : \mathbb{R}^{q}\rightarrow \mathbb{R}\cup \{+\infty\}$ be proper and
    closed. Then, the following holds.
    \begin{enumerate}[label=(\roman{*})]
        \item The set $\operatorname{prox}_{\tau f}(\bx)$ is nonempty and compact
            for any $\bx \in \mathbb{R}^{q}$ and $\tau > 0$.

        \item If $f$ is convex, then $\operatorname{prox}_{\tau f}(\bx)$
            contains exactly one value for any $\bx \in \mathbb{R}^{q}$ and
            $\tau > 0$.
    \end{enumerate}
\end{proposition}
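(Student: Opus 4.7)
The plan is to reduce both parts to the Weierstrass extreme value theorem applied to the objective
\[
g(\bx) := f(\bx) + \frac{1}{2\tau}\|\bx - \by\|^2.
\]
For part (i), I would verify three properties of $g$ in sequence. First, $g$ is proper: its effective domain equals $\dom(f)$, which is nonempty since $f$ is proper; and using the paper's convention that proper includes $\inf f > -\infty$, the bound $g(\bx) \geq f(\bx) \geq \inf f$ ensures $g$ never takes $-\infty$ and is bounded below. Second, $g$ is closed (l.s.c.) as the sum of the l.s.c. function $f$ and the continuous quadratic $\frac{1}{2\tau}\|\cdot - \by\|^2$. Third, $g$ is coercive: from $g(\bx) \geq \inf f + \frac{1}{2\tau}\|\bx - \by\|^2$, we obtain $g(\bx) \to +\infty$ whenever $\|\bx\| \to +\infty$.

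With these three properties in hand, I would conclude nonemptiness and compactness of $\prox_{\tau f}(\by)$ as follows. Fix any $\bx_0 \in \dom(f)$ and set $\alpha := g(\bx_0) < +\infty$. The sublevel set $S_\alpha := \{\bx : g(\bx) \leq \alpha\}$ is closed by l.s.c., bounded by coercivity, hence compact, and contains $\bx_0$. Since any minimizer of $g$ lies in $S_\alpha$, we may restrict the minimization there; by the Weierstrass theorem applied to the l.s.c. function $g$ on the compact set $S_\alpha$, the infimum is attained, so $\prox_{\tau f}(\by)$ is nonempty. Furthermore, this argmin coincides with the sublevel set of $g$ at its optimal value, which is a closed subset of the compact set $S_\alpha$, hence itself compact.

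For part (ii), the added convexity of $f$ makes $g$ strongly convex with modulus $1/\tau$, since it is the sum of a convex function and the $1/\tau$-strongly convex quadratic $\frac{1}{2\tau}\|\bx - \by\|^2$. Strong convexity immediately yields uniqueness: if $\bx^1, \bx^2$ were two distinct minimizers, strict convexity along the segment joining them would produce a strictly smaller value of $g$ at the midpoint, contradicting optimality. Combined with nonemptiness from part (i), this yields exactly one element in $\prox_{\tau f}(\by)$. The argument is entirely standard, so no step should present a genuine obstacle; the only subtle point worth flagging is that the paper's nonstandard definition of \emph{proper} (which bakes in a finite lower bound on $f$) is precisely what delivers coercivity of $g$, and one should cite it explicitly rather than invoke a separate ``prox-bounded'' hypothesis.
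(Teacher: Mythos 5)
Your proof is correct. The paper states this proposition as a cited result from Rockafellar--Wets and gives no proof of its own, so there is nothing to diverge from; your argument (Weierstrass on a compact sublevel set of the coercive, l.s.c. objective for (i), strong convexity for (ii)) is the standard one underlying the cited reference. Your closing observation is also the right subtlety to flag: the general Rockafellar--Wets statement requires prox-boundedness of $f$, and it is precisely this paper's nonstandard definition of \emph{proper} (which builds in $\inf f > -\infty$) that substitutes for that hypothesis and makes $g$ coercive for every $\tau > 0$.
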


We introduce the following definitions related to subdifferentials.
\begin{definition}[Subdifferentials \cite{Rockafellar1998VariationalA}]
    \label{definition: subgradient} Let $f : \mathbb{R}^{q}\to (-\infty, +\infty
    ]$ be a proper closed function.
    \begin{enumerate}[label=(\roman{*})]
        \item For a given $\bx \in \operatorname{dom}f$, the Fréchet
            subdifferential, or simply of $f$ at $\bx$, written
            $\hat{\partial}f(\bx)$, is the set of all vectors $\bu \in \mathbb{R}
            ^{q}$ that satisfy
            \[
                \liminf_{\by \neq \bx, \by \to \bx}\frac{f(\by) - f(\bx) -
                \langle \bu, \by - \bx \rangle}{\|\by - \bx\|}\geq 0.
            \]

        \item The limiting-subdifferential, or simply the subdifferential, of $f$
            at $\bx$, written $\partial f(\bx)$, is defined through the following
            closure process
\[
\partial f(\bx) := \{ \bu \in \mathbb{R}^{q}: \exists \bx^{k}\to \bx, f(\bx^{k}) \to f(\bx), \bu^{k}\in \hat{\partial}f(\bx^{k}) \to \bu \text{ as }k \to \infty \}.
\]
    \end{enumerate}
\end{definition}
Notionally, $\hat{\partial}f(\bx) = \partial f(\bx) = \emptyset$ for all $\bx \notin
\operatorname{dom}(f)$. From definition \cref{definition: subgradient} it
follows that $\hat{\partial}f(\bx) \subset \partial f(\bx)$ for all
$\bx \in \operatorname{dom}(f)$. Moreover, $\hat{\partial}f(\bx)$ is closed
convex while $\partial f(\bx)$ is merely closed. In particular, if $f$ is convex,
then
\[
    \hat{\partial}f(\bx) = \partial f(\bx) = \{ \bd \in \mathbb{R}^{q}\mid f(\by
    ) \geq f(x) + \langle \bd, \by - \bx \rangle ,\,\forall \by \in \mathbb{R}
    ^{q}\}.
\]
The following proposition lists several useful properties of subgradients.
\begin{proposition}[Properties of subdifferential \cite{Hertrich2020InertialSP}]
    Let $f : \mathbb{R}^{q_1}\rightarrow (-\infty, \infty]$ and $g : \mathbb{R}^{q_2}
    \rightarrow (-\infty, \infty]$ be proper and lower semicontinuous, and let
    $h : \mathbb{R}^{q_1}\rightarrow \mathbb{R}$ be continuously differentiable.
    Then the following results hold:
    \begin{enumerate}[label=(\roman{*})]
        \item For any $\bx \in \mathbb{R}^{q_1}$, we have
            $\hat{\partial}f(\bx) \subseteq \partial f(\bx)$. If, in addition, $f$
            is convex, we have $\hat{\partial}f(\bx) = \partial f(\bx)$.

        \item For $\bx \in \mathbb{R}^{q_1}$ with $f(\bx) < \infty$, it holds
           \[
\begin{aligned}
    \hat{\partial}(f + h)(\bx) &= \hat{\partial}f(\bx) + \nabla h(\bx) \quad \text{and}\\
    \partial (f + h)(\bx) &= \partial f(\bx) + \nabla h(\bx).
\end{aligned}
\]
        \item If $\sigma(\bx_{1}, \bx_{2}) = f_{1}(\bx_{1}) + f_{2}(\bx_{2})$,
            then
            \[
                \begin{pmatrix}
                    \hat{\partial}_{\bx_1}f_{1}(\bar{\bx}_{1}) \\
                    \hat{\partial}_{\bx_2}f_{2}(\bar{\bx}_{2})
                \end{pmatrix}
                \subseteq \hat{\partial}\sigma(\bar{\bx}_{1}, \bar{\bx}_{2}) \quad
                \text{and}\quad
                \begin{pmatrix}
                    \partial_{\bx_1}f_{1}(\bar{\bx}_{1}) \\
                    \partial_{\bx_2}f_{2}(\bar{\bx}_{2})
                \end{pmatrix}
                \subseteq \partial \sigma(\bar{\bx}_{1}, \bar{\bx}_{2}).
            \]
    \end{enumerate}
\end{proposition}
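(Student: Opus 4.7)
The plan is to establish all three items by working directly from the definition of the Fréchet and limiting subdifferentials. Each part reduces to two steps: a local argument using the Fréchet liminf condition, then a closure argument transferring the result from $\hat\partial$ to $\partial$ via the generating-sequence characterization of the limiting subdifferential.

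For (i), the inclusion $\hat\partial f(\bx) \subseteq \partial f(\bx)$ is immediate: given $\bu \in \hat\partial f(\bx)$, the constant choice $\bx^k \equiv \bx$, $\bu^k \equiv \bu$ trivially witnesses $\bu \in \partial f(\bx)$. For the reverse inclusion under convexity, I would use the standard fact that for a proper l.s.c.\ convex $f$, the Fréchet subdifferential at every point coincides with the global convex subdifferential $\{\bd : f(\by) \geq f(\bx) + \langle \bd, \by - \bx\rangle\ \forall \by\}$. Then for any $\bu \in \partial f(\bx)$ with generating sequences $\bx^k \to \bx$, $f(\bx^k) \to f(\bx)$, and $\bu^k \in \hat\partial f(\bx^k) \to \bu$, the global inequality $f(\by) \geq f(\bx^k) + \langle \bu^k, \by - \bx^k\rangle$ holds for every $\by$ and every $k$; passing to the limit in $k$ yields $f(\by) \geq f(\bx) + \langle \bu, \by - \bx\rangle$, so $\bu \in \hat\partial f(\bx)$.

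For (ii), I use the first-order expansion $h(\by) = h(\bx) + \langle \nabla h(\bx), \by - \bx\rangle + r(\by)$ with $r(\by)/\|\by - \bx\| \to 0$ as $\by \to \bx$. Then
\[
\frac{(f+h)(\by) - (f+h)(\bx) - \langle \bu, \by - \bx\rangle}{\|\by - \bx\|} = \frac{f(\by) - f(\bx) - \langle \bu - \nabla h(\bx), \by - \bx\rangle}{\|\by - \bx\|} + \frac{r(\by)}{\|\by - \bx\|},
\]
so taking the liminf shows $\bu \in \hat\partial(f+h)(\bx)$ iff $\bu - \nabla h(\bx) \in \hat\partial f(\bx)$, yielding $\hat\partial(f+h)(\bx) = \hat\partial f(\bx) + \nabla h(\bx)$. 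For the limiting identity, I would note that continuity of both $h$ and $\nabla h$ ensures $(\bx^k, (f+h)(\bx^k), \bw^k)$ satisfies the generating conditions for $\partial(f+h)$ iff $(\bx^k, f(\bx^k), \bw^k - \nabla h(\bx^k))$ does for $\partial f$, so the two closure processes match under the affine bijection $\bw \leftrightarrow \bw - \nabla h(\bx)$.

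For (iii), given $\bu_i \in \hat\partial f_i(\bar\bx_i)$ for $i=1,2$, separability gives
\[
\sigma(\by_1,\by_2) - \sigma(\bar\bx_1,\bar\bx_2) - \langle (\bu_1,\bu_2),\,(\by_1,\by_2) - (\bar\bx_1,\bar\bx_2)\rangle = \sum_{i=1}^2 \bigl[f_i(\by_i) - f_i(\bar\bx_i) - \langle \bu_i, \by_i - \bar\bx_i\rangle\bigr].
\]
Each bracket is bounded below by $-\varepsilon\|\by_i - \bar\bx_i\|$ for $\by_i$ close to $\bar\bx_i$ by the Fréchet condition (trivially if $\by_i = \bar\bx_i$); dividing by $\|(\by_1,\by_2) - (\bar\bx_1,\bar\bx_2)\|$ and using $\|\by_1 - \bar\bx_1\| + \|\by_2 - \bar\bx_2\| \leq \sqrt{2}\,\|(\by_1 - \bar\bx_1, \by_2 - \bar\bx_2)\|$ gives the joint Fréchet condition, hence the first inclusion. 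The limiting inclusion follows by applying the closure process componentwise: generating sequences $\bx_i^k \to \bar\bx_i$ with $f_i(\bx_i^k) \to f_i(\bar\bx_i)$ and $\bu_i^k \to \bu_i$ combine into a joint sequence that satisfies the analogous conditions for $\sigma$ by the Fréchet inclusion just proved. The main subtlety is the convex equality in (i), where passing the global subgradient inequality to the limit point requires the classical identification of the Fréchet and global subdifferentials for convex functions; once this is granted, all three parts reduce to direct calculation.
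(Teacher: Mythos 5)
Your proposal is correct in all three parts, but note that the paper does not prove this proposition at all --- it is imported verbatim from the cited reference (and these are standard facts from Rockafellar--Wets), so there is no in-paper argument to compare against. Your first-principles verification is sound: the constant-sequence witness for $\hat{\partial}f(\bx) \subseteq \partial f(\bx)$, the cancellation of the $o(\|\by-\bx\|)$ remainder of $h$ in the Fr\'echet quotient for (ii) together with the continuity of $h$ and $\nabla h$ to transport the closure process, and the $\|\by_1-\bar{\bx}_1\|+\|\by_2-\bar{\bx}_2\| \leq \sqrt{2}\,\|(\by_1-\bar{\bx}_1,\by_2-\bar{\bx}_2)\|$ estimate for the separable case all go through. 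The one place where you lean on an external fact is the identification of $\hat{\partial}f$ with the global convex subdifferential in (i); you correctly flag this, and it is itself a short argument (for $\bu \in \hat{\partial}f(\bx)$ and any $\bz$, convexity gives $f(\bx+t(\bz-\bx))-f(\bx) \leq t(f(\bz)-f(\bx))$, and dividing by $t\|\bz-\bx\|$ and letting $t\downarrow 0$ in the Fr\'echet liminf yields the global subgradient inequality), so you could have closed that loop in two lines. A trivial edge case you omit in (iii) is $\bar{\bx}_i \notin \dom f_i$, where both sides are empty by convention; it costs nothing but is worth a remark. Overall your proof supplies a self-contained justification the paper delegates to the literature.
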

For  \Cref{alg:our_algorithm_sgdm},  we establish key relationships among iterates via the momentum acceleration framework in \eqref{Eq: the update of x in SGDM}.
\begin{proposition}[Lemma 5.2 in \cite{InertialProximalAlternating}]
    \label{Pp: relations of xyz} Let $\{\bx^{k}\}_{k \in \mathbb{N}}$,
    $\{\by^{k}\}_{k \in \mathbb{N}}$, and $\{\bz^{k}\}_{k \in \mathbb{N}}$ be sequences
    generated by \eqref{Eq: the update of x in SGDM}. For all $k \in \mathbb{N}$,
    we have:
    \begin{enumerate}[label=(\roman{*})]
        \item $\|\bx^{k}- \by^{k}\|^{2}= \alpha_{k}^{2}\|\bx^{k}- \bx^{k-1}\|^{2}$,

        \item $\|\bx^{k}- \bz^{k}\|^{2}= \beta_{k}^{2}\|\bx^{k}- \bx^{k-1}\|^{2}$,

        \item $\|\bx^{k+1}- \by^{k}\|^{2}\geq 2(1 - \alpha_{k})\|\bx^{k+1}- \bx^{k}
            \|^{2}+ 2\alpha_{k}(\alpha_{k}- 1)\|\bx^{k}- \bx^{k-1}\|^{2}$.
    \end{enumerate}
\end{proposition}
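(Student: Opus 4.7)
\begin{noproof}
The plan is to derive all three identities/inequalities by direct algebraic manipulation of the definitions of $\by^k$ and $\bz^k$ in \eqref{Eq: the update of x in SGDM} and \eqref{Eq: the stochastic approximate in SGDM}. Parts (i) and (ii) are one-line computations; the real content lies in (iii), which requires a careful square-expansion combined with a convex-combination identity.

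For part (i), I would start from $\by^k = \bx^k + \alpha_k(\bx^k - \bx^{k-1})$, which rearranges to $\bx^k - \by^k = -\alpha_k(\bx^k - \bx^{k-1})$. Taking squared Euclidean norms yields $\|\bx^k - \by^k\|^2 = \alpha_k^2\|\bx^k - \bx^{k-1}\|^2$ immediately. Part (ii) is identical after replacing $\by^k, \alpha_k$ by $\bz^k, \beta_k$ using $\bz^k = \bx^k + \beta_k(\bx^k - \bx^{k-1})$. No inequality is needed; these are exact equalities.

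For part (iii), I would first write the difference as a single perturbation: $\bx^{k+1} - \by^k = (\bx^{k+1} - \bx^k) - \alpha_k(\bx^k - \bx^{k-1})$. The key observation is the identity
\begin{equation*}
\|a - \alpha b\|^2 \;=\; (1-\alpha)\|a\|^2 + \alpha\|a - b\|^2 + \alpha(\alpha-1)\|b\|^2,
\end{equation*}
which is verified by expanding both sides (the cross term $-2\alpha\langle a,b\rangle$ on the left matches the cross term from expanding $\alpha\|a-b\|^2$ on the right). Applying this with $a = \bx^{k+1} - \bx^k$, $b = \bx^k - \bx^{k-1}$, and $\alpha = \alpha_k$ yields
\begin{equation*}
\|\bx^{k+1} - \by^k\|^2 = (1-\alpha_k)\|\bx^{k+1} - \bx^k\|^2 + \alpha_k \|\bx^{k+1} - \bx^{k-1}\|^2 + \alpha_k(\alpha_k-1)\|\bx^k - \bx^{k-1}\|^2.
\end{equation*}
Dropping the nonnegative middle term $\alpha_k\|\bx^{k+1}-\bx^{k-1}\|^2$ (which is legitimate because $\alpha_k \in (0,1)$) gives the bound in (iii), possibly up to the numerical factor of $2$; the factor of $2$ in the stated form can alternatively be obtained by applying Young's inequality with a carefully tuned parameter to the cross term $-2\alpha_k\langle \bx^{k+1} - \bx^k, \bx^k - \bx^{k-1}\rangle$ arising from the direct square expansion.

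The main obstacle, to the extent there is one, is choosing the right decomposition in (iii): a naive Young's inequality with parameter $1$ produces the coefficients $(1-\alpha_k)$ and $\alpha_k(\alpha_k-1)$ without the factor of $2$, so one must either invoke the convex-combination identity above and discard a nonnegative term, or tune the Young parameter $t$ to trade between the two terms. Both routes are short calculations, and no further assumptions on the iterates beyond the update rule are needed.
\end{noproof}
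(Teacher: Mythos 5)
The paper does not actually prove this proposition—it is quoted from Lemma 5.2 of \cite{InertialProximalAlternating}—so your write-up supplies an argument where the paper supplies only a citation. Parts (i) and (ii) are exactly right: they are one-line consequences of the definitions of $\by^k$ and $\bz^k$ in \eqref{Eq: the update of x in SGDM} and \eqref{Eq: the stochastic approximate in SGDM}. Your identity $\|a-\alpha b\|^2=(1-\alpha)\|a\|^2+\alpha\|a-b\|^2+\alpha(\alpha-1)\|b\|^2$ is correct, and dropping the nonnegative term $\alpha_k\|a-b\|^2$ (note $a-b=\bx^{k+1}-2\bx^k+\bx^{k-1}$, the second difference, not $\bx^{k+1}-\bx^{k-1}$ as you wrote—harmless, since the term is discarded) gives $\|\bx^{k+1}-\by^k\|^2\ge(1-\alpha_k)\|\bx^{k+1}-\bx^k\|^2+\alpha_k(\alpha_k-1)\|\bx^k-\bx^{k-1}\|^2$. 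This is the right proof of the right inequality.

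The gap is your closing claim that the extra factor of $2$ in the stated version "can alternatively be obtained by applying Young's inequality with a carefully tuned parameter." It cannot: the factor-$2$ inequality is false for every $\alpha_k\in(0,1)$. Take $\bx^k=\bx^{k-1}$; then the claim reduces to $\|\bx^{k+1}-\bx^k\|^2\ge 2(1-\alpha_k)\|\bx^{k+1}-\bx^k\|^2$, which fails whenever $\alpha_k<1/2$, and for $\alpha_k\ge 1/2$ a parallel choice of the two increments with suitable magnitudes also violates it (the associated quadratic form $(2\alpha-1)s^2-2\alpha st+(2\alpha-\alpha^2)t^2$ has positive discriminant $2(\alpha-1)^2>0$ for $\alpha\ne 1$). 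No choice of Young parameter can rescue a false inequality, so that sentence should be deleted. The resolution is that the factor of $2$ in the proposition is a misprint: tracing the coefficients $\tfrac{1-\alpha_k}{\eta_k}$ and $\tfrac{\alpha_k}{\eta_k}$ in \eqref{Eq: the update of x of L in StoM} shows the paper actually invokes the version \emph{without} the factor of $2$, which is precisely what your identity delivers. State that explicitly rather than trying to manufacture the stronger constant.
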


For \Cref{alg:our_algorithm_storm}, we employ the STORM (STochastic Recursive Momentum)  variance reduction estimator \cite{cutkoskyMomentumBasedVarianceReduction2019}.
In stochastic optimization problems where $F(\bx) = \mathbb{E}_{\xi}[F(\bx;\xi)]$ with $\xi \in \Xi$, standard momentum methods use
\[
    \bd^{k}= \rho_{k}\nabla F(\bx^{k}, \xi^{k}) + (1 - \rho_{k}) \bd^{k-1}, \text{
    where }\rho_{k}\in [0, 1],
\] yielding a biased estimate of $\nabla F(\bx^{k})$. 
STORM adds a
correction term, $(1 - \rho_{k}) (\nabla F(\bx^{k}, \xi^{k}) - \nabla F(\bx^{k-1}
, \xi^{k})$, resulting in:
\[
    \begin{aligned}
         & \bd^{k}=\left\{\begin{array}{ll}\nabla F\left(\bx^{k}, \xi^{k}\right), & \text{if } k=1, \\ \nabla F\left(\bx^{k}, \xi^{k}\right) + \left(1-\rho_k\right)\left( \bd^{k-1} - \nabla F\left(\bx^{k-1}, \xi^{k}\right) \right), & \text{if } k>1,\end{array}\right. \\
         & \bx^{k+1}=\bx^{k}-\eta_{k}\bd^{k}.
    \end{aligned}
\]
This exploits the smoothness of $F(\cdot, \xi)$, leading to efficient
variance reduction. When $\rho_{k}= 1$, $\bd^{k}$ reduces to vanilla mini-batch SGD. We set $\rho_{k}\in (0,1)$ for $k \geq 1$ in \Cref{alg:our_algorithm_storm}.

\section{Discussions on \Cref{Ass: main assumptions}(ii) in \Cref{Section: Convergence analysis}}

\label{Appendix: discussion on assumptions} 
This section verifies that the smoothness constant $L_F$ in \Cref{Ass: main assumptions}(ii) is independent of the sample size $n$, which is essential for our convergence analysis.

For the smooth properties of mappings in our problem, we consider the nonlinear mapping
$\mathcal{D}(\bx) = (\bD_{1}(\bx), \ldots, \bD_{n}(\bx))$ from $\mathcal{X}\subseteq
\RR^{d_{\bx}}$ to $\RR^{n \times d}$, and make the following assumption:

\begin{assumption}
    [Properties of $\mathcal{D}$] \label{Ass: the properties of D} Suppose there
    exist constants $L_{\mathcal{D}},\, L_{\nabla \mathcal{D}}>0$ such that for any
    $\bx, \bx' \in \mathcal{X}$:
    $\|\mathcal{D}(\bx) - \mathcal{D}(\bx')\| \leq L_{\mathcal{D}}\|\bx - \bx'\|$
    and
    $\|\nabla \mathcal{D}(\bx) - \nabla \mathcal{D}(\bx')\| \leq L_{\nabla
    \mathcal{D}}\|\bx - \bx'\|$.
\end{assumption}

Combining \Cref{Ass: main assumptions}(i) and \Cref{Ass: the properties of D},
we can establish that there exist constants
$C_{\mathcal{D}}, G_{\mathcal{D}}, C_{\mathcal{D}\mathcal{B}}> 0$ such that:
\[
    \|\mathcal{D}(\bx)\| \leq C_{\mathcal{D}}, \, \|\nabla \mathcal{D}(\bx)\|
    \leq G_{\mathcal{D}}, \, \text{and}\quad \|\mathcal{D}(\bx) - \bB\| \leq C
    _{\mathcal{D}\mathcal{B}}.
\]
Based on these assumptions, we can establish the smoothness properties of the composite
function $F$. First, we observe that $P(\bx, \bB)$ is $\frac{\gamma}{n}$-smooth with
respect to $\bB$ and $\frac{\gamma}{n}(L_{\nabla \mathcal{D}}(C_{\mathcal{D}}+ C_{\bB}
) + G_{\mathcal{D}}L_{\mathcal{D}})$-smooth with respect to $\bx$. Combining
these properties, $P(\bx, \bB)$ has an overall smoothness constant:
\[
    L_{s}= \frac{\gamma}{n}\sqrt{2\max\{L^{2}_{\nabla \mathcal{D}}C^{2}_{\mathcal{D}\mathcal{B}}+
    (2G_{\mathcal{D}}^{2}+ 1)L^{2}_{\mathcal{D}}, 2G_{\mathcal{D}}^{2}+ 1\}}= \mathcal{O}
    (1)
\]

Consequently, $F(\bx, \bB)$ is $L_{F}$-smooth with respect to $(\bx, \bB)$, where
$L_{F}= \mathcal{O}(1)$. An important observation is that, despite $C_{\mathcal{D}}$,
$G_{\mathcal{D}}$ and $C_{\mathcal{D}\mathcal{B}}$ potentially being
$\mathcal{O}(\sqrt{n})$, the overall smoothness constant $L_{F}$ remains independent
of the sample size $n$, which is crucial for our convergence analysis.
This smoothness property implies the following descent lemma for $F$ with
respect to $\bx$ and $\bB$:
\begin{align}
\label{Eq: F smoothness with x}
    F(\bx', \bB) \leq F(\bx, \bB) + \langle \nabla_{\bx}F(\bx, \bB), \bx' -\bx\rangle
    + \frac{L_{F}}{2}\| \bx - \bx'\|^{2}
\end{align}
for any $\bB \in \calB$; and similarly for $\bB$:
{\small 
\begin{align}
\label{Eq: F smoothness with B}
    F(\bx, \bB') \leq F(\bx, \bB) + \langle \nabla_{\bB}F(\bx, \bB), \bB' -\bB\rangle
    + \frac{L_{F}}{2}\| \bB - \bB'\|^{2}
\end{align}} for any $\bx \in \calX$.

\section{Proofs of the main theorems in \Cref{Section: Convergence analysis}}
Before proceeding, we define the following quantities for notational simplicity:
\[
    \Delta_{\bx}^{k}= \frac{\|\bx^{k}- \bx^{k-1}\|^{2}}{2},\quad \Delta_{\bm{B}}^{k}
    = \frac{\|\bm{B}^{k}- \bm{B}^{k-1}\|^{2}_{F}}{2}.
\] We first analyze how the Lagrangian function \eqref{Eq: simplified_lagrangian} changes
during the updates of primal and dual variables in one iteration. The total change can be decomposed as:
\begin{align}
    \label{Eq: One-iteration progress decomposition} & \calL(\bx^{k+1}, \bB^{k+1}, \bLmbd^{k+1}) - \calL(\bx^{k}, \bB^{k}, \bLmbd^{k}) \notag                                                                                                                                                                                             \\
    =                                                & \,\underbrace{\calL(\bx^{k+1}, \bB^{k+1}, \bLmbd^{k+1}) - \calL(\bx^{k+1}, \bB^k, \bLmbd^k)}_{\text{one-iteration progress on $\bB$ and $\bLmbd$}}+ \underbrace{\calL(\bx^{k+1}, \bB^{k}, \bLmbd^{k}) - \calL(\bx^{k}, \bB^k, \bLmbd^k)}_{\text{one-iteration progress on $\bx$}}.
\end{align}
\subsection{Common intermediate result}
\label{Appendix: Common intermediate result}
\begin{figure}[!t]
    \centering
    \includegraphics[width=0.83\textwidth]{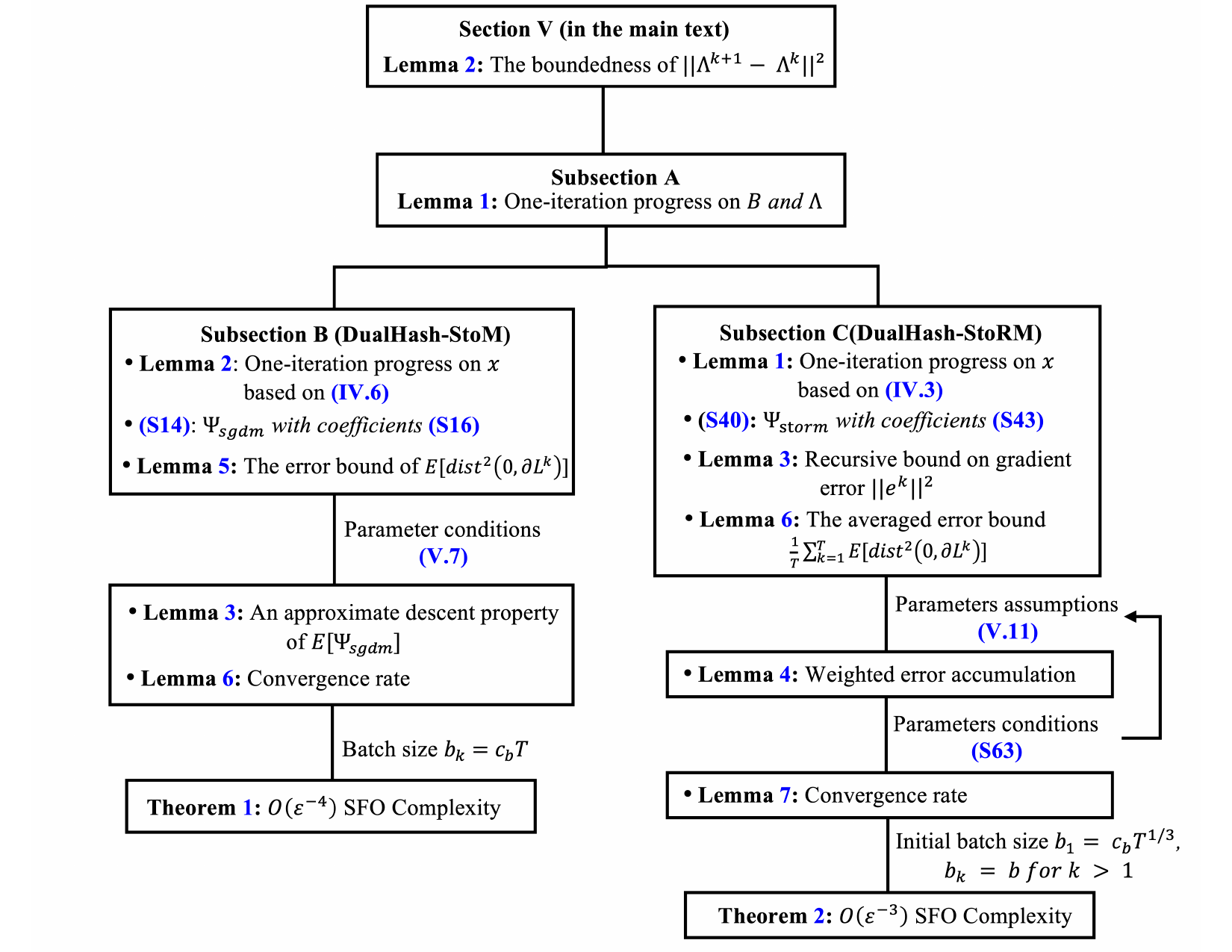}
    \caption{Theoretical analysis framework of DualHash.}
    \label{fig: Theoretical analysis framework of DualHash}
    \vspace{-3ex}
\end{figure}
\setcounter{lemma}{0}
\setcounter{proof}{0}
\begin{lemma}[One-iteration progress on $\bB$ and $\bLmbd$]
    \label{Lemma: One-iteration progress of B and Lambda} Consider the sequence $\{
    \bx^{k}, \bB^{k}, \bm{\Lambda}^{k}\}_{k \in \NN}$ generated by \Cref{alg:our_algorithm_sgdm}
    or \Cref{alg:our_algorithm_storm}. Under \Cref{Ass: main assumptions} (ii), for \(\forall\) $k \geq 1$, it holds that
    \small{
    \begin{align}
        \label{item: B and Lambda} & \calL(\bx^{k+1}, \bB^{k+1}, \bLmbd^{k+1}) - \calL(\bx^{k+1}, \bB^{k}, \bLmbd^{k}) \notag                                                                                                                                                                                                                                                                  \\
        \leq                       & \,\underbrace{3\delta\tau^{-1}}_{K_1}\Delta_{\bB}^{k+2}- \underbrace{\left(2\tau^{-1} - L_F - 3\delta\tau(\tau^{-1} + L_F)^2 \right)}_{K_2}\Delta_{\bB}^{k+1}+ \underbrace{\frac{\tau L_{F}^{2}}{\delta}}_{K_3}\Delta_{\bB}^{k}+ \underbrace{3\delta\tau L_F^2}_{K_4}\Delta_{\bx}^{k+2}+ \underbrace{\frac{\tau L_{F}^{2}}{\delta}}_{K_3}\Delta_{\bx}^{k+1}
    \end{align}}
\end{lemma}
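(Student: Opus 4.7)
The plan is to split the one-iteration change into a primal part (the $\bB$-update with $\bLmbd$ fixed) and a dual part (the $\bLmbd$-update with $\bB$ fixed), bound each separately, and then combine via Young's inequality and \Cref{Lemma: dual boundedness}. For the primal part I would apply the descent lemma \eqref{Eq: F smoothness with B} to $F$ in $\bB$ and then use the first-order identity $\nabla_{\bB}F(\bx^{k+1},\bB^k)+\bLmbd^k = -\tau^{-1}(\bB^{k+1}-\bB^k)$ forced by the $\bB$-update \eqref{Eq: the update of B}. The $\bLmbd^k$ cross-terms cancel and one recovers the clean primal descent $-(2\tau^{-1}-L_F)\Delta_{\bB}^{k+1}$, which already accounts for the $-K_{2}\Delta_{\bB}^{k+1}$ piece of the target inequality.

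The dual part is the technical heart of the argument. Because $\calL$ is concave in $\bLmbd$ (recall $h^*$ is convex), the prox step is essentially dual ascent and typically \emph{increases} $\calL$, so the usual convex-descent style inequalities for $h^*$ only produce lower bounds on the dual change, the wrong direction. To produce an upper bound I would exploit convexity of $h^*$ at $\bLmbd^k$ using the specific subgradient $\mathcal{G}_{\bLmbd}^{k} := (2\bB^k-\bB^{k-1})-\tau(\bLmbd^k-\bLmbd^{k-1}) \in \partial h^*(\bLmbd^k)$ produced by the prox optimality at the previous iteration, giving the dual change $\leq \langle \bB^{k+1}-\mathcal{G}_{\bLmbd}^{k},\,\bLmbd^{k+1}-\bLmbd^{k}\rangle$.

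The crucial algebraic step is to then substitute the $\bB$-update expressions at both iterations $k-1$ and $k$ into $\mathcal{G}_{\bLmbd}^{k}$; the $\bLmbd^{k-1}$ contributions cancel and one obtains the key identity
\[
\bB^{k+1}-\mathcal{G}_{\bLmbd}^{k} = -\tau\bigl[\nabla_{\bB}F(\bx^{k+1},\bB^k)-\nabla_{\bB}F(\bx^k,\bB^{k-1})\bigr].
\]
This is the PDHG-style extrapolation gain adapted to our nonconvex composite setting: the apparent dual ascent is really only of order a gradient finite-difference, which $L_F$-smoothness of $F$ controls. Identifying this subgradient and performing the cancellation is the main obstacle; once it is in hand, the rest is mechanical.

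To close, I would apply Young's inequality $\langle a,b\rangle \leq \frac{1}{2\tau\delta}\|a\|^2 + \frac{\tau\delta}{2}\|b\|^2$ to the above inner product. The $\|a\|^2$ side, via $L_F$-smoothness of $\nabla F$, is bounded by $2L_F^2(\Delta_{\bx}^{k+1}+\Delta_{\bB}^{k})$ and produces the $\frac{\tau L_F^2}{\delta}(\Delta_{\bx}^{k+1}+\Delta_{\bB}^{k})$ terms of the target bound. The $\|b\|^2 = \|\bLmbd^{k+1}-\bLmbd^k\|^2$ side is controlled by \Cref{Lemma: dual boundedness}, producing $3\delta\tau^{-1}\Delta_{\bB}^{k+2}+3\delta\tau(\tau^{-1}+L_F)^2\Delta_{\bB}^{k+1}+3\delta\tau L_F^2\Delta_{\bx}^{k+2}$. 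Summing with the primal bound yields exactly the stated inequality.
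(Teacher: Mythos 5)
Your proposal is correct and follows essentially the same route as the paper: the paper's first displayed inequality \eqref{Eq: the recursive relation of L on B and Lambda} is precisely the combination of your primal descent term $-(2\tau^{-1}-L_F)\Delta_{\bB}^{k+1}$ with the inner product $\langle \bLmbd^{k+1}-\bLmbd^{k},\, \tau(\nabla_{\bB}F(\bx^{k},\bB^{k-1})-\nabla_{\bB}F(\bx^{k+1},\bB^{k}))\rangle$ obtained from your subgradient/cancellation identity, and the paper then closes with the same Young split and \Cref{Lemma: dual boundedness}. You have merely made explicit the derivation (convexity of $h^*$ at $\bLmbd^{k}$ with the prox-induced subgradient $\mathcal{G}_{\bLmbd}^{k}$, plus the $\bB$-update at two consecutive iterations) that the paper leaves implicit.
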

\begin{proof}
    Using the smoothness of $F$ with respect to $\bB$ in \eqref{Eq: F smoothness with B}, we conclude that
    \begin{align}
        \label{Eq: the recursive relation of L on B and Lambda} & \calL(\bx^{k+1}, \bB^{k+1}, \bm{\Lambda}^{k+1}) - \calL(\bx^{k+1}, \bB^{k}, \bm{\Lambda}^{k}) \notag                                                                                                                        \\
        \leq                                                    & - \left(\tau^{-1}- \frac{L_{F}}{2}\right) \|\bB^{k+1}- \bB^{k}\|^{2}+ \Big\langle \bm{\Lambda}^{k+1}- \bm{\Lambda}^{k}, \tau \Big( \nabla_{\bB}F(\bx^{k}, \bB^{k-1}) - \nabla_{\bB}F(\bx^{k+1}, \bB^{k}) \Big) \Big\rangle.
    \end{align}
    For the inner product term in \eqref{Eq: the recursive relation of L on B and Lambda},
    we apply the block-wise separability of \eqref{Eq: simplified_lagrangian}
    with respect to $\bm{B}= \{b_{i}\}_{i=1}^{n}$ and
    $\bm{\Lambda}= \{\lambda_{i}\}_{i=1}^{n}$ and obtain
    \begin{align}
        \label{Eq: the inner product in the recursive relation of L on B and Lambda} & \langle \bm{\Lambda}^{k+1}- \bm{\Lambda}^{k}, \tau (\nabla_{\bm{B}}F(\bx^{k}, \bm{B}^{k-1}) - \nabla_{\bm{B}}F(\bx^{k+1}, \bm{B}^{k})) \rangle \notag                                       \\
        \leq                                                                         & \, \frac{\delta\tau}{2}\|\bm{\Lambda}^{k+1}- \bm{\Lambda}^{k}\|^{2}+ \frac{\tau}{2\delta}\|\nabla_{\bm{B}}F(\bx^{k+1}, \bm{B}^{k}) - \nabla_{\bm{B}}F(\bx^{k}, \bm{B}^{k+1}) \|^{2} \notag \\
        \leq                                                                         & \, \frac{\delta\tau}{2}\|\bm{\Lambda}^{k+1}- \bm{\Lambda}^{k}\|^{2}+ \frac{\delta\tau L_{F}^{2}}{2}\left( \|\bx^{k+1}- \bx^{k}\|^{2}+ \|\bm{B}^{k}- \bm{B}^{k-1}\|^{2}\right).
    \end{align}
    where the first inequality follows from the fact that
    $\left<\ba, \bb\right> \leq \frac{\delta}{2}\|\ba\|^{2}+ \frac{1}{2\delta}\|
    \bb\|^{2}$
    for some $\delta >0$ and the second inequality follows from \eqref{Eq: F smoothness with B} agian.

    Then, substituting \eqref{Eq: the inner product in the recursive relation of L on B and Lambda}
    and \eqref{Eq: dual boundedness in proof} into \eqref{Eq: the recursive relation of L on B and Lambda}
    and rearranging the inequality yield \eqref{item: B and Lambda}. \eproof
\end{proof}

We present complexity analyses of two algorithms. Before
presenting the results, we introduce the notation that will be used throughout our
analysis:
\begin{align*}
    \bxi^{k}                & = \{\xi_{j}^{k}, j \in \calJ_{k}\},        & \text{(mini-batch at iteration $k$)}                 \\
    \bxi^{[k]}              & = \{\bxi^{1}, \bxi^{2}, \ldots, \bxi^{k}\}, & \text{(history of mini-batches up to iteration $k$)} \\
    \EE_{\bxi^{[k]}}[\cdot] & = \EE [ \cdot | \bxi^{[k]}] .              & \text{(conditional expectation given history)}
\end{align*}

For $\bd^{k}\in \partial \calL(\bx^{k}, \bB^{k}, \bm{\Lambda}^{k})$, let us
denote $\bd^{k}= (\bd_{\bx}^{k}, \bd_{\bB}^{k}, \bd_{\bm{\Lambda}}^{k})$ and
\begin{align}
    \label{Eq: the components of the subgradient}\begin{cases}&\bd_{\bx}^{k}= \nabla_{\bx}F(\bx^{k}, \bB^{k}),\\&\bd_{\bB}^{k}= \nabla_{\bB}F(\bx^{k}, \bB^{k}) + \bm{\Lambda}^{k}, \\&\bd_{\bm{\Lambda}}^{k}= \bB^{k}- \calG_{\bm{\Lambda}}^{k},\end{cases}
\end{align}
where $\calG_{\bm{\Lambda}}^{k}\in \partial h^{*}(\bm{\Lambda}^{k})$.

Since the updates of $\bB$ and $\bLmbd$ are common in both \Cref{alg:our_algorithm_sgdm}
and \Cref{alg:our_algorithm_storm}, we analyze their effects in
\Cref{Appendix: Common intermediate result}. Then, for each algorithm, we separately analyze the change in the Lagrangian function due to the specific update rule for
$\bx$. The theoretical analysis framework for both DualHash instances is
summarized in \Cref{fig: Theoretical analysis framework of DualHash}.
\subsection{Complexity analysis of DualHash-StoM (\texorpdfstring{\Cref{alg:our_algorithm_sgdm}}{Algorithm
1})}
\label{Appendix: the complexity analysis of DualHash-StoM}
\setcounter{lemma}{0}
\setcounter{proof}{0}
Let us define the gradient error in the $k$-th iteration
\begin{align}
    \label{Eq: the gradient error in SGDM}\be^{k}= \bG^{k}- \nabla_{\bx}F(\bz^{k}, \bB^{k}),
\end{align}
where $\bG^{k}$ is defined in \eqref{Eq: the stochastic approximate in SGDM}. 
It is straightforward to estimate the error under \Cref{Ass: main assumptions} (iii) as follows: for $\forall \, k\geq 1$,
\begin{align}
    \label{Eq: the initial stochastic error in SGDM}\EE_{\bxi^{[k]}}\left[ \| \be^{k}\|^{2}\right] \leq \frac{\sigma^{2}}{b_{k}}.
\end{align}
\subsubsection{Main lemmas}
Now, we analyze how the Lagrangian function \eqref{Eq: simplified_lagrangian} changes
during the update of $\bx$ \eqref{Eq: the update of x in SGDM}. We first present
a lemma concerning a property of a continuously differentiable function with a Lipschitz
continuous gradient, followed by a proposition that details the interrelations
among the sequences
$\{\bx^{k}\}_{k \in \NN},\, \{\bm{y}^{k}\}_{k \in \NN},\, \{\bm{z}^{k}\}_{k \in
\NN}$
generated by the algorithm.
\begin{lemma}
    \label{Lemma: auxiliary lemma for f} Let $f(\bx): \mathbb{R}^{q}\rightarrow \mathbb{R}$
    be a continuously differentiable function with $L$-lipschitz continuous gradient.
    Then, for any $\bx,\, \by,\,\bz \in \mathbb{R}^{q}$ and any $\bx^{+}\in \mathbb{R}
    ^{q}$ defined by
    \[
        \bx^{+}= \by - \eta \nabla f(\bz), \hspace{5pt}\eta > 0,
    \]it holds that
    \begin{align}
        \label{Eq: auxiliary lemma for f}f(\bx^{+}) \leq f(\bx) + \langle \nabla f(\bx) - \nabla f(\bz), \bx^{+}- \bx \rangle + \frac{L}{2}\|\bx^{+}- \bx\|^{2}+ \frac{1}{2\eta}\|\bx - \by\|^{2}- \frac{1}{2\eta}\|\bx^{+}- \by\|^{2}.
    \end{align}
\end{lemma}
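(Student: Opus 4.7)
The plan is to combine the standard gradient descent lemma (\ref{Eq: the descent lemma}) with a three-point identity, exploiting the fact that $\nabla f(\bz)$ can be algebraically recovered from the update rule. Specifically, since $\bx^+ = \by - \eta \nabla f(\bz)$, we have $\nabla f(\bz) = \eta^{-1}(\by - \bx^+)$, so any inner product involving $\nabla f(\bz)$ can be rewritten as an inner product of two vector differences, which in turn yields squared norms via the polarization identity.

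First I would invoke the descent lemma for $L$-smooth $f$ at the pair $(\bx, \bx^+)$ to obtain
\[
f(\bx^+) \leq f(\bx) + \langle \nabla f(\bx), \bx^+ - \bx \rangle + \frac{L}{2}\|\bx^+ - \bx\|^2.
\]
Next I would split $\nabla f(\bx) = \bigl(\nabla f(\bx) - \nabla f(\bz)\bigr) + \nabla f(\bz)$ inside the inner product, keeping the first piece as-is (it matches the target RHS), and handling the second piece $\langle \nabla f(\bz), \bx^+ - \bx \rangle$ via the substitution $\nabla f(\bz) = \eta^{-1}(\by - \bx^+)$. This turns it into $\eta^{-1}\langle \by - \bx^+, \bx^+ - \bx \rangle$.

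Then I would apply the three-point identity $2\langle \ba, \bb \rangle = \|\ba+\bb\|^2 - \|\ba\|^2 - \|\bb\|^2$ with $\ba = \by - \bx^+$ and $\bb = \bx^+ - \bx$, so that $\ba+\bb = \by - \bx$, yielding
\[
\langle \nabla f(\bz), \bx^+ - \bx \rangle = \frac{1}{2\eta}\|\bx - \by\|^2 - \frac{1}{2\eta}\|\bx^+ - \by\|^2 - \frac{1}{2\eta}\|\bx^+ - \bx\|^2.
\]
Substituting back gives the desired bound plus the extra nonpositive term $-\frac{1}{2\eta}\|\bx^+ - \bx\|^2$, which can simply be dropped to weaken the inequality into exactly the stated form.

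There is no real obstacle here beyond careful bookkeeping; the only subtlety is remembering that the redundant negative $-\frac{1}{2\eta}\|\bx^+-\bx\|^2$ can be discarded (rather than merged with $\frac{L}{2}\|\bx^+-\bx\|^2$), which is what makes the bound hold for \emph{any} $\eta > 0$ without requiring $\eta \leq 1/L$. That looseness is presumably intentional, since downstream this lemma will be combined with momentum-induced quantities $\|\bx-\by\|^2$ and $\|\bx-\bz\|^2$ controlled by \Cref{Pp: relations of xyz}.
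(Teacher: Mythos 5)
Your proof is correct and follows essentially the same route as the paper's: both combine the descent lemma with the observation that $\nabla f(\bz) = \eta^{-1}(\by - \bx^{+})$, the paper phrasing this step as the suboptimality of $\bx$ for the quadratic $\bu \mapsto \langle \nabla f(\bz), \bu - \by\rangle + \frac{1}{2\eta}\|\bu-\by\|^2$ minimized by $\bx^{+}$, while you expand the same quantity exactly via the polarization identity and then drop the nonpositive term $-\frac{1}{2\eta}\|\bx^{+}-\bx\|^2$. The two derivations produce the identical intermediate inequality, so there is nothing to flag.
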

\begin{proof}
    By the definition of $\bx^{+}$, we find that
    \[
        \bx^{+}= \argmin\limits_{\bu \in \mathbb{R}^d}\{ \langle \nabla f(\bz), \bu
        - \by \rangle + \frac{1}{2\eta}\| \bu - \by\|^{2}\},
    \]
    and hence, in particular, by taking $\bu = \bx$, we obtain
    \begin{align*}
        0 \leq \langle \nabla f(\bz), \bx - \bx^{+}\rangle + \frac{\tau}{2}\| \bx - \by\|^{2}- \frac{1}{2\eta}\| \bx^{+}- \by\|^{2}.
    \end{align*}
    Invoking the descent lemma for a smooth function yields the result. \eproof
\end{proof}
\begin{lemma}[One-iteration progress on $\bx$]
    \label{Lemma: One-iteration progress of x in SGDM} Consider the sequence $\{\bx
    ^{k}, \bB^{k}, \bm{\Lambda}^{k}\}_{k \in \NN}$ generated by \Cref{alg:our_algorithm_sgdm}.
    Under \Cref{Ass: main assumptions} (ii), for $\forall$ $k \geq 1$, it
    holds that
    \small{
    \begin{align}
        \label{Eq: One-iteration progress of x in SGDM} & \calL (\bx^{k+1}, \bB^{k}, \bLmbd^{k}) - \calL (\bx^{k}, \bB^{k}, \bLmbd^{k}) \leq -\underbrace{(\frac{1 - \alpha_{k}}{\eta_{k}}- 2L_F)}_{K_5}\Delta_{\bx}^{k+1}+\underbrace{( \frac{\alpha_{k}}{\eta_{k}}+ 2L_F\beta_k^2)}_{K_6}\Delta_{\bx}^{k}+ \frac{1}{L_{F}}\|\be^{k}\|^{2}.
    \end{align}}
\end{lemma}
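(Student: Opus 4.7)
I would reduce the Lagrangian increment to a purely primal change in $F(\cdot,\bB^{k})$, apply the $L_{F}$-smoothness descent lemma, and then handle the Nesterov-type stochastic $\bx$-update via polarization combined with three carefully tuned Young's inequalities. Only parts (i) and (ii) of \Cref{Pp: relations of xyz} are needed, to evaluate $\|\by^{k}-\bx^{k}\|$ and $\|\bz^{k}-\bx^{k}\|$.

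\emph{Setup and main manipulation.} Because $\calL(\bx,\bB,\bLmbd)=F(\bx,\bB)+\langle\bB,\bLmbd\rangle-h^{*}(\bLmbd)$ depends on $\bx$ only through $F$, the increment equals $F(\bx^{k+1},\bB^{k})-F(\bx^{k},\bB^{k})$, which the descent lemma \eqref{Eq: the descent lemma} bounds above by $\langle\nabla_{\bx}F(\bx^{k},\bB^{k}),\bx^{k+1}-\bx^{k}\rangle+\tfrac{L_{F}}{2}\|\bx^{k+1}-\bx^{k}\|^{2}$. I would then split $\nabla_{\bx}F(\bx^{k},\bB^{k})=[\nabla_{\bx}F(\bx^{k},\bB^{k})-\nabla_{\bx}F(\bz^{k},\bB^{k})]+\bG^{k}-\be^{k}$ via \eqref{Eq: the gradient error in SGDM}, and rewrite the $\bG^{k}$ inner product using the update \eqref{Eq: the update of x in SGDM}: since $\bG^{k}=(\by^{k}-\bx^{k+1})/\eta_{k}$ and $\by^{k}-\bx^{k+1}=-(\bx^{k+1}-\bx^{k})+\alpha_{k}(\bx^{k}-\bx^{k-1})$, it expands as $-\tfrac{1}{\eta_{k}}\|\bx^{k+1}-\bx^{k}\|^{2}+\tfrac{\alpha_{k}}{\eta_{k}}\langle\bx^{k+1}-\bx^{k},\bx^{k}-\bx^{k-1}\rangle$.

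\emph{Three Young's inequalities.} I would then apply Young's inequality three times with tuned parameters: (a) on the momentum cross-term $\tfrac{\alpha_{k}}{\eta_{k}}\langle\bx^{k+1}-\bx^{k},\bx^{k}-\bx^{k-1}\rangle$ with parameter $1$, giving coefficient $\tfrac{\alpha_{k}}{2\eta_{k}}$ on each squared difference and matching the non-$L_{F}$ part of $K_{6}$; (b) on the gradient-difference term, using $L_{F}$-smoothness together with \Cref{Pp: relations of xyz}(ii) ($\|\bz^{k}-\bx^{k}\|=\beta_{k}\|\bx^{k}-\bx^{k-1}\|$) and parameter $L_{F}/2$, producing $L_{F}\beta_{k}^{2}\|\bx^{k}-\bx^{k-1}\|^{2}+\tfrac{L_{F}}{4}\|\bx^{k+1}-\bx^{k}\|^{2}$ and supplying the remaining $2L_{F}\beta_{k}^{2}\Delta_{\bx}^{k}$ piece of $K_{6}$; (c) on $-\langle\be^{k},\bx^{k+1}-\bx^{k}\rangle$ with parameter $L_{F}/2$, delivering $\tfrac{1}{L_{F}}\|\be^{k}\|^{2}+\tfrac{L_{F}}{4}\|\bx^{k+1}-\bx^{k}\|^{2}$, the advertised noise term.

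\emph{Assembly and expected obstacle.} Collecting the $\|\bx^{k+1}-\bx^{k}\|^{2}$ coefficients yields $\tfrac{L_{F}}{2}+\tfrac{L_{F}}{4}+\tfrac{L_{F}}{4}-\tfrac{2-\alpha_{k}}{2\eta_{k}}=L_{F}-\tfrac{2-\alpha_{k}}{2\eta_{k}}\le L_{F}-\tfrac{1-\alpha_{k}}{2\eta_{k}}$; after writing $\Delta_{\bx}^{k+1}=\tfrac{1}{2}\|\bx^{k+1}-\bx^{k}\|^{2}$ this is exactly $-K_{5}\Delta_{\bx}^{k+1}$, and an analogous tally on $\|\bx^{k}-\bx^{k-1}\|^{2}$ reproduces $K_{6}\Delta_{\bx}^{k}$. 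The main difficulty is not any single estimate but the bookkeeping: the three Young's parameters must be chosen so that the positive $O(L_{F})$ residues match the descent constant while the negative $-1/\eta_{k}$ from polarization combines with $+\tfrac{\alpha_{k}}{2\eta_{k}}$ to produce exactly the $-\tfrac{1-\alpha_{k}}{2\eta_{k}}$ in $K_{5}$, with the harmless slack $\tfrac{1}{2\eta_{k}}\ge 0$ going in the right direction of the inequality.
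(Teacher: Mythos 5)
Your proposal is correct and in fact lands on the stated constants $K_5$ and $K_6$ exactly. The overall strategy coincides with the paper's: both reduce the Lagrangian increment to $F(\bx^{k+1},\bB^k)-F(\bx^k,\bB^k)$, both split the gradient into a smoothness part $\nabla_{\bx}F(\bx^k,\bB^k)-\nabla_{\bx}F(\bz^k,\bB^k)$ plus the stochastic error $\be^k$, and both spend Young's inequalities calibrated at the scale $L_F$ to produce the $2L_F\Delta_{\bx}^{k+1}$, $2L_F\beta_k^2\Delta_{\bx}^k$, and $\tfrac{1}{L_F}\|\be^k\|^2$ pieces. Where you differ is in how the update step itself is bookkept: the paper invokes the three-point inequality of \Cref{Lemma: auxiliary lemma for f} (viewing $\bx^{k+1}$ as the minimizer of a quadratic model anchored at $\by^k$), which yields the terms $\tfrac{1}{2\eta_k}\|\bx^k-\by^k\|^2-\tfrac{1}{2\eta_k}\|\bx^{k+1}-\by^k\|^2$ and then converts them via \Cref{Pp: relations of xyz}(i) and (iii), whereas you substitute $\by^k-\bx^{k+1}=-(\bx^{k+1}-\bx^k)+\alpha_k(\bx^k-\bx^{k-1})$ into $\langle \bG^k,\bx^{k+1}-\bx^k\rangle$ and polarize directly, so you never need \Cref{Pp: relations of xyz}(iii). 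This is a genuinely cleaner route for the constants: \Cref{Pp: relations of xyz}(iii) as printed carries factors of $2$ which, taken literally, would give a coefficient $\alpha_k(2-\alpha_k)/\eta_k$ on $\Delta_{\bx}^k$ rather than the stated $\alpha_k/\eta_k$, while your direct expansion produces $\tfrac{\alpha_k}{2\eta_k}\|\bx^k-\bx^{k-1}\|^2$ on the nose and discards the harmless surplus $-\tfrac{1}{2\eta_k}\|\bx^{k+1}-\bx^k\|^2$ in the correct direction. Asymptotically the two arguments are interchangeable; yours is the more elementary and the more transparent about where each contribution to $K_5$ and $K_6$ comes from.
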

\begin{proof}
    Recalling the equivalent relation:
    \begin{align}
        \label{Eq: the update of x one progress in StoM}\mathcal{L}(\bx^{k+1}, \bm{B}^{k}, \bm{\Lambda}^{k}) - \mathcal{L}(\bx^{k}, \bm{B}^{k}, \bm{\Lambda}^{k}) = F(\bx^{k+1}, \bm{B}^{k}) - F(\bx^{k}, \bm{B}^{k}).
    \end{align}
    Then, applying the smoothness of $F$ with respect to $\bx$ and \eqref{Eq: auxiliary lemma for f}
    yields
    \begin{align}
        \label{Eq: the update of theta core}F(\bx^{k+1}, \bm{B}^{k}) \leq & F(\bx^{k}, \bm{B}^{k}) + \langle \nabla_{\bx}F(\bx^{k}, \bm{B}^{k}) - \bG^{k}, \bx^{k+1}- \bx^{k}\rangle \notag                              \\
        +                                                                 & \frac{L_{F}}{2}\|\bx^{k+1}- \bx^{k}\|^{2}+ \frac{1}{2\eta_{k}}\|\bx^{k}- \bm{y}^{k}\|^{2}- \frac{1}{2\eta_{k}}\|\bx^{k+1}- \bm{y}^{k}\|^{2}.
    \end{align}
    For the inner product term in \eqref{Eq: the update of theta core}, using
    the facts that
    $\langle a, b \rangle \leq \frac{s}{2}\|a\|^{2}+ \frac{1}{2s}\|b\|^{2}$ for
    any $s > 0$ and $\| a - c \|^{2}\leq 2\|a - b \|^{2}+ 2\|b -c\|^{2}$ yields
    { \begin{align}\label{Eq: Inner product for theta}&\langle \nabla_{\bx}F(\bx^{k}, \bm{B}^{k}) - \bG^{k}, \bx^{k+1}- \bx^{k}\rangle \notag \\ \leq&\,\frac{s}{2}\|\bx^{k+1}- \bx^{k}\|^{2}+ \frac{1}{s}\|\nabla_{\bx}F(\bx^{k}, \bm{B}^{k}) - \nabla_{\bx}F(\bm{z}^{k}, \bm{B}^{k}) \|^{2}+ \frac{1}{s}\|\nabla_{\bx}F(\bm{z}^{k}, \bm{B}^{k}) - \bG^{k}\|^{2}\notag \\ \leq&\, \frac{s}{2}\|\bx^{k+1}- \bx^{k}\|^{2}+ \frac{L^{2}_{F}}{s}\|\bx^{k}- \bm{z}^{k}\|^{2}+ \frac{1}{s}\|\nabla_{\bx}F(\bm{z}^{k}, \bm{B}^{k}) - \bG^{k}\|^{2}.\end{align}}
    Setting $s = L_{F}$ in \eqref{Eq: Inner product for theta} and substituting into
    \eqref{Eq: the update of theta core}, then applying \cref{Pp: relations of xyz},
    we obtain
    \begin{align}
        F(\bx^{k+1}, \bm{B}^{k}) & \leq F(\bx^{k}, \bm{B}^{k}) - (\frac{1 - \alpha_{k}}{\eta_{k}}- 2L_{F})\frac{\|\bx^{k+1}- \bx^{k}\|^{2}}{2}\notag                                                                                           \\
                                 & +\,(\frac{\alpha_{k}}{\eta_{k}}+ 2L_{F}\beta_{k}^{2})\frac{\|\bx^{k}- \bx^{k-1}\|^{2}}{2}+ \frac{1}{L_{F}}\|\nabla_{\bx}F(\bm{z}^{k}, \bm{B}^{k}) - \bG^{k}\|^{2}. \label{Eq: the update of x of L in StoM}
    \end{align}
    Substituting \eqref{Eq: the update of x of L in StoM} into \eqref{Eq: the update of x one progress in StoM}
    yields \eqref{Eq: One-iteration progress of x in SGDM} and completes the proof.
    \eproof
\end{proof}
To address the oscillation of the Lagrangian function \eqref{Eq: simplified_lagrangian},
we establish an expected approximate sufficient descent property of \Cref{alg:our_algorithm_sgdm}
by introducing a Lyapunov function and a Lyapunov sequence:
\begin{align}
     & \Psi_{sgdm}(\mathcal{Q}^{k}) = \calL(\bx^{k}, \bm{B}^{k}, \bLmbd^{k}) - C_{1}\Delta_{\bB}^{k+1}+ C_{2}\Delta_{\bB}^{k}- C_{3}\Delta_{\bm{x}}^{k+1}+ C_{4}\Delta_{\bm{x}}^{k}\label{Eq: the Lyapunov function in SGDM}, \\
     & \{ \calQ^{k}\}_{k \in \NN}= \{\bx^{k}, \bB^{k}, \bm{\Lambda}^{k}, \bB^{k+1}, \bB^{k-1}, \bx^{k+1}, \bx^{k-1}\}_{k \in \NN}\label{Eq: the Lyapunov sequence in SGDM},
\end{align}
where the coefficients independent of $k$ are chosen as:
\begin{align}
    \label{Eq: the coefficients of the Lyapunov function in SGDM}\begin{cases}C_{1}&= K_{1},\\ C_{2}&= \frac{1}{2}(K_{2}- K_{1}+ K_{3}),\\ C_{3}&= K_{4},\\ C_{4}&= \frac{ \bar{L}_F+ \frac{2L_F \beta^2}{\alpha} }{1 - 2\alpha - \nu}, \text{ where }\bar{L}_{F}= 2L_{F}+ K_{3}+ K_{4}.\end{cases}
\end{align}
We also assume the parameters $\{\eta_{k}\}$, $\{\alpha_{k}, \beta_{k}\}$, $\tau$
in \Cref{alg:our_algorithm_sgdm} are set as follows:
\begin{align}
    \label{Eq: the parameters conditions of dualhash-sgdm in suppl}\eta_{k}= \frac{\eta}{L_{F}},\quad \alpha_{k}= \alpha, \quad \beta_{k}= \beta,\quad \tau \leq \frac{\sqrt{\tilde{\delta}}}{L_{F}},
\end{align}
where
$\eta =\frac{(1 - 2\alpha - \nu)/2}{\frac{\bar{L}_{F}}{L_{F}} +
\frac{(1 + \nu)\beta^{2}}{(1 - \alpha)\alpha}}$, $\alpha \in ( 0, \frac{1 - \nu}{2}
)$, $\beta \in (0,1)$, $\tilde{\delta}= c_{\delta}\delta^{2}$ are given constants
independent of $T$ with some $\nu$, $\delta$, $c_{\delta}> 0$.
\begin{lemma}
    \label{Lemma: approximate sufficient descent property in SGDM} Under \Cref{Ass:
    main assumptions} and parameter conditions \eqref{Eq: the parameters conditions of dualhash-sgdm in suppl},
    for \(\forall \) $k \geq 1$, the following approximate descent property of
    $\Psi_{sgdm}$ holds:
    \begin{align}
        \label{Eq: the descent property of Lyapunov func in SGDM}\EE [\Psi_{sgdm}(\mathcal{Q}^{k+1})] - \EE[\Psi_{sgdm}(\mathcal{Q}^{k})] \leq -C_{\bB}\EE[\Delta_{\bB}^{k+1}+ \Delta_{\bB}^{k}] -C_{\bx}\EE [\Delta_{\bm{x}}^{k+1}+ \Delta_{\bm{x}}^{k}] + \frac{\sigma^{2}}{b_{k}L_{F}},
    \end{align} where the descent coefficients are given by:
    \begin{align}
        \label{Eq: the descent coefficients of potential function in SGDM}\begin{cases}C_{\bB}&= C_{2}- C_{1}= \frac{K_2 - K_3 -K_1}{2}>0, \\ C_{\bx}&= \nu C_{4}>0.\end{cases}
    \end{align}
\end{lemma}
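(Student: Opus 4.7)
The proof plan is to combine the two one-iteration progress lemmas already in hand---\Cref{Lemma: One-iteration progress of B and Lambda} for the $(\bB,\bLmbd)$-block and \Cref{Lemma: One-iteration progress of x in SGDM} for the $\bx$-block---through the decomposition \eqref{Eq: One-iteration progress decomposition}, then form the Lyapunov difference $\Psi_{sgdm}(\calQ^{k+1})-\Psi_{sgdm}(\calQ^{k})$ so that the auxiliary $\Delta$ terms telescope cleanly. Since both progress lemmas are pathwise (conditional on $\bxi^{[k-1]}$), I would first derive a pathwise bound and only at the end take conditional expectation with respect to $\bxi^{[k]}$ and apply the variance estimate \eqref{Eq: the initial stochastic error in SGDM} to convert $\tfrac{1}{L_F}\|\be^k\|^2$ into $\sigma^2/(b_k L_F)$.

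Adding \eqref{item: B and Lambda} and \eqref{Eq: One-iteration progress of x in SGDM} yields a bound on $\calL^{k+1}-\calL^{k}$ in which each of $\Delta_\bB^{k+2},\Delta_\bB^{k+1},\Delta_\bB^{k}$ and $\Delta_\bx^{k+2},\Delta_\bx^{k+1},\Delta_\bx^{k}$ appears with an explicit coefficient in terms of $K_1,\dots,K_6$. Plugging this into $\Psi_{sgdm}^{k+1}-\Psi_{sgdm}^{k}$ and collecting like $\Delta$ terms produces six linear coefficients. The choices $C_1=K_1$ and $C_3=K_4$ kill the ``tail'' terms $\Delta_\bB^{k+2}$ and $\Delta_\bx^{k+2}$ exactly, while $C_2=(K_2-K_1+K_3)/2$ balances the coefficients of $\Delta_\bB^{k+1}$ and $\Delta_\bB^{k}$ so that both equal $-C_\bB$ with $C_\bB=(K_2-K_1-K_3)/2$. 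Positivity of $C_\bB$ is precisely what the condition $\tau\leq\sqrt{\tilde\delta}/L_F$ is engineered to enforce, since $K_2-K_1-K_3$ is an affine function of $\tau^{-1}$, $\tau$ and $\delta\tau$ that becomes positive once $\tau$ is small enough relative to $L_F$ (with $\tilde\delta=c_\delta\delta^2$).

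The $\bx$-block is more delicate because after substitution the $\Delta_\bx^{k+1}$ coefficient inherits the extra $K_3$ that leaked in from \Cref{Lemma: One-iteration progress of B and Lambda}, giving $-(K_5-K_3-K_4-C_4)$, while the $\Delta_\bx^{k}$ coefficient is $-(C_4-K_6)$. Using the parameter choices $\eta_k=\eta/L_F$, $\alpha_k=\alpha$, $\beta_k=\beta$, one has $K_5=(1-\alpha)L_F/\eta-2L_F$ and $K_6=\alpha L_F/\eta+2L_F\beta^2$, so both coefficients are affine in $1/\eta$. The stated $C_4=(\bar{L}_F+2L_F\beta^2/\alpha)/(1-2\alpha-\nu)$ with $\bar{L}_F=2L_F+K_3+K_4$ together with the prescribed value of $\eta$ is engineered so that both $\bx$-coefficients simultaneously admit the common upper bound $-\nu C_4=-C_\bx<0$; verifying this amounts to substituting $\eta$ into the two candidate inequalities $K_5-K_3-K_4-C_4\geq \nu C_4$ and $C_4-K_6\geq \nu C_4$ and checking that both collapse to the same affine inequality in $1/\eta$ by construction.

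The main obstacle is this last bookkeeping step rather than any conceptual issue: the nonstandard form of $C_4$ arises precisely because the couplings $K_3\Delta_\bx^{k+1}$ and $K_4\Delta_\bx^{k+2}$ leaking from the $(\bB,\bLmbd)$-progress lemma force the $\bx$-part of the Lyapunov function to absorb them via $\bar{L}_F$, and one must choose $\nu\in(0,1-2\alpha)$ small enough to keep both $\eta>0$ and $C_\bx>0$ simultaneously, while $\alpha\in(0,(1-\nu)/2)$ keeps the momentum admissible. Once positivity of $C_\bB$ and $C_\bx$ is secured, taking conditional expectation with respect to $\bxi^{[k]}$, bounding $\EE[\|\be^k\|^2]\leq\sigma^2/b_k$ by \eqref{Eq: the initial stochastic error in SGDM}, and then taking a final total expectation yields \eqref{Eq: the descent property of Lyapunov func in SGDM}.
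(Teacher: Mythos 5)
Your proposal is correct and follows essentially the same route as the paper: substitute the two one-iteration progress lemmas into the decomposition \eqref{Eq: One-iteration progress decomposition}, form the Lyapunov difference so that $C_1=K_1$ and $C_3=K_4$ cancel the $\Delta_\bB^{k+2}$ and $\Delta_\bx^{k+2}$ tails, balance the $\bB$-coefficients via $C_2=(K_2-K_1+K_3)/2$, verify $K_5-K_3-C_3-C_4\geq\nu C_4$ and $C_4-K_6\geq\nu C_4$ from the parameter choices (the paper isolates this bookkeeping in \Cref{lem:lyapunov-coef-positivity}), and finish by taking expectation and applying $\EE[\|\be^k\|^2]\leq\sigma^2/b_k$. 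The only difference is that you defer the conditional expectation to the end rather than taking it mid-derivation, which is immaterial.
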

\begin{proof}
    Substituting one-iteration progress \eqref{Eq: One-iteration progress of x in SGDM}
    and \eqref{item: B and Lambda} into \eqref{Eq: One-iteration progress decomposition}
    , we obtain
    \begin{align}
        \label{Eq: One-iteration progress in sgdm} & \calL (\bx^{k+1}, \bB^{k+1}, \bLmbd^{k+1}) - \calL (\bx^{k}, \bB^{k}, \bLmbd^{k}) \notag                                                                                                   \\
        \leq \,                                    & K_{1}\Delta_{\bB}^{k+2}- K_{2}\Delta_{\bB}^{k+1}+ K_{3}\Delta_{\bB}^{k}+ K_{4}\Delta_{\bx}^{k+2}- (K_{5}- K_{3})\Delta_{\bx}^{k+1}+ K_{6}\Delta_{\bx}^{k}+ \frac{1}{L_{F}}\|\be^{k}\|^{2}.
    \end{align}
    Applying the definition of $\Psi_{sgdm}$ from \eqref{Eq: the Lyapunov function in SGDM}
    to \eqref{Eq: One-iteration progress in sgdm}, rearranging, and taking the
    conditional expectation on both sides, we obtain
    \small{
    \begin{align}
        \label{the descent property of Lyapunov func in SGDM first} & \EE_{\bxi^{[k+1]}}\left[\Psi_{sgdm}(\mathcal{Q}^{k+1})\right] - \EE_{\bxi^{[k]}}\left[\Psi_{sgdm}(\mathcal{Q}^{k})\right]\notag                                                                               \\
        \leq                                                        & - (C_{1}- K_{1}) \EE_{\bxi^{[k+1]}}[\Delta_{\bm{B}}^{k+2}] -(K_{2}- C_{1}- C_{2}) \EE_{\bxi^{[k+1]}}[\Delta_{\bm{B}}^{k+1}] - (C_{2}- K_{3}) \EE_{\bxi^{[k]}}[\Delta_{\bm{B}}^{k}] \notag                     \\
                                                                    & - (C_{3}-K_{4}) \EE_{\bxi^{[k+1]}}[\Delta_{\bx}^{k+2}]- (K_{5}-K_{3}-C_{3}-C_{4}) \EE_{\bxi^{[k+1]}}[\Delta_{\bx}^{k+1}] - (C_{4}- K_{6}) \EE_{\bxi^{[k]}}[\Delta_{\bx}^{k}] + \frac{\sigma^{2}}{b_{k}L_{F}}.
    \end{align}}
    Then substituting the values of coefficients in \eqref{Eq: the coefficients of the Lyapunov function in SGDM}
    and parameter conditions \eqref{Eq: the parameters conditions of dualhash-sgdm in suppl}
    into \eqref{the descent property of Lyapunov func in SGDM first}, we derive
    \begin{align*}
             & \EE_{\bxi^{[k+1]}}\left[\Psi_{sgdm}(\mathcal{Q}^{k+1})\right] - \EE_{\bxi^{[k]}}\left[\Psi_{sgdm}(\mathcal{Q}^{k})\right] \nonumber                                                                                                                                                        \\
        \leq & \, -C_{\bB}\left( \EE_{\bxi^{[k+1]}}\left[\Delta_{\bB}^{k+1}\right] + \EE_{\bxi^{[k]}}\left[\Delta_{\bB}^{k}\right] \right) -C_{\bx}\left( \EE_{\bxi^{[k+1]}}\left[\Delta_{\bm{x}}^{k+1}\right]+ \EE_{\bxi^{[k]}}\left[\Delta_{\bm{x}}^{k}\right] \right) + \frac{\sigma^{2}}{b_{k}L_{F}}.
    \end{align*}Taking the full expectation of both sides leads to \eqref{Eq: the descent coefficients of potential function in SGDM},
    thereby completing the proof. \eproof
\end{proof}
The technical proof of parameter conditions \eqref{Eq: the descent coefficients of potential function in SGDM},
the coefficients of $\Psi_{sgdm}$ in \eqref{Eq: the coefficients of the Lyapunov function in SGDM}
and $C_{\bB}> 0$, $C_{\bx}>0$ are given as follows
\begin{lemma}[Positivity of Lyapunov related coefficients]
    \label{lem:lyapunov-coef-positivity} If parameter conditions \eqref{Eq: the parameters conditions of dualhash-sgdm in suppl}
    hold, then the following inequalities are satisfied:
    \begin{align}
        \label{Eq: the positive coefficients}\begin{cases}K_{5}- K_{3}- C_{3}- C_{4}\geq \nu C_{4}, \\ C_{4}- K_{6}\geq \nu C_{4},\\ C_{4}> 0,\\ C_{\bB}> 0 ,\\ C_{2},\, C_{1}> 0, \\ C_{\bx}> 0.\end{cases}
    \end{align}
\end{lemma}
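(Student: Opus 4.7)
The plan is to verify each of the six positivity conditions by direct substitution of the explicit formulas for $K_1,\ldots,K_6$ and for the coefficients $C_1,\ldots,C_4$, combined with the parameter choices in \eqref{Eq: the parameters conditions of dualhash-sgdm in suppl}. The argument splits naturally into three levels of difficulty.

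The immediate cases are $C_1$, $C_4$, and $C_{\bx}$. Since $\alpha\in(0,(1-\nu)/2)$ we have $1-2\alpha-\nu>0$, and the numerator $\bar{L}_F+2L_F\beta^2/\alpha$ of $C_4$ is a sum of strictly positive quantities, so $C_4>0$; hence $C_{\bx}=\nu C_4>0$. Also $C_1=K_1=3\delta/\tau>0$ is immediate from $\delta,\tau>0$.

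The two $\bx$-type inequalities form the second level. After substituting $\eta_k=\eta/L_F$, $\alpha_k=\alpha$, $\beta_k=\beta$ and using the identity $\bar{L}_F=2L_F+K_3+K_4$, the pair $K_5-K_3-C_3-C_4\geq \nu C_4$ and $C_4-K_6\geq \nu C_4$ rewrites to
\begin{align*}
\frac{(1-\alpha)L_F}{\eta} &\;\geq\; \bar{L}_F+(1+\nu)C_4,\\
(1-\nu)C_4 &\;\geq\; \frac{\alpha L_F}{\eta}+2L_F\beta^2.
\end{align*}
These are two linear constraints in the unknowns $1/\eta$ and $C_4$. Taking the first with equality returns exactly the stated formula for $\eta$; the prescribed $C_4$ is constructed precisely so that the built-in identity $(1-2\alpha-\nu)C_4=\bar{L}_F+2L_F\beta^2/\alpha$ turns the second constraint into a direct consequence of the first. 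I would carry out the substitution and cancel to confirm that both hold (the first with equality, the second with slack $\nu C_4$ on the left).

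The genuinely delicate part is $C_2>0$ and $C_{\bB}=C_2-C_1>0$. I would expand
\[
K_2-K_1+K_3=(2-3\delta)\tau^{-1}-L_F-3\delta\tau(\tau^{-1}+L_F)^2+\tau L_F^2/\delta,
\]
split $-3\delta\tau(\tau^{-1}+L_F)^2=-3\delta\tau^{-1}-6\delta L_F-3\delta\tau L_F^2$, and regroup as a polynomial with $\tau^{-1}$-coefficient $2-6\delta$, constant part $-(1+6\delta)L_F$, and a tail of order $\tau L_F^2$. The bound $\tau L_F\leq\sqrt{c_\delta}\,\delta$ then damps the constant and the tail below the leading $\tau^{-1}$ term once $\delta$ is fixed below a numerical threshold and $c_\delta$ is small enough, giving $C_2>0$. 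The same manipulation for $C_{\bB}=C_2-C_1$ contributes an extra $\delta/\tau$-term to the leading coefficient and so requires a slightly sharper numerical ceiling on $\delta$. This step is the real obstacle: because $C_1$ and $C_2$ both scale as $\delta/\tau$, their difference cannot be made positive by shrinking $\tau$ alone; one must simultaneously pin down a numerical upper bound on $\delta$ and a compatible $c_\delta$, which is precisely the coupling that the parameter conditions \eqref{Eq: the parameters conditions of dualhash-sgdm in suppl} encode.
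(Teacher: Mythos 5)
Your proposal is correct and follows essentially the same route as the paper: direct substitution of the $K_i$ and $C_i$ formulas, verification of the two momentum inequalities via the prescribed $\eta$ and $C_4$ (the paper carries out exactly the cancellation you describe, with the first holding at equality), and positivity of $C_{\bB}$ by fixing a small numerical $\delta$ (the paper takes $\delta=1/6$) and bounding $t=\tau L_F$ via the resulting quadratic. Your "two linear constraints in $1/\eta$ and $C_4$" framing is a slightly cleaner way to see where the parameter formulas come from, but the underlying computation is identical.
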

\begin{proof}
    For $K_{5}- K_{3}- C_{3}- C_{4}$ and $C_{4}- K_{6}$ in \eqref{the descent property of Lyapunov func in SGDM first},
    we obtain by straightforward computations by choosing some $\nu > 0$,
    { \begin{align}K_{5}- K_{3}- C_{3}- C_{4}&= \frac{1 - \alpha_{k}}{\eta_{k}}- \bar{L}_{F}- (1 + \nu)C_{4}+ \nu C_{4}\notag\\&= (1 - \alpha_{k})\left(\frac{1}{\eta_{k}}- \frac{\bar{L}_{F}+ (1 + \nu)C_{4}}{1 - \alpha_{k}}\right) + \nu C_{4}\notag \\&\geq \nu C_{4}, \label{Eq: the scaling of the delta1}\end{align}}
    where the last inequality follows from $\alpha_{k}= \alpha$ and
    $\frac{1}{\eta_{k}}= \frac{\bar{L}_{F}+ (1 + \nu)C_{4}}{1 - \alpha}$. Then,
    one has
    \begin{align}
        C_{4}- K_{6} & = C_{4}- \frac{\alpha}{\eta_{k}}- 2L_{F}\beta_{k}^{2}\notag                                                                                                                                   \\
                     & = \frac{1}{1 - \alpha}\left( (1 - \alpha)C_{4}- \alpha\bar{L}_{F}- (1 + \nu)C_{4}\alpha - 2L_{F}\beta^{2}(1 - \alpha) \right)\notag                                                           \\
                     & \geq \frac{1}{1 - \alpha}\left( (1 - 2\alpha - \nu)C_{4}- \alpha \left(\bar{L}_{F}+ \frac{2L_{F}\beta^{2}}{\alpha}\right) \right) + \nu C_{4}+ \frac{2L_{F}\beta^{2}\alpha}{1 - \alpha}\notag \\
                     & \geq \nu C_{4}, \label{Eq: the scaling of the delta2}
    \end{align}
    where the last inequality follows from $\alpha \in (0, \frac{1 - \nu}{2})$
    and
    $C_{4}= \frac{\bar{L}_{F}+ \frac{2L_F \beta^2}{\alpha}}{1 - 2\alpha - \nu}> 0$.
    We rearrange $\frac{1}{\eta_{k}}$ using the definition of $C_{4}$ and obtain
    $\eta_{k}= \frac{\eta}{L_{F}}$ where $\eta = \frac{(1 - 2\alpha - \nu)/2}{\frac{\bar{L}_{F}}{L_{F}}
    + \frac{(1 + \nu)\beta^{2}}{(1 - \alpha)\alpha}}$.

    Next, we verify that the coefficients of $\Psi_{sgdm}$ satisfying \eqref{Eq: the coefficients of the Lyapunov function in SGDM}
    and $\tau$ satisfying \eqref{Eq: the parameters conditions of dualhash-sgdm in suppl}
    ensure that $C_{\bB},\, C_{\bx}> 0$. Let us denote $t = \tau L_{F}$. Choosing
    $\delta>0$, we have
    \begin{align*}
        C_{\bB}= \frac{L_{F}}{2}\left(\frac{2 - 6\delta}{t}- \left(3\delta + \frac{1}{\delta}\right)t- (6\delta+1) \right) \geq 0.
    \end{align*}
    If we choose $\delta = \frac{1}{6}> 0$, then
    \[
        \frac{2}{L_{F}}C_{\bB}= \frac{2}{t}- \frac{13}{2}t - 2 \geq 0.
    \]
    By selecting
    $\tau \in \left(0, 6(\frac{\sqrt{30}-2}{13})\delta \frac{1}{L_{F}}\right]$, it
    is sufficient to ensures $C_{\bB}> 0$, and consequently, $C_{2},\,C_{1}> 0$.
    Therefore, for some $\delta >0$, setting
    $\tilde{\delta}= c_{\delta}\delta^{2}$ and ensuring
    $\tau^{2}L_{F}^{2}\leq \tilde{\delta}$ guarantee positive coefficients for
    $\Psi_{sgdm}$ in \eqref{Eq: the coefficients of the Lyapunov function in SGDM}
    and $C_{\bB}, \, C_{\bx}>0$. \eproof
\end{proof}

Next, we derive an upper error bound on
$\text{dist}^{2}(\bm{0}, \partial \calL(\bx^{k}, \bB^{k}, \bm{\Lambda}^{k}))$ of
the iterates generated by \Cref{alg:our_algorithm_sgdm} based on
\begin{align}
    \label{Eq: the stationarity error bound in proof}\text{dist}^{2}(\bm{0}, \partial \calL(\bx^{k}, \bB^{k}, \bm{\Lambda}^{k})) \leq \| \bd^{k}\|^{2}=\|\bd_{\bx}^{k}\|^{2}+ \| \bd_{\bB}^{k}\|^{2}+ \| \bd_{\bm{\Lambda}}^{k}\|^{2}.
\end{align}

\begin{lemma}[Stationarity error bound]
    \label{Lemma: Stationarity error bound in SGDM} Under \Cref{Ass: main assumptions},
    for \(\forall\) $k \geq 1$, it holds that
    \begin{align}\label{Eq: the stationarity error bound in SGDM Lemma}
        \EE [\text{dist}^{2}(\bm{0}, \partial \calL(\bx^{k}, \bB^{k}, \bm{\Lambda}^{k}))] \leq S_{1}\EE[ \Delta_{\bB}^{k+1}] + S_{2}\EE[ \Delta_{\bB}^{k}] + S_{3}^{k}\EE[ \Delta_{\bm{x}}^{k+1}] + S_{4}^{k}\EE [\Delta_{\bm{x}}^{k}] + \frac{3\sigma^{2}}{b_{k}L_{F}},
    \end{align}
    where the coefficients are given by
    \begin{align}
        \label{Eq: the coefficients of stationarity error bound in SGDM}\begin{cases}S_{1}&= 4\tau^{-2}+ 12,\\ S_{2}&= 4(1 + 3 (1 + \tau L_{F})^{2}),\\ S_{3}^{k}&= 4(\eta_{k}^{-2}+ L_{F}^{2}+ 3 (\tau L_{F})^{2}),\\ S_{4}^{k}&= 6 ( L_{F}\beta_{k}^{2}+ 6 \eta_{k}^{-2}\alpha_{k}^{2}).\end{cases}
    \end{align}
\end{lemma}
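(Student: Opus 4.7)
The plan is to bound each of the three components $\|\bd_\bx^k\|^2$, $\|\bd_\bB^k\|^2$, $\|\bd_{\bm{\Lambda}}^k\|^2$ in \eqref{Eq: the stationarity error bound in proof} separately, using the optimality conditions induced by the corresponding updates, and then aggregate.

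First I would handle $\bd_\bB^k = \nabla_\bB F(\bx^k,\bB^k) + \bm{\Lambda}^k$. The idea is to eliminate $\bm{\Lambda}^k$ via the $\bB$-update at iteration $k+1$, which by \eqref{Eq: the update of B} gives $\nabla_\bB F(\bx^{k+1},\bB^k) + \bm{\Lambda}^k = -\tau^{-1}(\bB^{k+1}-\bB^k)$. Adding and subtracting $\nabla_\bB F(\bx^{k+1},\bB^k)$ yields the decomposition $\bd_\bB^k = [\nabla_\bB F(\bx^k,\bB^k) - \nabla_\bB F(\bx^{k+1},\bB^k)] - \tau^{-1}(\bB^{k+1}-\bB^k)$. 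Applying $(a+b)^2 \le 2(a^2+b^2)$ together with the $L_F$-smoothness of $F$ in \Cref{Ass: main assumptions}(ii), we obtain a clean bound on $\|\bd_\bB^k\|^2$ in terms of $\Delta_\bB^{k+1}$ and $\Delta_\bx^{k+1}$. This accounts for the $4\tau^{-2}$ inside $S_1$ and $4L_F^2$ inside $S_3^k$.

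Next I would handle $\bd_{\bm{\Lambda}}^k = \bB^k - \calG_{\bm{\Lambda}}^k$. Substituting the optimality relation \eqref{Eq: the optimality condition of lbda} directly gives $\bd_{\bm{\Lambda}}^k = -(\bB^k - \bB^{k-1}) + \tau(\bm{\Lambda}^k - \bm{\Lambda}^{k-1})$. After expanding the square, the residual $\|\bm{\Lambda}^k - \bm{\Lambda}^{k-1}\|^2$ is controlled by applying \Cref{Lemma: dual boundedness} with the index shift $k \mapsto k-1$, producing three terms proportional to $\|\bB^{k+1}-\bB^k\|^2$, $\|\bB^k-\bB^{k-1}\|^2$, and $\|\bx^{k+1}-\bx^k\|^2$. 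Converting norms to $\Delta$-quantities contributes the remaining $12$ in $S_1$, the full $4(1 + 3(1+\tau L_F)^2) = S_2$, and the $12(\tau L_F)^2$ piece of $S_3^k$.

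Then I would handle $\bd_\bx^k = \nabla_\bx F(\bx^k,\bB^k)$, which is the only component carrying stochastic error. Using the $\bx$-update \eqref{Eq: the update of x in SGDM}, write $\bG^k = \eta_k^{-1}(\by^k - \bx^{k+1})$ and recall $\bG^k = \nabla_\bx F(\bz^k,\bB^k) + \be^k$ from \eqref{Eq: the gradient error in SGDM}. Adding and subtracting $\nabla_\bx F(\bz^k,\bB^k)$ gives a three-term decomposition: a smoothness gap $\nabla_\bx F(\bx^k,\bB^k) - \nabla_\bx F(\bz^k,\bB^k)$ (bounded using $L_F$-smoothness together with $\|\bx^k-\bz^k\|^2 = \beta_k^2\|\bx^k-\bx^{k-1}\|^2$ from \Cref{Pp: relations of xyz}(ii)), a momentum-driven term $\eta_k^{-1}(\by^k - \bx^{k+1})$ (further split via $\by^k - \bx^{k+1} = (\bx^k - \bx^{k+1}) + \alpha_k(\bx^k - \bx^{k-1})$), and the noise $\be^k$. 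Squaring, using Cauchy--Schwarz, and invoking \eqref{Eq: the initial stochastic error in SGDM} after taking conditional expectation produces the $\eta_k^{-2}$ contribution to $S_3^k$, the full $S_4^k$, and the $\sigma^2/(b_k L_F)$ noise term.

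Finally, I would take full expectation of the sum of the three bounds and collect coefficients of $\Delta_\bB^{k+1}$, $\Delta_\bB^k$, $\Delta_\bx^{k+1}$, $\Delta_\bx^k$ to match $S_1, S_2, S_3^k, S_4^k$ as displayed in \eqref{Eq: the coefficients of stationarity error bound in SGDM}; the factor of $3$ in front of $\sigma^2/(b_k L_F)$ arises from the Cauchy--Schwarz constant used in the $\bd_\bx^k$ decomposition. The main obstacle is the careful bookkeeping across the three components, in particular the correct index shift in \Cref{Lemma: dual boundedness} so that $\|\bm{\Lambda}^k - \bm{\Lambda}^{k-1}\|^2$ (rather than $\|\bm{\Lambda}^{k+1}-\bm{\Lambda}^k\|^2$) is the quantity being bounded, and choosing the decomposition of $\bd_\bB^k$ that uses the next-step update of $\bB$ so that the $\tau^{-2}\|\bB^k-\bB^{k-1}\|^2$ term never appears and the cleaner dependence on $\Delta_\bB^{k+1}$ is preserved.\eproof
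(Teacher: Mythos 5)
Your proposal is correct and follows essentially the same route as the paper's proof: the same three-way decomposition of $\|\bd^k\|^2$, the same elimination of $\bm{\Lambda}^k$ via the $(k{+}1)$-th $\bB$-update for $\bd_\bB^k$, the same use of \eqref{Eq: the optimality condition of lbda} plus Lemma~\ref{Lemma: dual boundedness} shifted to index $k-1$ for $\bd_{\bm{\Lambda}}^k$, and the same three-term split (smoothness gap, momentum term, noise) with constant $3$ for $\bd_\bx^k$. No gaps.
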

\begin{proof}
    For estimating $\| \bd_{\bx}^{k}\|^{2}$, using the optimality condition of
    $\bx^{k}$, we obtain
    \begin{align}
             & \|\bd_{\bx}^{k}\|^{2}\notag                                                                                                                                                                                                                   \\
        =    & \,\|\nabla_{\bx}F(\bx^{k}, \bm{B}^{k}) \|^{2}\notag                                                                                                                                                                                           \\
        =    & \,\eta_{k}^{-2}\left\| \eta_{k}\nabla_{\bx}F(\bx^{k}, \bm{B}^{k}) + 0\right\|^{2}\notag                                                                                                                                                       \\
        =    & \, \eta_{k}^{-2}\left\|\eta_{k}\left( \nabla_{\bx}F(\bx^{k}, \bm{B}^{k}) - \bG^{k}\right) + \bm{y}^{k}- \bx^{k+1}\right\|^{2}\notag                                                                                                           \\
        =    & \,\eta_{k}^{-2}\bigg\|\eta_{k}\Big( \nabla_{\bx}F(\bx^{k}, \bm{B}^{k}) - \nabla_{\bx}F(\bm{z}^{k}, \bm{B}^{k}) + \nabla_{\bx}F(\bm{z}^{k}, \bm{B}^{k})- \bG^{k}\Big) + \bm{y}^{k}- \bx^{k}+ \bx^{k}- \bx^{k+1}\bigg\|^{2}\label{eq:d_x_ineq1} \\
        \leq & \, 3 \|\nabla_{\bx}F(\bx^{k}, \bm{B}^{k}) - \nabla_{\bx}F(\bm{z}^{k}, \bm{B}^{k})\|^{2}+ 3\eta_{k}^{-2}\|\bx^{k+1}- \bx^{k}+ \bx^{k}- \bm{y}^{k}\|^{2}+ 3 \|\be^{k}\|^{2}\label{eq:d_x_ineq2}                                                 \\
        \leq & \, 3L_{F}^{2}\|\bx^{k}- \bm{z}^{k}\|^{2}+ 6\eta_{k}^{-2}\left(\|\bx^{k+1}- \bx^{k}\|^{2}+ \|\bx^{k}- \bm{y}^{k}\|^{2}\right) + 3 \|\be^{k}\|^{2},\label{eq:d_x_ineq3}                                                                      
    \end{align}
    where \eqref{eq:d_x_ineq1} is deduced by using the fact that
    $(a + b+ c)^{2}\leq 3(a^{2}+ b^{2}+c^{2})$ and \eqref{eq:d_x_ineq2} is deduced
    by the smoothness of $F(\bx, \bm{B})$ and $(a + b)^{2}\leq 2(a^{2}+ b^{2})$.
    Then applying \Cref{Pp: relations of xyz} to \eqref{eq:d_x_ineq3} concludes
    \begin{align}
        \label{Eq: dx}\| \bd_{\bx}^{k}\|^{2}\leq 6( L_{F}\beta_{k}^{2}+ 2 \eta_{k}^{-2}\alpha_{k}^{2}) \Delta_{\bx}^{k}+ 12 \eta_{k}^{-2}\Delta_{\bx}^{k+1}+ 3 \| \be^{k}\|^{2}.
    \end{align}
    For estimating $\|\bd^{k}_{\bB}\|^{2}$, using the optimality condition of $\bB
    ^{k}$ from \eqref{Eq: the optimality condition of Bk} and the smoothness of
    $F$, we obtain
    \begin{align}
        \|\bd_{\bB}^{k}\|^{2} & = \| \nabla_{\bB}F(\bx^{k+1}, \bB^{k})+ \bm{\Lambda}^{k}+ \nabla_{\bB}F(\bx^{k}, \bB^{k}) - \nabla_{\bB}F(\bx^{k+1}, \bB^{k}) \|^{2}\notag \\
                              & \leq4 \tau^{-2}\Delta^{k+1}_{\bB}+ 4L_{F}^{2}\Delta_{\bx}^{k+1}\label{Eq: dB},
    \end{align}where the last inequality follows from the fact
    $(a + b)^{2}\leq 2(a^{2}+ b^{2})$.

    For estimating $\|\bd^{k}_{\bm{\Lambda}}\|^{2}$, using the optimality condition
    of $\bm{\Lambda}^{k}$ from \eqref{Eq: the optimality condition of lbda}, we obtain
    \begin{align}
        \|\bd^{k}_{\bm{\Lambda}}\|^{2} & \leq 2\tau^{2}\|\bm{\Lambda}^{k}- \bm{\Lambda}^{k-1}\|^{2}+ 2\|\bB^{k}- \bB^{k-1}\|^{2}.
    \end{align}

    Then, substituting \eqref{Eq: dual boundedness in proof} into the above inequality yields that
    \begin{align}
        \label{Eq: dLambda}\|\bd^{k}_{\bm{\Lambda}}\|^{2}\leq 12 \Delta_{\bB}^{k+1}+ 4(1 + 3 (1 + \tau L_{F})^{2})\Delta_{\bB}^{k}+ 12(\tau L_{F})^{2}\Delta_{\bx}^{k+1}.
    \end{align}

    Finally, substituting \eqref{Eq: dx}, \eqref{Eq: dB}, and \eqref{Eq: dLambda}
    into \eqref{Eq: the stationarity error bound in proof}, and taking the full expectation
    on both sides, yields \eqref{Eq: the stationarity error bound in SGDM Lemma},
    completing the proof. \eproof
\end{proof}

We now establish the convergence rate of \Cref{alg:our_algorithm_sgdm} as follows.
We set $T$ as the maximum number of iterations and denote 
    $L_{\Delta}= \frac{\max\{S_{1}, S_{2}\} + \max\{S_{3}, S_{4}\}}{\min\{C_{\bB},
    C_{\bx}\}}$, $C_{\sigma}= \frac{L_{\Delta}}{L_{F}}+ 3$, and
    $\Delta_{1}= \Psi_{\mathrm{sgdm}}^{1}- \Psi_{\mathrm{sgdm}}(\mathcal{Q}^{*})$
    are constants independent of $T$, where \(\Psi_{\mathrm{sgdm}}^{1} = \Psi_{\mathrm{sgdm}}(\calQ^1)\).
\begin{lemma}[Convergence rate of \Cref{alg:our_algorithm_sgdm}]
    \label{lemma: convergence rate sgdm} Under the conditions of Lemma \ref{Lemma:
    approximate sufficient descent property in SGDM}, for any $T \geq 1$, there exists
    $R$ uniformly selected from $\{2,\ldots,T+1\}$ such that $(\bx^{R}, \bB
    ^{R}, \bLmbd^{R})$ satisfies \eqref{equ:esp} with
    \begin{align}
        \label{Eq: the convergence rate of sgdm}\mathbb{E}[\mathrm{dist}^{2}(0, \partial \mathcal{L}(\bx^{R}, \bB^{R}, \bLmbd^{R}))] \leq \frac{L_{\Delta}\Delta_{1}}{T}+ C_{\sigma}\frac{\sigma^{2}}{b_{k}}.
    \end{align}
\end{lemma}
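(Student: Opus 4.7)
The plan is a standard telescoping-and-randomization argument that chains the Lyapunov descent from \Cref{Lemma: approximate sufficient descent property in SGDM} with the stationarity error bound from \Cref{Lemma: Stationarity error bound in SGDM}, exploiting the fact that $R$ is uniform on $\{2,\ldots,T+1\}$.

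First, I would telescope the descent inequality
\[
\EE[\Psi_{\mathrm{sgdm}}(\calQ^{k+1})] - \EE[\Psi_{\mathrm{sgdm}}(\calQ^{k})] \leq -C_{\bB}\EE[\Delta_{\bB}^{k+1}+\Delta_{\bB}^{k}] - C_{\bx}\EE[\Delta_{\bx}^{k+1}+\Delta_{\bx}^{k}] + \frac{\sigma^2}{b_k L_F}
\]
from $k=1$ through $k=T+1$. Using $\Psi_{\mathrm{sgdm}}(\calQ^{T+2}) \geq \Psi_{\mathrm{sgdm}}^*$, the constant batch $b_k = b = c_b T$ from \eqref{Eq: the parameters conditions of dualhash-sgdm}, and positivity of $C_\bB, C_\bx$ from \Cref{lem:lyapunov-coef-positivity}, this produces a uniform bound
\[
\sum_{k=1}^{T+1}\EE[\Delta_\bB^{k+1}+\Delta_\bB^k] + \sum_{k=1}^{T+1}\EE[\Delta_\bx^{k+1}+\Delta_\bx^k] \leq \frac{1}{\min\{C_\bB,C_\bx\}}\Big(\Delta_1 + \frac{(T+1)\sigma^2}{b L_F}\Big).
\]

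Second, because $\eta_k$ is constant we have $S_3^k \equiv S_3$ and $S_4^k \equiv S_4$, so summing the stationarity bound over $k=2,\ldots,T+1$ and crudely upper-bounding each pair $S_i\Delta^{k+1}+S_j\Delta^k$ by $\max\{S_i,S_j\}(\Delta^{k+1}+\Delta^k)$ gives
\[
\sum_{k=2}^{T+1}\EE[\mathrm{dist}^2(\bm{0}, \partial \calL(\bx^k,\bB^k,\bLmbd^k))] \leq \big(\max\{S_1,S_2\} + \max\{S_3,S_4\}\big)\cdot U + \frac{3T\sigma^2}{b L_F},
\]
where $U$ is the sum bounded in step one (after noting $\sum_{k=2}^{T+1} \leq \sum_{k=1}^{T+1}$). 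Since $R$ is uniform on a set of size $T$, the expectation on the left of \eqref{Eq: the convergence rate of sgdm} equals $1/T$ times the sum above. Dividing by $T$, invoking the definitions $L_\Delta = (\max\{S_1,S_2\}+\max\{S_3,S_4\})/\min\{C_\bB,C_\bx\}$ and $C_\sigma = L_\Delta/L_F + 3$, and absorbing the $(T+1)/T$ factor on the noise term into the constant yields exactly the advertised bound $L_\Delta \Delta_1/T + C_\sigma \sigma^2/b$.

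The main bookkeeping obstacle is the index shift between the two sums: the Lyapunov descent naturally telescopes over $k=1,\ldots,T$, while the stationarity error must be evaluated at iterates indexed by the sampling range $\{2,\ldots,T+1\}$, which introduces boundary $\Delta_\bB^{T+2},\Delta_\bx^{T+2}$ terms on the right-hand side. Extending the telescope by one iteration and dominating the shifted sum by the full one absorbs these cleanly, but one has to check that the two $\sigma^2$ sources — the $(T+1)\sigma^2/(bL_F)$ inherited from the descent and the $3T\sigma^2/(bL_F)$ from the stationarity bound — combine, after division by $T$, into precisely $C_\sigma \sigma^2/b$ with no residual factor of $T$. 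Once this is done, choosing $T = \lceil (L_\Delta \Delta_1 + C_\sigma \sigma^2/c_b)/\varepsilon^2\rceil$ enforces \eqref{equ:esp}, and the SFO count $T b = c_b T^2 = \calO(\varepsilon^{-4})$ follows immediately.
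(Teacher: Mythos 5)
Your proposal is correct and follows essentially the same route as the paper: substitute the Lyapunov descent bound of \Cref{Lemma: approximate sufficient descent property in SGDM} into the stationarity error bound of \Cref{Lemma: Stationarity error bound in SGDM}, telescope, divide by $T$, and interpret the average as the expectation over the uniformly sampled index $R$. Your handling of the index offset between the telescoping range and the sampling range $\{2,\dots,T+1\}$ is in fact slightly more careful than the paper's own shift $R = R'+1$, but it changes nothing of substance.
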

\begin{proof}
    From \eqref{Eq: the stationarity error bound in SGDM Lemma}, we obtain \small{ \begin{align}\label{Eq: the stationarity error bound in SGDM rearrange}\mathbb{E}[\mathrm{dist}^{2}(0, \partial \mathcal{L}(\bx^{k}, \bB^{k}, \bLmbd^{k}))] \leq ( \max\{S_{1}, S_{2}\} + \max\{S_{3}, S_{4}\}) \EE [\Delta_{\bB}^{k+1}+ \Delta_{\bB}^{k}+ \Delta_{\bx}^{k+1}+ \Delta_{\bx}^{k}] + \frac{3 \sigma^{2}}{b_{k}L_{F}}.\end{align}}
    Similarly, from \eqref{Eq: the descent property of Lyapunov func in SGDM}, we
    obtain
    \[
        \EE [\Psi_{sgdm}(\mathcal{Q}^{k+1})] - \EE[\Psi_{sgdm}(\mathcal{Q}^{k})]
        \leq -\min \{ C_{\bB}, C_{\bx}\}\EE[\Delta_{\bB}^{k+1}+ \Delta_{\bB}^{k}+
        \Delta_{\bm{x}}^{k+1}+ \Delta_{\bm{x}}^{k}] + \frac{\sigma^{2}}{b_{k}L_{F}},
    \]
    which yields that
    \[
        \EE[\Delta_{\bB}^{k+1}+ \Delta_{\bB}^{k}+ \Delta_{\bm{x}}^{k+1}+ \Delta_{\bm{x}}
        ^{k}] \leq \frac{1}{\min \{ C_{\bB}, C_{\bx}\}}\left( \EE [\Psi_{sgdm}(\mathcal{Q}
        ^{k})] - \EE[\Psi_{sgdm}(\mathcal{Q}^{k+1})] + \frac{\sigma^{2}}{b_{k}L_{F}}
        \right)
    \]
    Substituting this inequality into \eqref{Eq: the stationarity error bound in SGDM rearrange},
    summing over $k = 1, \cdots,T$, and dividing by $T$, we obtain
    \[
        \frac{1}{T}\sumT \mathbb{E}[\mathrm{dist}^{2}(0, \partial \mathcal{L}(\bx
        ^{k}, \bB^{k}, \bLmbd^{k}))] \leq \frac{L_{\Delta}}{T}\sumT
        \left(\EE [\Psi_{\mathrm{sgdm}}(\mathcal{Q}^{k})] - \EE [\Psi_{\mathrm{sgdm}}
        (\mathcal{Q}^{k+1})] \right) + ( 3 + L_{\Delta}) \frac{\sigma^{2}}{b_{k}},
    \], which yields the conclusion. Denoting
    $a_{k}= \mathbb{E}[\mathrm{dist}^{2}(0, \partial \mathcal{L}(\bx^{k}, \bB
    ^{k}, \bLmbd^{k}))]$, we have
    \[
        \frac{1}{T}\sumT a_{k}\leq \frac{L_{\Delta}\Delta_{1}}{T}+ ( 3 + L_{\Delta}
        ) \frac{\sigma^{2}}{b_{k}}.
    \]
    By uniformly selecting $R' \in \{1, \ldots, T\}$ and letting $R = R' + 1$,
    we obtain
    \[
        \mathbb{E}[\mathrm{dist}^{2}(0, \partial \mathcal{L}(\bx^{R}, \bB^{R}
        , \bLmbd^{R}))] \leq \frac{L_{\Delta}\Delta_{1}}{T}+ ( 3 + L_{\Delta}
        ) \frac{\sigma^{2}}{b_{k}},
    \] and complete the proof.\eproof
\end{proof}
From \eqref{Eq: the convergence rate of sgdm}, to achieve $\calO(T^{-1})$
convergence rate, we need to set $b_{k}= \Theta (T)$. Next, we set
$b_{k}= b = c_{b}T$ for all $k \geq 1$ in this way and establish the complexity
results of \Cref{alg:our_algorithm_sgdm} with the parameter conditions \eqref{Eq: the parameters conditions of dualhash-sgdm in suppl}.
\subsubsection{The proof of \Cref{Theorem: the oracle complexity in SGDM}}
\setcounter{proof}{0}
\begin{proof}
    When $b_{k}= b = c_{b}T$, we have
    \[
        \mathbb{E}[\mathrm{dist}^{2}(0, \partial \mathcal{L}(\bx^{R}, \bB^{R}
        , \bLmbd^{R}))] \leq \frac{1}{T}\left( L_{\Delta}\Delta_{1}+ C_{\sigma}
        \frac{\sigma^{2}}{c_{b}}\right).
    \]
    With the conditions on $T$ in \eqref{Eq: the iterate number upper bound of algorithm SGDM},
    this gives
    $\mathbb{E}[\mathrm{dist}^{2}(0, \partial \mathcal{L}(\bx^{R}, \bB^{R}
    , \bLmbd^{R}))] \leq \varepsilon^{2}$. Then, the oracle complexity
    of SFO is
    \[
        \text{Complexity}= (T-1)b_{k}=\calO(T^{2}) = \calO(\varepsilon^{-4}).
    \]
\end{proof}

\subsection{Complexity analysis of DualHash-StoRM (\texorpdfstring{\Cref{alg:our_algorithm_storm}}{Algorithm
2})}
\setcounter{lemma}{0}
\setcounter{proof}{0}
\label{Appendix: the complexity analysis of DualHash-StoRM} Let us define the gradient
error in the $k$-th iteration
\begin{align}
    \label{Eq: the gradient error in STORM}\be^{k}= \bD^{k}- \nabla_{\bx}F(\bx^{k}, \bB^{k}),
\end{align}
where $\bD^{k}$ is defined as \eqref{Eq: the stochastic approximate in STORM}. It
is straightforward to estimate the initial error of stochastic gradient under \Cref{Ass: main assumptions} (iii) as follows
\begin{align}
    \label{Eq: the initial stochastic error in STORM}\EE \left[ \| \be^{1}\|^{2}\right] \leq \frac{\sigma^{2}}{b_{1}}.
\end{align}
\subsubsection{Main lemmas}
Similarly, we first analyze how the Lagrangian function \eqref{Eq: simplified_lagrangian}
changes on one iteration of progress on $\bx$ from \eqref{Eq: the update of x in STORM}.
\begin{lemma}[One-iteration progress on $\bx$ \cite{xu2023momentum}]
    \label{Lemma: One-iteration progress of x in STORM} Consider the sequence $\{
    \bx^{k}, \bB^{k}, \bm{\Lambda}^{k}\}_{k \in \NN}$ generated by \Cref{alg:our_algorithm_storm}.
    Under \Cref{Ass: main assumptions} (i)-(ii), for $\forall$ $k \geq 1$, it
    holds that
    \begin{align}
        \label{Eq: the recursive relation of L on x}\calL (\bx^{k+1}, \bB^{k}, \bLmbd^{k}) - \calL (\bx^{k}, \bB^{k}, \bLmbd^{k}) \leq \frac{\eta_{k}}{2}\| \be^{k}\|^{2}- (\eta_{k}^{-1}- L_{F})\Delta_{\bx}^{k+1}.
    \end{align}
\end{lemma}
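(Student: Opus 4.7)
The plan is to observe that the Lagrangian difference reduces to a difference of $F$-values only, exploit the $L_F$-smoothness of $F$ in its first argument, and then absorb the stochastic error using Young's inequality with a parameter chosen so that the resulting bound has exactly the form claimed.

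First I would note that because $\bB^k$ and $\bLmbd^k$ are held fixed during the $\bx$-update, the terms $\langle \bB^k, \bLmbd^k\rangle$ and $h^*(\bLmbd^k)$ in \eqref{Eq: simplified_lagrangian} drop out, so
\begin{align*}
\calL(\bx^{k+1}, \bB^k, \bLmbd^k) - \calL(\bx^k, \bB^k, \bLmbd^k) = F(\bx^{k+1}, \bB^k) - F(\bx^k, \bB^k).
\end{align*}
Then I would apply the smoothness of $F$ in $\bx$ from \eqref{Eq: F smoothness with x} to get an upper bound involving $\langle \nabla_{\bx} F(\bx^k, \bB^k),\, \bx^{k+1}-\bx^k\rangle$ and $\tfrac{L_F}{2}\|\bx^{k+1}-\bx^k\|^2$.

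Next I would plug in the update rule \eqref{Eq: the update of x in STORM}, which via the definition $\be^k = \bD^k - \nabla_{\bx} F(\bx^k, \bB^k)$ in \eqref{Eq: the gradient error in STORM} gives $\bx^{k+1} - \bx^k = -\eta_k(\nabla_{\bx} F(\bx^k, \bB^k) + \be^k)$, equivalently $\nabla_{\bx} F(\bx^k, \bB^k) = -\eta_k^{-1}(\bx^{k+1}-\bx^k) - \be^k$. Substituting this into the inner product produces the term $-\eta_k^{-1}\|\bx^{k+1}-\bx^k\|^2$ plus a cross term $-\langle \be^k, \bx^{k+1}-\bx^k\rangle$.

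Finally, I would apply Young's inequality with weight $\eta_k$, namely $-\langle \be^k, \bx^{k+1}-\bx^k\rangle \leq \tfrac{\eta_k}{2}\|\be^k\|^2 + \tfrac{1}{2\eta_k}\|\bx^{k+1}-\bx^k\|^2$. Combining everything and collecting the coefficients of $\|\bx^{k+1}-\bx^k\|^2$ yields $-\tfrac{1}{2}(\eta_k^{-1} - L_F)\|\bx^{k+1}-\bx^k\|^2 = -(\eta_k^{-1}-L_F)\Delta_{\bx}^{k+1}$, which is exactly the right-hand side of \eqref{Eq: the recursive relation of L on x}. There is no real obstacle here; the only subtle point is choosing the Young's inequality parameter to be $\eta_k$ so that the $\|\bx^{k+1}-\bx^k\|^2$ coefficient telescopes cleanly with the smoothness term, and being careful to note that this lemma is purely deterministic (no expectation is taken), so the STORM-specific statistical properties of $\be^k$ play no role until later lemmas bound $\EE\|\be^k\|^2$.
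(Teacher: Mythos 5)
Your proof is correct and follows exactly the argument the paper intends (the lemma is cited from the literature, but the general-parameter form $\frac{s}{2}\|\be^k\|^2 - (-\frac{1}{s}+2\eta_k^{-1}-L_F-K_3)\Delta_{\bx}^{k+1}$ appearing in the subsequent descent lemma confirms the same smoothness-plus-Young's-inequality derivation with $s=\eta_k$). The algebra checks out: $-\eta_k^{-1}+\frac{1}{2\eta_k}+\frac{L_F}{2} = -\frac{1}{2}(\eta_k^{-1}-L_F)$, giving precisely the claimed coefficient of $\Delta_{\bx}^{k+1}$.
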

In the next lemma, we establish a recursive relationship of a Lyapunov function
and a Lyapunov sequence specific to \Cref{alg:our_algorithm_storm}:
\begin{align}
     & \Psi_{storm}(\mathcal{Q}^{k}) = \calL(\bx^{k}, \bm{B}^{k}, \bLmbd^{k}) - C_{1}\Delta_{\bB}^{k+1}+ C_{2}\Delta_{\bB}^{k}- C_{3}\Delta_{\bm{x}}^{k+1}\label{Eq: the Lyapunov function in STORM}, \\
     & \{ \calQ^{k}\}_{k \in \NN}= \{\bx^{k}, \bB^{k}, \bm{\Lambda}^{k}, \bB^{k+1}, \bB^{k-1}, \bx^{k+1}\}_{k \in \NN}\label{Eq: the Lyapunov sequence in STORM},
\end{align}
where coefficients $C_{1},\, C_{2},\, C_{3}$ independent of $k$ are chosen as \eqref{Eq: the coefficients of the Lyapunov function in SGDM}.

\begin{lemma}
    \label{Lemma: approximate sufficient descent property} Under \Cref{Ass: main
    assumptions} (i)-(ii) and with parameter $\tau$ in \Cref{alg:our_algorithm_storm}
    set according to \eqref{Eq: the parameters conditions of dualhash-sgdm in suppl}, 
    for $\forall$ $k \geq 1$, it holds that
    \begin{align}
        \label{Eq: the descent property of Lyapunov func in storm}\Psi_{storm}(\mathcal{Q}^{k+1}) - \Psi_{storm}(\mathcal{Q}^{k}) \leq -C_{\bB}\left( \Delta_{\bB}^{k+1}+ \Delta_{\bB}^{k}\right)- C_{\bx}^{k}\Delta_{\bm{x}}^{k+1}+ \frac{\eta_{k}}{2}\| \be^{k}\|^{2},
    \end{align}
    where the descent coefficients are given by:
    \begin{align}
        \label{Eq: the descent coefficients of potential function}\begin{cases}C_{\bB}&= C_{2}- C_{1}= \frac{K_2 - K_3 -K_1}{2}>0, \\ C_{\bx}^{k}&=\eta_{k}^{-1}- \tilde{L}_{F}, \text{ where }\tilde{L}_{F}= L_{F}+ K_{3}+ K_{4}.\end{cases}
    \end{align}
    Furthermore, the following inequalities hold
    \begin{align}
        \label{Eq: bound B and x in storm}\left\{ \begin{array}{ll}C_{\bB} \left( \Delta_{\bB}^{k+1} + \Delta_{\bB}^k \right) \leq \Psi_{storm}(\mathcal{Q}^k) - \Psi_{storm}(\mathcal{Q}^{k+1}) + \frac{\eta_{k}}{2} \| \be^k\|^2, \\ C_{\bx}^k \Delta_{\bm{x}}^{k+1} \leq \Psi_{storm}(\mathcal{Q}^k) - \Psi_{storm}(\mathcal{Q}^{k+1}) +\frac{\eta_{k}}{2} \| \be^k\|^2.\end{array} \right.
    \end{align}
\end{lemma}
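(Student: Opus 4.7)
The plan is to combine the one-iteration progress bounds for the two update blocks and then regroup terms using the Lyapunov function's coefficients so that the $\Delta_{\bB}^{k+2}$, $\Delta_{\bx}^{k+2}$ terms get absorbed into the ``next iterate'' part of $\Psi_{storm}(\mathcal{Q}^{k+1})$. Concretely, I would substitute \eqref{item: B and Lambda} (from \Cref{Lemma: One-iteration progress of B and Lambda}) and \eqref{Eq: the recursive relation of L on x} (from \Cref{Lemma: One-iteration progress of x in STORM}) into the decomposition \eqref{Eq: One-iteration progress decomposition}. Observe that, unlike the SGDM case, the STORM $\bx$-update introduces no $\Delta_{\bx}^{k+2}$ or $\Delta_{\bx}^k$ terms, which is exactly why the Lyapunov function \eqref{Eq: the Lyapunov function in STORM} omits the $C_4 \Delta_{\bx}^k$ correction.

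After substitution, I would form the difference $\Psi_{storm}(\mathcal{Q}^{k+1}) - \Psi_{storm}(\mathcal{Q}^{k})$ and collect coefficients of each $\Delta$. This yields
\[
(K_1-C_1)\Delta_{\bB}^{k+2} + (C_1+C_2-K_2)\Delta_{\bB}^{k+1} + (K_3-C_2)\Delta_{\bB}^{k} + (K_4-C_3)\Delta_{\bx}^{k+2} + \bigl(K_3+C_3-(\eta_k^{-1}-L_F)\bigr)\Delta_{\bx}^{k+1} + \tfrac{\eta_k}{2}\|\be^k\|^2.
\]
With the choices $C_1=K_1$, $C_2=\tfrac12(K_2-K_1+K_3)$, $C_3=K_4$ from \eqref{Eq: the coefficients of the Lyapunov function in SGDM}, the coefficients of $\Delta_{\bB}^{k+2}$ and $\Delta_{\bx}^{k+2}$ vanish, while the coefficients of $\Delta_{\bB}^{k+1}$ and $\Delta_{\bB}^{k}$ both collapse to $-\tfrac12(K_2-K_1-K_3)=-C_{\bB}$, and the coefficient of $\Delta_{\bx}^{k+1}$ becomes $-(\eta_k^{-1}-(L_F+K_3+K_4))=-C_{\bx}^k$. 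This is exactly \eqref{Eq: the descent property of Lyapunov func in storm}.

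The main obstacle is verifying positivity of the descent coefficients. For $C_{\bB}$, the argument is essentially identical to the one used for DualHash-StoM in \Cref{lem:lyapunov-coef-positivity}: unrolling $K_1, K_2, K_3$ gives a quadratic inequality in $t=\tau L_F$ that is satisfied whenever $\tau \leq \sqrt{\tilde{\delta}}/L_F$ with $\tilde{\delta}=c_\delta \delta^2$ for an appropriate $\delta$ (e.g.\ $\delta = 1/6$), so $C_{\bB}>0$ follows directly and $C_1, C_2>0$ as well. For $C_{\bx}^k = \eta_k^{-1}-\tilde{L}_F$, positivity is controlled by the step size: since $\eta_k = \eta/(\tilde{L}_F T^{1/3})$ from \eqref{Eq: the parameters conditions in dualhash-storm}, we have $\eta_k^{-1} = \tilde{L}_F T^{1/3}/\eta \geq \tilde{L}_F$ for $\eta \leq T^{1/3}$, which is routinely satisfied. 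I would simply state these parameter restrictions as preconditions, mirroring \Cref{lem:lyapunov-coef-positivity}.

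Finally, the two inequalities in \eqref{Eq: bound B and x in storm} follow immediately from \eqref{Eq: the descent property of Lyapunov func in storm} by discarding one of the two non-negative terms on the left-hand side (either $-C_{\bx}^k \Delta_{\bx}^{k+1}$ or $-C_{\bB}(\Delta_{\bB}^{k+1}+\Delta_{\bB}^k)$) and rearranging to isolate the remaining term. No expectation is taken at this stage because, in contrast to Lemma~\ref{Lemma: approximate sufficient descent property in SGDM}, the STORM analysis needs this pathwise bound on $\|\be^k\|^2$ to feed into the subsequent variance-reduction recursion that controls $\mathbb{E}[\|\be^k\|^2]$ via the STORM estimator dynamics.
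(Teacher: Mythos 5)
Your proposal is correct and follows essentially the same route as the paper: substitute the one-iteration progress bounds for the $(\bB,\bLmbd)$-block and the STORM $\bx$-step into \eqref{Eq: One-iteration progress decomposition}, match coefficients against \eqref{Eq: the coefficients of the Lyapunov function in SGDM} so the $\Delta^{k+2}$ terms cancel and the remaining coefficients collapse to $-C_{\bB}$ and $-C_{\bx}^k$, invoke \Cref{lem:lyapunov-coef-positivity} for $C_{\bB}>0$, and obtain \eqref{Eq: bound B and x in storm} by dropping a nonnegative term. Your explicit coefficient bookkeeping and the remark on why $\Psi_{storm}$ needs no $C_4\Delta_{\bx}^k$ term are, if anything, cleaner than the paper's write-up.
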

\begin{proof}
    Substituting the one-iteration progress \eqref{Eq: the recursive relation of L on x}
    and \eqref{Eq: the recursive relation of L on B and Lambda} into \eqref{Eq: One-iteration progress decomposition},
    we obtain
    \begin{align}
        \label{Eq: the recursive relation of L under VR} & \calL (\bx^{k+1}, \bB^{k+1}, \bLmbd^{k+1}) - \calL (\bx^{k}, \bB^{k}, \bLmbd^{k}) \notag                                                                                                        \\
        \leq \,                                          & \frac{s}{2}\| \be^{k}\|^{2}- (-\frac{1}{s}+ 2\eta_{k}^{-1}- L_{F}- K_{3}) \Delta_{\bx}^{k+1}+ K_{1}\Delta_{\bB}^{k+2}+ K_{4}\Delta_{\bx}^{k+2}- K_{2}\Delta_{\bB}^{k+1}+ K_{3}\Delta_{\bB}^{k},
    \end{align}
    where setting $s = \eta_{k}$ yields
    \begin{align}
             & \calL (\bx^{k+1}, \bB^{k+1}, \bLmbd^{k+1}) - \calL (\bx^{k}, \bB^{k}, \bLmbd^{k}) \nonumber                                                                                                                                          \\
        \leq & \,\frac{\eta_{k}}{2}\| \be^{k}\|^{2}- (\eta_{k}^{-1}- L_{F}- K_{3}) \Delta_{\bx}^{k+1}+ K_{1}\Delta_{\bB}^{k+2}+ K_{4}\Delta_{\bx}^{k+2}- K_{2}\Delta_{\bB}^{k+1}+ K_{3}\Delta_{\bB}^{k}.\label{Eq: One-iteration progress in storm}
    \end{align}
    Applying the definition of $\Psi_{storm}$ from \eqref{Eq: the Lyapunov function in STORM}
    into \eqref{Eq: One-iteration progress in storm}, we rearrange the above
    inequality
    \begin{align}
        \label{the descent property of Lyapunov func in STORM first} & \Psi_{storm}(\mathcal{Q}^{k+1}) - \Psi_{storm}(\mathcal{Q}^{k}) \notag                                                                                         \\
        \leq                                                         & - (C_{1}- K_{1}) \Delta_{\bm{B}}^{k+2}-(K_{2}- C_{1}- C_{2}) \Delta_{\bm{B}}^{k+1}- (C_{2}- K_{3}) \Delta_{\bm{B}}^{k}- (C_{3}-K_{4}) \Delta_{\bx}^{k+2}\notag \\
                                                                     & -(\eta_{k}^{-1}- L_{F}- K_{3}- C_{3}) \Delta_{\bx}^{k+1}- (C_{4}- K_{6}) \Delta_{\bx}^{k}+ \frac{\eta_{k}}{2}\| \be^{k}\|^{2}.
    \end{align}
    By substituting the Lyapunov function coefficients \eqref{Eq: the coefficients of the Lyapunov function in SGDM}
    into \eqref{the descent property of Lyapunov func in STORM first}, we obtain
    \eqref{Eq: the descent property of Lyapunov func in storm}. With $\tau$ satisfying
    \eqref{Eq: the parameters conditions of dualhash-sgdm in suppl}, \Cref{lem:lyapunov-coef-positivity}
    guarantees that $C_{\bB}> 0$. \eproof
\end{proof}
Then we provide a recursive bound on the gradient error vector $\{\be^{k}\}$
with analysis comparable to those in \cite[Lemma 2]{xu2023momentum} and
\cite[Lemma 8]{shi2025momentum}.
\begin{lemma}[Recursive bound on gradient error]
    \label{Lemma: Recursive bound on gradient error} Under \Cref{Ass: main assumptions},
    for $\forall \, k \geq 1$, it holds that
    \begin{align}
        \label{Eq: recursive bound on gradient error}\EE_{\bxi^{[k+1]}}[\| \be^{k+1}\|^{2}] \leq \left(1 - \rho_{k}\right)^{2}\EE_{\bxi^{[k]}}[\| \be^{k}\|^{2}] + \frac{2 \rho_{k}^{2}\sigma^{2}}{|\mathcal{J}_{k+1}|}+ \frac{4(1 - \rho_{k})^{2}L_{F}^{2}}{|\mathcal{J}_{k+1}|}\EE_{\bxi^{[k+1]}}\left[\Delta_{\bx}^{k+1}\right].
    \end{align}
\end{lemma}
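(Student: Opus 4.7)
The plan is to expand the STORM recursion \eqref{Eq: the stochastic approximate in STORM} into a one-step relation $\be^{k+1} = (1-\rho_k)\be^k + V_k$, where $V_k$ gathers all randomness freshly introduced by the batch $\mathcal{J}_{k+1}$, and then to bound the second moment of $V_k$. Writing $\bG^k := \nabla_{\bx}F(\bx^k,\bB^k)$ and $\bg^k_{\mathcal{J}} := \nabla_{\bx}F(\bx^k,\bB^k;\mathcal{J})$, I would add and subtract $(1-\rho_k)\bG^k$ inside $\bD^{k+1} - \bG^{k+1}$ to expose the contraction and write
\begin{align*}
\be^{k+1} = (1-\rho_k)\be^k + \rho_k(\bg^{k+1}_{\mathcal{J}_{k+1}} - \bG^{k+1}) + (1-\rho_k)\bigl[(\bg^{k+1}_{\mathcal{J}_{k+1}} - \bg^k_{\mathcal{J}_{k+1}}) - (\bG^{k+1}-\bG^k)\bigr],
\end{align*}
with the last two terms defining $V_k$.

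First, conditional on $\bxi^{[k]}$, the randomness in $V_k$ comes only from $\mathcal{J}_{k+1}$, and because the same batch is used for both gradient samples in the correction term, \Cref{Ass: main assumptions}(iii) gives $\EE[V_k\mid\bxi^{[k]}]=\bm{0}$. Since $\be^k$ is $\bxi^{[k]}$-measurable, the cross term in the expansion of $\|\be^{k+1}\|^2$ drops out under conditional expectation, producing
\begin{align*}
\EE_{\bxi^{[k+1]}}[\|\be^{k+1}\|^2] = (1-\rho_k)^2\,\EE_{\bxi^{[k]}}[\|\be^k\|^2] + \EE_{\bxi^{[k+1]}}[\|V_k\|^2].
\end{align*}

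Next, I would split $\|V_k\|^2$ via $\|a+b\|^2 \leq 2\|a\|^2 + 2\|b\|^2$. The first piece, $2\rho_k^2\|\bg^{k+1}_{\mathcal{J}_{k+1}} - \bG^{k+1}\|^2$, has conditional expectation at most $2\rho_k^2\sigma^2/|\mathcal{J}_{k+1}|$ by the i.i.d.\ minibatch variance identity following from \Cref{Ass: main assumptions}(iii). The second piece is the squared norm of a sample mean over $\mathcal{J}_{k+1}$ of conditionally zero-mean summands of the form $[\nabla F_i(\bx^{k+1},\bB^{k+1})-\nabla F_i(\bx^k,\bB^k)] - [\bG^{k+1}-\bG^k]$, so averaging supplies an additional $1/|\mathcal{J}_{k+1}|$ factor and invoking the mean-squared smoothness \Cref{Ass: the mean squared smoothness of F} bounds each summand (in the projected $\bx$-block of interest) by $L_F^2\|\bx^{k+1}-\bx^k\|^2$, with the $\bB$-contribution being controlled via the deterministic $\bB$-update \eqref{Eq: the update of B} that ties $\bB^{k+1}-\bB^k$ to the already accounted $\bx$-increment. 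Converting $\|\bx^{k+1}-\bx^k\|^2 = 2\Delta_{\bx}^{k+1}$ then yields the claimed coefficient $4(1-\rho_k)^2 L_F^2/|\mathcal{J}_{k+1}|$.

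The main technical obstacle is justifying cleanly the zero-mean property of $V_k$ and the $1/|\mathcal{J}_{k+1}|$ variance reduction on the gradient-difference term. Both hinge on conditioning on $\bxi^{[k]}$ before taking second moments, so that within each conditional expectation the samples indexed by $\mathcal{J}_{k+1}$ are i.i.d.\ and the sample-mean variance identity applies; and on the fact that the same batch $\mathcal{J}_{k+1}$ is reused in the STORM correction, without which the cancellation producing the contraction factor $(1-\rho_k)^2$ and the variance-reduced coefficient on $\Delta_{\bx}^{k+1}$ would both fail.
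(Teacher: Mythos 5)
Your overall skeleton is the right one, and it is worth noting that the paper itself does not prove this lemma — it only points to \cite[Lemma 2]{xu2023momentum} and \cite[Lemma 8]{shi2025momentum} — so the standard STORM recursion you set up (one-step relation $\be^{k+1}=(1-\rho_k)\be^k+V_k$ with $V_k$ conditionally mean-zero given $\bxi^{[k]}$, orthogonality killing the cross term, Young's inequality splitting $\|V_k\|^2$, and the i.i.d.\ minibatch variance identity supplying the $1/|\mathcal{J}_{k+1}|$ factors) is exactly the intended argument. Your algebraic decomposition of $\be^{k+1}$ is correct, the measurability of $\bx^{k+1},\bB^{k+1}$ with respect to $\bxi^{[k]}$ does hold because of the update order in \Cref{alg:our_algorithm_storm}, and you are right that the same batch $\mathcal{J}_{k+1}$ must be reused at both points for the correction term to be a mean-zero sample average (the main-text display \eqref{Eq: the stochastic approximate in STORM} writes $\mathcal{J}_{k-1}$ for the old point, but taken literally that collapses $\bD^k$ to plain minibatch SGD, so the same-batch reading — which matches the paper's own STORM description in the preliminaries — is the only sensible one).

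The genuine gap is in your treatment of the $\bB$-block. Applying \Cref{Ass: the mean squared smoothness of F} to the summands $\nabla_{\bx}F_j(\bx^{k+1},\bb_j^{k+1})-\nabla_{\bx}F_j(\bx^{k},\bb_j^{k})$ unavoidably produces $L_F^2\bigl[\|\bx^{k+1}-\bx^k\|^2+\|\bB^{k+1}-\bB^k\|^2\bigr]$, i.e.\ your argument honestly yields an additional term $\frac{4(1-\rho_k)^2L_F^2}{|\mathcal{J}_{k+1}|}\Delta_{\bB}^{k+1}$ on the right-hand side of \eqref{Eq: recursive bound on gradient error}, which the stated lemma omits. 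Your proposed fix — that the $\bB$-update \eqref{Eq: the update of B} "ties $\bB^{k+1}-\bB^k$ to the already accounted $\bx$-increment" — is not correct: that update is a gradient step $\bB^{k+1}=\bB^k-\tau(\nabla_{\bB}F(\bx^{k+1},\bB^k)+\bLmbd^k)$, whose increment is governed by $\nabla_{\bB}F+\bLmbd$ and is generically nonzero even when $\bx^{k+1}=\bx^k$; there is no bound of the form $\|\bB^{k+1}-\bB^k\|\lesssim\|\bx^{k+1}-\bx^k\|$ available. So as written your proof establishes a slightly different inequality than the one claimed. The extra $\Delta_{\bB}^{k+1}$ term is harmless downstream (the Lyapunov analysis already carries $\Delta_{\bB}^{k+1}$ with negative coefficient, so it can be absorbed at the cost of adjusting constants), but you should either state and propagate the corrected bound or give a valid reason the $\bB$-contribution vanishes; the reason you give does not hold.
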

\begin{remark}
    This recursive bound reveals three key components affecting gradient error:
    (i) historical error decay (controlled by $1-\rho_{k}$), (ii) variance from current
    stochastic gradient estimation (proportional to $\rho_{k}^{2}$), and (iii)
    error propagation from recent updates (scaling with $(1-\rho_{k})^{2}\Delta_{\bx}
    ^{k+1}$). The momentum parameter $\rho_{k}$ creates a trade-off between these
    components.
\end{remark}

Now we establish a weighted error accumulation bound
$\frac{1}{T}\sum_{k=1}^{T}\rho_{k}\EE_{\bxi^{[k]}}\left[ \| \be^{k}\|^{2}\right ]$
for the variance reduction estimate. For this analysis, we require the
parameters $\eta_{k}$, $\rho_{k}$, $\tau$ in \Cref{alg:our_algorithm_storm} to
satisfy:
\begin{align}
    \label{Eq: the parameters assumptions in storm in suppl}\eta_{k+1}\leq \eta_{k}, \quad \eta_{k}\tilde{L}_{F}\leq \frac{1}{2}, \quad 0 < 8\eta_{1}\eta_{k}L_{F}^{2}\leq \rho_{k}\leq1, \quad \tau \leq \frac{\sqrt{\tilde{\delta}}}{L_{F}},
\end{align}where $\tilde{\delta}= c_{\delta}\delta^{2}$ is a given constant
independent of $T$ with some $\delta$, $c_{\delta}> 0$. We denote $\tilde{\Delta}_{1}= \Psi_{storm}^{1}- \Psi_{storm}^{*}$,
$\Psi_{storm}^{1}= \Psi_{storm}(\calQ^{1})$, $\Psi_{storm}^{*}$ is the lower bound
of $\Psi_{storm}$ , and $|\calJ_{1}| = b_{1}$, $| \calJ_{k}| = b$ for $k \geq 2$.
\noindent
\begin{lemma}[Weighted error accumulation bound]
    \label{Lemma: Weighted error accumulation} Under \Cref{Ass: main assumptions}
    and parameter conditions \eqref{Eq: the parameters assumptions in storm in suppl}, for $\forall$ $T \geq 1$, the following weighted cumulative error bound
    holds:
    \begin{align}
        \label{Eq: weighted cumulative error bound}\frac{1}{T}\sum_{k=1}^{T}\rho_{k}\EE_{\bxi^{[k]}}\left[ \| \be^{k}\|^{2}\right]\leq \frac{16 \eta_{1}L_{F}^{2}\tilde{\Delta}_{1}}{T}+ \frac{2\sigma^{2}}{Tb_{1}}+ \frac{1}{T}\sum_{k=2}^{T}\frac{4\rho_{k}^{2}\sigma^{2}}{b}.
    \end{align}
\end{lemma}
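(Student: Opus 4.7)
The plan is to telescope the one-step recursion in \Cref{Lemma: Recursive bound on gradient error} and close the resulting coupling with the primal iterates via the Lyapunov descent inequality \eqref{Eq: bound B and x in storm}; the step-size conditions in \eqref{Eq: the parameters assumptions in storm in suppl} are precisely what allow this closure.

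Taking total expectations throughout, I would first exploit $\rho_k\in(0,1]$ to relax $(1-\rho_k)^2 \leq 1-\rho_k$ in \eqref{Eq: recursive bound on gradient error}, rearrange, and obtain the telescopic form
\[
\rho_k\,\EE[\|\be^k\|^2] \leq \EE[\|\be^k\|^2] - \EE[\|\be^{k+1}\|^2] + \frac{2\rho_k^2\sigma^2}{b} + \frac{4L_F^2}{b}\,\EE[\Delta_{\bx}^{k+1}].
\]
Summing over $k=1,\ldots,T$ collapses the first two terms into $\EE[\|\be^1\|^2] - \EE[\|\be^{T+1}\|^2]$, which is bounded above by $\sigma^2/b_1$ using the initial-batch estimate \eqref{Eq: the initial stochastic error in STORM} together with the nonnegativity of $\|\be^{T+1}\|^2$. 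The remaining task is to control $\sum_k \EE[\Delta_{\bx}^{k+1}]$.

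Next, I would invoke the second inequality in \eqref{Eq: bound B and x in storm}. Because $\eta_k \tilde{L}_F \leq 1/2$ from \eqref{Eq: the parameters assumptions in storm in suppl}, the coefficient satisfies $C_{\bx}^k = \eta_k^{-1} - \tilde{L}_F \geq \eta_k^{-1}/2$, so
\[
\EE[\Delta_{\bx}^{k+1}] \leq 2\eta_k\bigl(\EE[\Psi_{storm}^k] - \EE[\Psi_{storm}^{k+1}]\bigr) + \eta_k^2\,\EE[\|\be^k\|^2].
\]
An Abel-summation argument, exploiting the monotonicity $\eta_{k+1}\leq \eta_k$ together with $\EE[\Psi_{storm}^k]\geq \Psi_{storm}^{*}$, then collapses $\sum_{k=1}^T \eta_k(\EE[\Psi_{storm}^k]-\EE[\Psi_{storm}^{k+1}])$ to at most $\eta_1(\EE[\Psi_{storm}^1]-\Psi_{storm}^{*}) = \eta_1\tilde{\Delta}_1$.

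The main obstacle is the feedback term $\eta_k^2\,\EE[\|\be^k\|^2]$ that emerges after substituting the bound for $\Delta_{\bx}^{k+1}$, since it reintroduces the very quantity we are trying to estimate and threatens to make the chain circular. This is exactly where the condition $\rho_k \geq 8\eta_1\eta_k L_F^2$ in \eqref{Eq: the parameters assumptions in storm in suppl} becomes essential: combined with $\eta_k \leq \eta_1$ it yields $4L_F^2 \eta_k^2 \leq \rho_k/2$, and since $b \geq 1$, the aggregated feedback $\sum_k \frac{4L_F^2\eta_k^2}{b}\EE[\|\be^k\|^2]$ can be absorbed into $\tfrac12 \sum_k \rho_k \EE[\|\be^k\|^2]$ on the left-hand side. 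Doubling the remainder and dividing by $T$ then produces the three advertised terms in \eqref{Eq: weighted cumulative error bound}: the telescoped potential contribution $16\eta_1 L_F^2\tilde{\Delta}_1/T$, the initial-batch variance $2\sigma^2/(Tb_1)$, and the accumulated per-iteration mini-batch noise $\sum_k 4\rho_k^2\sigma^2/(Tb)$.
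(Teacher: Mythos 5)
Your proof is correct and rests on the same two pillars as the paper's: the recursive error bound of \Cref{Lemma: Recursive bound on gradient error} and the bound on $\Delta_{\bx}^{k+1}$ extracted from \eqref{Eq: bound B and x in storm} via $C_{\bx}^{k}\geq \tfrac{1}{2}\eta_k^{-1}$, closed by the condition $\rho_k\geq 8\eta_1\eta_kL_F^2$. The organization is reversed, though: the paper substitutes the recursion's bound on $-\EE[\Delta_{\bx}^{k+1}]$ into the one-step Lyapunov descent, forming the augmented potential $\Psi_{storm}(\calQ^k)+\tfrac{1}{8\eta_1L_F^2}\|\be^k\|^2$ whose per-iteration descent is then telescoped once; you instead telescope the error recursion and the potential separately and absorb the aggregated feedback $\sum_k \tfrac{4L_F^2\eta_k^2}{b}\EE[\|\be^k\|^2]$ into half of the left-hand side. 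The two closures are algebraically equivalent, but your Abel-summation step genuinely covers the nonincreasing case $\eta_{k+1}\leq\eta_k$ allowed by \eqref{Eq: the parameters assumptions in storm in suppl}, whereas the paper's telescoping of the augmented potential with the constant weight $\tfrac{1}{8\eta_1 L_F^2}$ sidesteps this; and retaining the $1/|\calJ_{k+1}|$ factor on the $\Delta_{\bx}^{k+1}$ term gives you the slightly tighter $\tfrac{16\eta_1L_F^2\tilde{\Delta}_1}{bT}$ in place of $\tfrac{16\eta_1L_F^2\tilde{\Delta}_1}{T}$. The only discrepancy with the stated inequality is bookkeeping: your variance sum runs over $k=1,\dots,T$ rather than $k=2,\dots,T$, so you carry an extra $\tfrac{4\rho_1^2\sigma^2}{bT}$; this is asymptotically irrelevant (it is $\calO(T^{-7/3})$ under \eqref{Eq: the parameters conditions in dualhash-storm}), and the paper's own summation of \eqref{Eq: One-iteration Expected descent} from $k=1$ in fact produces the same full-range sum, so nothing downstream is affected.
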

\begin{proof}
    Under \eqref{Eq: the parameters assumptions in storm in suppl}, we have a lower
    bound for $C_{\bx}^{k}$ with
    \begin{align}
        \label{Eq: the bound coefficient of x}C_{\bx}^{k}\geq \frac{1}{2\eta_{1}}>0 .
    \end{align}
    Substituting this inequality into the second inequality of \eqref{Eq: bound B and x in storm}
    and taking the conditional expectation on both sides yields
    \begin{align}
        \label{Eq: scaling recursive}\EE_{\bxi^{[k+1]}}\left[\Psi_{storm}(\calQ^{k+1})\right] - \EE_{\bxi^{[k]}}\left[\Psi_{storm}(\calQ^{k})\right] \leq - \frac{1}{2\eta_{1}}\EE_{\bxi^{[k+1]}}\left[ \Delta_{\bx}^{k+1}\right] + \frac{\eta_{k}}{2}\EE_{\bxi^{[k]}}\left[ \|\be^{k}\|^{2}\right].
    \end{align}
    From \Cref{Lemma: Recursive bound on gradient error}, we obtain
    \begin{align}
        \EE_{\bxi^{[k+1]}}\left[\| \be^{k+1}\|^{2}\right] \leq \left(1 - \rho_{k}\right)\EE_{\bxi^{[k]}}\left[\| \be^{k}\|^{2}\right] + \frac{2 \rho_{k}^{2}\sigma^{2}}{|\mathcal{J}_{k+1}|}+ 4(1-\rho_{k})^{2}L_{F}^{2} \EE_{\bxi^{[k+1]}}\left[ \Delta_{\bx}^{k+1} \right], \nonumber
    \end{align}
    where the inequality follows from $0 \leq \rho_{k}\leq 1$ and  $| \calJ_{k+1}|
    \geq 1$, and this implies
    \begin{align}
        - \EE_{\bxi^{[k+1]}}\left[ \Delta_{\bx}^{k+1} \right]\leq \frac{1}{4(1-\rho_{k})^{2}L_{F}^{2}}\left(\EE_{\bxi^{[k]}}\left[\| \be^{k}\|^{2}\right] - \EE_{\bxi^{[k+1]}}\left[\| \be^{k+1}\|^{2}\right] -\rho_k \EE_{\bxi^{[k]}}\left[\| \be^{k}\|^{2}\right]+ \frac{2 \rho_{k}^{2}\sigma^{2}}{|\mathcal{J}_{k+1}|}\right). \nonumber
    \end{align}
    Substituting this inequality into \eqref{Eq: scaling recursive}, we obtain
    \begin{align}
             & \EE_{\bxi^{[k+1]}}\left[\Psi_{storm}(\calQ^{k+1})\right] - \EE_{\bxi^{[k]}}\left[\Psi_{storm}(\calQ^{k})\right] \notag     \\
             \leq & \frac{1}{8(1-\rho_k)\eta_1L^2_F}\left(\EE_{\bxi^{[k]}}\left[\| \be^{k}\|^{2}\right] - \EE_{\bxi^{[k+1]}}\left[\| \be^{k+1}\|^{2}\right] -\rho_k \EE_{\bxi^{[k]}}\left[\| \be^{k}\|^{2}\right]+ \frac{2 \rho_{k}^{2}\sigma^{2}}{|\mathcal{J}_{k+1}|}\right) + \frac{\eta_{k}}{2}\EE_{\bxi^{[k]}}\left[ \|\be^{k}\|^{2}\right]  \notag \\
        \leq & \frac{1}{8\eta_{1}L_{F}^{2}}\left(\EE_{\bxi^{[k]}}\left[\| \be^{k}\|^{2}\right] - \EE_{\bxi^{[k+1]}}\left[\| \be^{k+1}\|^{2}\right] + \frac{2 \rho_{k}^{2}\sigma^{2}}{|\mathcal{J}_{k+1}|}\right) - \left( \frac{\rho_{k}}{8\eta_{1}L_{F}^{2}}- \frac{\eta_{k}}{2}\right)\EE_{\bxi^{[k]}}\left[\| \be^{k}\|^{2}\right] + \frac{\rho_{k}^{2}\sigma^{2}}{4\eta_{1}L_{F}^{2}| \calJ_{k+1}|}, \label{Eq: scaling recursive 2}
    \end{align} where the second inequality follows from $ 1 - \rho_{k}\leq 1$.
    From \eqref{Eq: the parameters assumptions in storm in suppl}, it follows that
    $\frac{\rho_{k}}{8\eta_{1}L_{F}^{2}}- \frac{\eta_{k}}{2}\geq \frac{\rho_{k}}{16\eta_{1}L_{F}^{2}}$.
    Applying this inequality to \eqref{Eq: scaling recursive 2} yields the following
    expected descent property of the Lyapunov function $\Psi_{storm}$
    \begin{align}
        \label{Eq: One-iteration Expected descent} & \EE_{\bxi^{[k+1]}}\left[ \Psi_{storm}(\calQ^{k+1}) + \frac{1}{8\eta_{1}L_{F}^{2}}\| \be^{k+1}\|^{2}\right] - \EE_{\bxi^{[k]}}\left[ \Psi_{storm}(\calQ^{k}) + \frac{1}{8\eta_{1}L_{F}^{2}}\| \be^{k}\|^{2}\right] \notag \\
        \leq                                       & - \frac{\rho_{k}}{16\eta_{1}L_{F}^{2}}\EE_{\bxi^{[k]}}[\| \be^{k}\|^{2}] + \frac{\rho_{k}^{2}\sigma^{2}}{4\eta_{1}L_{F}^{2}| \calJ_{k+1}|}.
    \end{align}
    Finally, summing up \eqref{Eq: One-iteration Expected descent} from $k=1$ to
    $T$ and dividing by $T$ yields
    \[
        \frac{1}{T}\sumT \rho_{k}\EE_{\bxi^{[k]}}[\| \be^{k}\|^{2}]\leq \frac{16\eta_{1}L_{F}^{2}\tilde{\Delta}_{1}}{T}
        + \frac{2\EE_{\bxi^{[1]}}[\| \be^{1}\|^{2}]}{T}+ \frac{1}{T}\sum\limits_{k=2}
        ^{T}\frac{4 \rho_{k}^{2}\sigma^{2}}{|\calJ_{k+1}|},
    \]
    where substituting \eqref{Eq: the initial stochastic error in STORM} into
    the inequality above yields \eqref{Eq: weighted cumulative error bound} and completes
    the proof. \eproof
\end{proof}
Now, we also derive an upper stationarity error bound of the iterate generated
from \Cref{alg:our_algorithm_storm}.
\begin{lemma}[Stationarity error bound]
    \label{Lemma: Stationarity error bound} Under \Cref{Ass: main assumptions} (ii),
    for $\forall$ $k \geq 1$, it holds that
    \begin{align}
        \text{dist}^{2}(\bm{0}, \partial \calL(\bx^{k}, \bB^{k}, \bm{\Lambda}^{k})) \leq S_{1}\Delta_{\bB}^{k+1}+ S_{2}\Delta_{\bB}^{k}+ S_{3}^{k}\Delta_{\bm{x}}^{k+1}+ 2 \| \be^{k}\|^{2}, \label{Eq: the stationarity error bound in Lemma in storm}
    \end{align}
    where the coefficients are defined as \eqref{Eq: the coefficients of stationarity error bound in SGDM}.
\end{lemma}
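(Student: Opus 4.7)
The plan is to mirror the structure of the SGDM stationarity-error bound (Lemma~5 in the SGDM appendix), exploiting the fact that the $\bB$- and $\bLmbd$-updates in \Cref{alg:our_algorithm_storm} are identical to those in \Cref{alg:our_algorithm_sgdm}, so that only the $\bx$-component of the subgradient estimate needs to be re-derived. I would start from the decomposition
\[
\mathrm{dist}^{2}(\bm{0}, \partial \calL(\bx^{k}, \bB^{k}, \bLmbd^{k})) \;\leq\; \|\bd^{k}\|^{2} \;=\; \|\bd_{\bx}^{k}\|^{2}+\|\bd_{\bB}^{k}\|^{2}+\|\bd_{\bm{\Lambda}}^{k}\|^{2},
\]
with the components of $\bd^{k}$ as defined in \eqref{Eq: the components of the subgradient}.

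Next, I would reuse the bounds on $\|\bd_{\bB}^{k}\|^{2}$ and $\|\bd_{\bm{\Lambda}}^{k}\|^{2}$ established in \eqref{Eq: dB} and \eqref{Eq: dLambda}, since these are derived purely from the optimality conditions \eqref{Eq: the optimality condition of Bk} and \eqref{Eq: the optimality condition of lbda} together with \Cref{Lemma: dual boundedness} and the $L_F$-smoothness in \Cref{Ass: main assumptions}(ii). These contribute exactly $S_{1}\Delta_{\bB}^{k+1}+S_{2}\Delta_{\bB}^{k}+(4L_{F}^{2}+12(\tau L_{F})^{2})\Delta_{\bx}^{k+1}$ to the right-hand side, matching the $\Delta_{\bB}$-coefficients in \eqref{Eq: the coefficients of stationarity error bound in SGDM}.

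The only genuinely new piece is the $\|\bd_{\bx}^{k}\|^{2}$ estimate, which is in fact simpler than in the SGDM case because the STORM update \eqref{Eq: the update of x in STORM} contains no momentum extrapolation. Rewriting $\bD^{k}=\eta_{k}^{-1}(\bx^{k}-\bx^{k+1})$ and inserting $\pm \bD^{k}$ in the definition of $\bd_{\bx}^{k}=\nabla_{\bx}F(\bx^{k},\bB^{k})$, I would use the gradient-error definition \eqref{Eq: the gradient error in STORM} to get $\bd_{\bx}^{k}=\eta_{k}^{-1}(\bx^{k}-\bx^{k+1})-\be^{k}$, then apply $(a+b)^{2}\leq 2(a^{2}+b^{2})$ to obtain
\[
\|\bd_{\bx}^{k}\|^{2}\;\leq\; 4\eta_{k}^{-2}\,\Delta_{\bx}^{k+1} \;+\; 2\|\be^{k}\|^{2}.
\]
Adding this to the $\bB$- and $\bLmbd$-component bounds yields the claimed inequality \eqref{Eq: the stationarity error bound in Lemma in storm}, with the coefficient of $\Delta_{\bx}^{k+1}$ equal to $4\eta_{k}^{-2}+4L_{F}^{2}+12(\tau L_{F})^{2}=S_{3}^{k}$, and with no residual $\Delta_{\bx}^{k}$ term (which is why the $S_{4}^{k}$ term present in the SGDM version disappears here).

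There is no real obstacle: unlike the SGDM version, there is no auxiliary $\by^{k},\bz^{k}$ to book-keep, so \Cref{Pp: relations of xyz} is not needed and no splitting into $\nabla_{\bx}F(\bx^{k},\bB^{k})-\nabla_{\bx}F(\bz^{k},\bB^{k})$ is required. The only subtlety to watch is that the bound is pathwise (no expectation is taken), which is consistent with the statement; the expectation over $\be^{k}$ will be handled later via \Cref{Lemma: Recursive bound on gradient error} and \Cref{Lemma: Weighted error accumulation} when this inequality is combined with the Lyapunov descent in \Cref{Lemma: approximate sufficient descent property} to prove \Cref{Theorem: the oracle complexity in STORM}.
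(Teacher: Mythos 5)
Your proposal is correct and matches the paper's own proof essentially verbatim: the paper likewise reuses the bounds \eqref{Eq: dB} and \eqref{Eq: dLambda} for the $\bB$- and $\bLmbd$-components and only re-derives $\|\bd_{\bx}^{k}\|^{2}\leq 4\eta_{k}^{-2}\Delta_{\bx}^{k+1}+2\|\be^{k}\|^{2}$ from $\nabla_{\bx}F(\bx^{k},\bB^{k})=\eta_{k}^{-1}(\bx^{k}-\bx^{k+1})-\be^{k}$ via $(a+b)^{2}\leq 2(a^{2}+b^{2})$. Your observations about the vanishing $S_{4}^{k}$ term and the pathwise (expectation-free) nature of the bound are also consistent with the paper.
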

\begin{proof}
    This proof follows a similar approach to that of \Cref{Lemma: Stationarity error bound in SGDM} 
in \Cref{Appendix: the complexity analysis of DualHash-StoRM}. The key difference lies in 
the estimation of $\| \bd_{\bx}^{k}\|^{2}$. To estimate $\| \bd_{\bx}^{k}\|^{2}$, we use 
the optimality condition of $\bx^{k+1}$ from \eqref{Eq: the update of x in STORM} to obtain
    \begin{align}
        \|\bd_{\bx}^{k}\|^{2}= & \,\eta_{k}^{-2}\left\| \eta_{k}\nabla_{\bx}F(\bx^{k}, \bm{B}^{k}) + 0\right\|^{2}\notag                                       \\
        =                      & \, \eta_{k}^{-2}\left\|\eta_{k}\left( \nabla_{\bx}F(\bx^{k}, \bm{B}^{k}) - \bD^{k}\right) + \eta_{k}\bD^{k}\right\|^{2}\notag \\
        =                      & \, \eta_{k}^{-2}\left\|\eta_{k}\be^{k}+ (\bx^{k}- \bx^{k+1})\right\|^{2}\notag                                                \\
        \leq                   & \, 4\eta_{k}^{-2}\Delta_{\bx}^{k+1}+ 2 \| \be^{k}\|^{2}.\label{Eq: dx in storm}
    \end{align}
    Then, substituting \eqref{Eq: dx in storm}, \eqref{Eq: dB}, and \eqref{Eq: dLambda}
    into \eqref{Eq: the stationarity error bound in proof} yields \eqref{Eq: the stationarity error bound in Lemma in storm}
    and completes the proof. \eproof
\end{proof}
For clarity in subsequent analysis, we denote $S_{4}= \max\{S_{1}, S_{2}\}$ and $C_{\delta}= 1 + \frac{3\tilde{\delta}}{\tilde{L}_{F}^{2}}$.
\begin{lemma}
    \label{lemma: the averaged stationarity error bound} Under the conditions of
    \Cref{Lemma: Weighted error accumulation} , let us set parameters $\eta_{k}\equiv
    \eta_{1}$, $\rho_{k}\equiv \rho_{1}$, $k \geq 1$; then, it holds that for $\forall$
    $T \geq 1$,
    \begin{align}
    \frac{1}{T}\sumT \EE [\text{dist}^{2}(\bm{0}, \partial \calL(\bx^{k}, \bB^{k}, \bm{\Lambda}^{k}))]        
        \leq & \, \frac{\tilde{\Delta}_{1}}{T}\left( \frac{S_{4}}{C_{\bB}}+ \frac{2(C_{\delta}+ 4)}{\eta_{1}}+ \frac{(6+C_{\delta}+ \frac{S_4\eta_1}{2C_{\bB}})16\eta_{1}L_{F}^{2}}{\rho_{1}}\right) \notag \\
        &+ \left( 6 + C_{\delta}+ \frac{S_{4}\eta_{1}}{2C_{\bB}}\right)\left(\frac{2\sigma^{2}}{b_{1}\rho_{1}T}+ \frac{4\rho_{1}\sigma^{2}(T-1)}{bT}\right) \label{Eq: the averaged stationarity error bound in lemma}
    \end{align}
\end{lemma}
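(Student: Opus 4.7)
The plan is to start from the per-iteration stationarity error bound \eqref{Eq: the stationarity error bound in Lemma in storm}, take expectations, sum from $k=1$ to $T$ and divide by $T$, and then control each of the three resulting sums (one involving $\Delta_{\bB}$, one involving $\Delta_{\bx}$, and one involving $\|\be^k\|^2$) using the Lyapunov descent inequalities \eqref{Eq: bound B and x in storm} and the weighted gradient-error bound \eqref{Eq: weighted cumulative error bound}.

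For the $\Delta_{\bB}$ contribution, I would first use $S_1,S_2\leq S_4=\max\{S_1,S_2\}$ so that $S_1\Delta_{\bB}^{k+1}+S_2\Delta_{\bB}^k\leq S_4(\Delta_{\bB}^{k+1}+\Delta_{\bB}^k)$. Applying the first inequality of \eqref{Eq: bound B and x in storm}, summing over $k$, using the telescoping of $\Psi_{storm}(\calQ^k)-\Psi_{storm}(\calQ^{k+1})$ down to $\tilde\Delta_1$, and noting $\eta_k\equiv\eta_1$ yields
\begin{align*}
\sum_{k=1}^{T}S_4\,\EE[\Delta_{\bB}^{k+1}+\Delta_{\bB}^{k}]\;\leq\;\frac{S_4\tilde\Delta_1}{C_{\bB}}+\frac{S_4\eta_1}{2C_{\bB}}\sum_{k=1}^{T}\EE[\|\be^k\|^2].
\end{align*}

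The main obstacle is controlling $S_3^k$ in the $\Delta_{\bx}^{k+1}$ term, since $S_3^k=4(\eta_k^{-2}+L_F^2+3(\tau L_F)^2)$ mixes the primal stepsize with the penalty parameter $\tau$. Under \eqref{Eq: the parameters assumptions in storm in suppl}, we have $\tau L_F\leq\sqrt{\tilde\delta}$ and $\eta_k\tilde L_F\leq 1/2$, and using $L_F\leq\tilde L_F$ gives $L_F^2+3(\tau L_F)^2\leq\tilde L_F^2+3\tilde\delta=C_\delta\tilde L_F^2\leq C_\delta/(4\eta_1^2)$. Combined with $\eta_k^{-2}=\eta_1^{-2}$, this yields the clean bound $S_3^k\leq(4+C_\delta)/\eta_1^2$. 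Using the second inequality of \eqref{Eq: bound B and x in storm} together with the lower bound $C_{\bx}^k\geq 1/(2\eta_1)$ from \eqref{Eq: the bound coefficient of x} gives $\Delta_{\bx}^{k+1}\leq 2\eta_1(\Psi_{storm}(\calQ^k)-\Psi_{storm}(\calQ^{k+1}))+\eta_1^2\|\be^k\|^2$, so that after telescoping
\begin{align*}
\sum_{k=1}^{T}S_3^k\,\EE[\Delta_{\bx}^{k+1}]\;\leq\;\frac{2(4+C_\delta)}{\eta_1}\tilde\Delta_1+(4+C_\delta)\sum_{k=1}^{T}\EE[\|\be^k\|^2].
\end{align*}

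Finally, I would apply \Cref{Lemma: Weighted error accumulation} with the constant choice $\rho_k\equiv\rho_1$ and $|\calJ_k|=b$ for $k\geq 2$, divide by $\rho_1$, to obtain $\sum_{k=1}^{T}\EE[\|\be^k\|^2]\leq\frac{16\eta_1 L_F^2\tilde\Delta_1}{\rho_1}+\frac{2\sigma^2}{b_1\rho_1}+\frac{4\rho_1\sigma^2(T-1)}{b}$. Assembling the three sums, the direct $2\sum_k\EE[\|\be^k\|^2]$ term from \eqref{Eq: the stationarity error bound in Lemma in storm} combines with the $(4+C_\delta)$ and $\frac{S_4\eta_1}{2C_{\bB}}$ coefficients into the single factor $6+C_\delta+\frac{S_4\eta_1}{2C_{\bB}}$, while the $\tilde\Delta_1$ contributions group into $\frac{S_4}{C_{\bB}}+\frac{2(C_\delta+4)}{\eta_1}$ plus the product with $\frac{16\eta_1 L_F^2}{\rho_1}$; dividing the whole by $T$ delivers exactly the claim \eqref{Eq: the averaged stationarity error bound in lemma}. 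No step requires new estimates beyond those already proved; the only delicate point is the uniform bound on $S_3^k$ which is what forces the constant $C_\delta$ to appear.
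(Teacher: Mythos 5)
Your proposal is correct and follows essentially the same route as the paper: bound the $\Delta_{\bB}$ and $\Delta_{\bx}$ sums via the two inequalities in \eqref{Eq: bound B and x in storm}, telescope, and absorb all gradient-error terms through \Cref{Lemma: Weighted error accumulation} with $\rho_k\equiv\rho_1$. The only (cosmetic) difference is in handling $S_3^k\Delta_{\bx}^{k+1}$ — you bound $S_3^k\leq(4+C_\delta)/\eta_1^2$ and $1/C_{\bx}^k\leq 2\eta_1$ separately, whereas the paper bounds the ratio $S_3^k/C_{\bx}^k$ via $(1+cx^2)/(1-x)\leq 2+c/2$ — but both yield the identical constant $\frac{2(C_\delta+4)}{\eta_1}$, and your version in fact matches the lemma statement more cleanly than the paper's intermediate display \eqref{Eq: the averaged stationarity error bound in proof 2}, which contains a typographical $S_4$ in the denominator.
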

\begin{proof}
    Applying $\eta_{k}\leq \eta_{1}$ from \eqref{Eq: the parameters assumptions in storm in suppl}
    to \eqref{Eq: the stationarity error bound in Lemma in storm} in \Cref{Lemma:
    Stationarity error bound} yields
    \begin{align}
        \label{Eq: the stationarity error bound in proof storm}\text{dist}^{2}(\bm{0}, \partial \calL(\bx^{k}, \bB^{k}, \bm{\Lambda}^{k})) \leq & \,\frac{S_{4}}{C_{\bB}}\left( \Psi_{storm}(\calQ^{k}) - \Psi_{storm}(\calQ^{k+1}) \right) + S_{3}^{k}\Delta_{\bm{x}}^{k+1}+(2 + \frac{S_{4}\eta_{1}}{2C_{\bB}})\| \be^{k}\|^{2}.
    \end{align}
    For the second term in \eqref{Eq: the stationarity error bound in proof storm},
    by recalling the second inequality in \eqref{Eq: bound B and x in storm}, we
    obtain
    \[
        \Delta_{\bm{x}}^{k+1}\leq \frac{1}{ C_{\bx}^{k}}\left( \Psi_{storm}(\mathcal{Q}
        ^{k}) - \Psi_{storm}(\mathcal{Q}^{k+1}) +\frac{\eta_{k}}{2}\| \be^{k}\|^{2}
        \right),
    \] where we use the positive value of $C_{\bx}^{k}$ from \eqref{Eq: the bound coefficient of x}. Therefore, substituting the inequality above and the definition of
    $S_{3}^{k}$ from \eqref{Eq: the coefficients of stationarity error bound in SGDM},
    into $S_{3}^{k}\Delta_{\bx}^{k+1}$, one has
\begin{align}
\label{Eq: the scaling for x sequence}
&S_{3}^{k}\Delta_{\bx}^{k+1} \notag \\
\leq &\,\frac{S_{3}^{k}}{C_{\bx}^{k}}\left( \Psi_{storm}(\mathcal{Q}^{k}) - \Psi_{storm}(\mathcal{Q}^{k+1}) +\frac{\eta_{k}}{2}\| \be^{k}\|^{2}\right)\notag    \\
\leq &\, \frac{4 [ 1 + (1 + 3\tau^{2})(\eta_{k}L_{F})^{2}]}{\eta_{k}(1 - \eta_{k}\tilde{L}_{F})}\left( \Psi_{storm}(\mathcal{Q}^{k}) - \Psi_{storm}(\mathcal{Q}^{k+1}) +\frac{\eta_{k}}{2}\| \be^{k}\|^{2}\right) \notag                                                                                                                                    \\
\leq &\, \frac{4 [ 1 + (1 + \frac{3\tilde{\delta}}{L^2_F})(\eta_{k}L_{F})^{2}]}{\eta_{k}(1 - \eta_{k}\tilde{L}_{F})}\left( \Psi_{storm}(\mathcal{Q}^{k}) - \Psi_{storm}(\mathcal{Q}^{k+1}) + \frac{\eta_{k}}{2}\| \be^{k}\|^{2}\right) \notag                                                                                                               \\
\leq &\, \frac{4 [ 1 + (1 + \frac{3\tilde{\delta}}{\tilde{L}^2_F})(\eta_{k}\tilde{L}_{F})^{2}]}{\eta_{1}(1 - \eta_{k}\tilde{L}_{F})}\left( \Psi_{storm}(\mathcal{Q}^{k}) - \Psi_{storm}(\mathcal{Q}^{k+1})\right) + \frac{2 [ 1 + (1 + \frac{3\tilde{\delta}}{\tilde{L}^2_F})(\eta_{k}\tilde{L}_{F})^{2}]}{1 - \eta_{k}\tilde{L}_{F}}\| \be^{k}\|^{2}\notag \\
\leq &\,\frac{4 [ 1 + C_{\delta}(\eta_{k}\tilde{L}_{F})^{2}]}{\eta_{1}(1 - \eta_{k}\tilde{L}_{F})}\left( \Psi_{storm}(\mathcal{Q}^{k}) - \Psi_{storm}(\mathcal{Q}^{k+1})\right) + \frac{2 [ 1 +C_{\delta}(\eta_{k}\tilde{L}_{F})^{2}]}{1 - \eta_{k}\tilde{L}_{F}}\| \be^{k}\|^{2}\notag                                                                     \\
\leq &\, \frac{2(C_{\delta}+ 4)}{\eta_{1}}\left( \Psi_{storm}(\mathcal{Q}^{k}) - \Psi_{storm}(\mathcal{Q}^{k+1})\right) + (C_{\delta}+ 4) \| \be^{k}\|^{2},
\end{align} where the third inequality follows from $\tau^{2}\leq \tilde{\delta}/ L_{F}^{2}$,
    the fourth inequality is derived from $\tilde{L}_{F}\geq L_{F}$ and
    $\eta_{k}\equiv \eta_{1}$, and the last inequality holds because $(1 + cx^{2}
    ) / (1 - x) \leq 2 + \frac{c}{2}$ for $0 < x \leq 1/2$ and constant $c >1$ under
    parameter conditions \eqref{Eq: the parameters assumptions in storm in suppl}.

    On the other hand, by taking the conditional expectation on both sides of \eqref{Eq: the stationarity error bound in proof storm}
    and averaging over $T$, we obtain
    \begin{align}
             & \frac{1}{T}\sumT \EE_{\bxi^{[k]}}\left[\text{dist}^{2}(\bm{0}, \partial \calL(\bx^{k}, \bB^{k}, \bm{\Lambda}^{k}))\right] \notag                                                                                                                                                                      \\
        \leq & \,\frac{S_{4}}{C_{\bB}T}\sumT \left( \EE_{\bxi^{[k]}}\left[\Psi_{storm}(\calQ^{k})\right] - \EE_{\bxi^{[k+1]}}\left[\Psi_{storm}(\calQ^{k+1}) \right] \right)+ \frac{1}{T}\sumT S_{3}^{k}\EE_{\bxi^{[k]}}\left[ \Delta_{\bm{x}}^{k+1}\right] \notag                                                     \\
             & +\frac{1}{T}\sumT \left( 2 + \frac{S_{4}\eta_{1}}{2C_{\bB}}\right) \EE_{\bxi^{[k]}}\left[\| \be^{k}\|^{2}\right] \notag                                                                                                                                                                                 \\
        \leq & \frac{\frac{S_4}{C_B}\tilde{\Delta}_{1}}{ T}+ \frac{1}{T}\sumT S_{3}^{k}\EE_{\bxi^{[k+1]}}\left[ \Delta_{\bm{x}}^{k+1}\right]+\left( 2 + \frac{S_{4}\eta_{1}}{2C_{\bB}}\right)\frac{1}{T}\sumT \EE_{\bxi^{[k]}}\left[\| \be^{k}\|^{2}\right].\label{Eq: the averaged stationarity error bound in proof}
    \end{align}

    Then substituting \eqref{Eq: the scaling for x sequence} into \eqref{Eq: the averaged stationarity error bound in proof}
    yields
    \begin{align}
             & \frac{1}{T}\sumT \EE_{\bxi^{[k]}}[\text{dist}^{2}(\bm{0}, \partial \calL(\bx^{k}, \bB^{k}, \bm{\Lambda}^{k}))] \notag                                                                                                                                                     \\
        \leq & \frac{ \left( \frac{S_4}{C_B}+ \frac{2(C_{\delta}+ 4)}{S_4\eta_1}\right)\tilde{\Delta}_{1}}{ T}+\left( 6+ \frac{S_{4}\eta_{1}}{2C_{\bB}}+ C_{\delta}\right)\frac{1}{T}\sumT \EE_{\bxi^{[k]}}[\| \be^{k}\|^{2}].\label{Eq: the averaged stationarity error bound in proof 2}
    \end{align}

    Since $\rho_{k}\equiv \rho_{1}$ for $k \geq 1$, then it follows from \eqref{Eq: weighted cumulative error bound}
    that
    \begin{align}
        \label{Eq: the averaged cumulative error bound}\frac{1}{T}\sumT \EE_{\bxi^{[k]}}[\| \be^{k}\|^{2}]\leq \frac{16 \eta_{1}L_{F}^{2}\tilde{\Delta}_{1}}{\rho_{1}T}+ \frac{2\sigma^{2}}{b_{1}T}+ \frac{4\rho_{1}\sigma^{2}(T-1)}{bT}
    \end{align}
    Then, substituting \eqref{Eq: the averaged cumulative error bound} into \eqref{Eq: the averaged stationarity error bound in proof 2}
    and taking the full expectation on both sides yields \eqref{Eq: the averaged stationarity error bound in lemma}.\eproof
\end{proof}
We now establish the convergence rate of \Cref{alg:our_algorithm_storm}. Let $T$ denote 
the maximum number of iterations. To ensure \eqref{Eq: the parameters assumptions in storm in suppl}, 
we set the parameters as follows:
\[
    \eta_{k}= \frac{\eta}{\tilde{L}_{F}T^{l}},\quad \rho_{k}= \frac{8\rho\eta^{2}}{T^{q}}
    , \quad b_{1}= c_{b}T^{p},
\]
where $q \leq 2l$, $0< \eta \leq \frac{1}{2}$,
$1 \leq \rho \leq \frac{1}{8\eta^{2}}$, $0 < p \leq 1$, and $c_{b}$, $b$ are given constants 
independent of $T$. Then the upper bound of \eqref{Eq: the averaged stationarity error bound in lemma} is
\[
    \text{Rate}= \mathcal{O} (\max \{ T^{l-1}, T^{q-p-1}, T^{-2q}\}).
\]
We choose $l = \frac{1}{3}$, $q = 2l = \frac{2}{3}$, and $p=\frac{1}{3}$. Then the upper bound is 
$\mathcal{O}(T^{-2/3})$, and the oracle complexity is
\[
    \text{Complexity} = b_{1}+ (T-1)b = \mathcal{O}(T^{1/3}) + \mathcal{O}(T) = \mathcal{O}(T) = \mathcal{O}(\varepsilon^{-3}).
\]

Based on the above analysis and parameter conditions \eqref{Eq: the parameters assumptions in storm in suppl}, 
we first set the parameters $\{\eta_{k}\}$, $\{\rho_{k}\}$, and $\tau$ as follows:
\begin{align}
    \label{Eq: the parameters conditions in the convergence rate of storm}\eta_{k}= \frac{\eta}{\tilde{L}_{F}T^{1/3}},\quad \rho_{k}= \frac{8\rho\eta^{2}}{T^{2/3}}, \quad \tau \leq \frac{\sqrt{\tilde{\delta}}}{L_{F}},
\end{align}
where $0< \eta \leq \frac{1}{2}$, $1 \leq \rho \leq \frac{1}{8\eta^{2}}$, and $\tilde{\delta}= c_{\delta}\delta^{2}$ are given constants independent of $T$ with some $\delta$, $c_{\delta}> 0$.
Then, the convergence rate of \Cref{alg:our_algorithm_storm} is established as follows:

\begin{lemma}[Convergence rate of \Cref{alg:our_algorithm_storm}]
    \label{Lemma: convergence rate of storm} Under \Cref{Ass: main assumptions} and
    parameter conditions \eqref{Eq: the parameters conditions in the convergence rate of storm},
    for \(\forall\) $T \geq 1$, there exists $R$ uniformly selected from
    $\{2,\ldots,T+1\}$ such that
    $(\bx^{R}, \bB^{R}, \bLmbd^{R})$ satisfies \eqref{equ:esp} with
    \begin{align}
        \label{Eq: the convergence rate of storm} & \EE[\text{dist}^{2}(0, \partial \calL(\bx^{R}, \bB^{R}, \bm{\Lambda}^{R}) ] \leq \frac{\tilde{L}_{\Delta}\tilde{\Delta}_{1}}{T^{2/3}}+ C_{\sigma}\left( \frac{\sigma^{2}}{ 4\rho \eta^{2}b_{1}T}+ \frac{32\rho\eta^{2}\sigma^{2}}{b}\frac{1}{T^{2/3}}\right).
    \end{align}
    where $\tilde{L}_{\Delta}= \frac{S_{4}}{C_{\bB}}\left( 1 + \frac{8\eta^{2}}{\tilde{L}_{F}}
    \right) + \frac{2(C_{\delta}+ 4) \tilde{L}_{F}}{\eta}+ ( 7 + 3C_{\delta})$ and
    $C_{\sigma}= 6 + C_{\delta}+ \frac{ S_{4}\eta}{2C_{\bB}\tilde{L}_{F}}$.
\end{lemma}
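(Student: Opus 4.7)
The plan is to derive the bound by specializing the averaged stationarity error bound from the preceding lemma (the one concerning \(\frac{1}{T}\sum_{k=1}^T \mathbb{E}[\text{dist}^2(\mathbf{0}, \partial \calL(\bx^k, \bB^k, \bLmbd^k))]\)) to the constant parameter regime prescribed by \eqref{Eq: the parameters conditions in the convergence rate of storm}, and then invoke the uniform sampling argument to obtain a single-iterate guarantee.

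First I would verify that the parameters in \eqref{Eq: the parameters conditions in the convergence rate of storm} satisfy \eqref{Eq: the parameters assumptions in storm in suppl}, which is required for the preceding lemma to apply. Since \(\eta_k \equiv \eta/(\tilde L_F T^{1/3})\) is constant, monotonicity \(\eta_{k+1} \leq \eta_k\) is automatic. The condition \(\eta_k \tilde L_F \leq 1/2\) reduces to \(\eta/T^{1/3} \leq 1/2\), which holds for \(T \geq 1\) under \(\eta \leq 1/2\). The condition \(8\eta_1 \eta_k L_F^2 \leq \rho_k \leq 1\) becomes \(8 \eta^2 L_F^2/(\tilde L_F^2 T^{2/3}) \leq 8\rho \eta^2/T^{2/3}\), i.e., \(L_F^2/\tilde L_F^2 \leq \rho\), which is satisfied since \(\tilde L_F \geq L_F\) and \(\rho \geq 1\); the upper bound \(\rho_k \leq 1\) follows from \(\rho \leq 1/(8\eta^2)\). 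The constraint on \(\tau\) is identical to the assumption.

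Next I would plug the chosen step sizes into the four groups of terms appearing in the averaged bound \eqref{Eq: the averaged stationarity error bound in lemma}. The key substitutions are: \(1/\eta_1 = \tilde L_F T^{1/3}/\eta\); and \(16 \eta_1 L_F^2/\rho_1 = 16 (\eta/\tilde L_F) L_F^2 T^{-1/3} \cdot T^{2/3}/(8\rho \eta^2) = 2 L_F^2 T^{1/3}/(\rho \eta \tilde L_F) \leq 2 \tilde L_F T^{1/3}/(\rho\eta)\) (using \(L_F \leq \tilde L_F\)); and \(S_4 \eta_1/(2 C_\bB) = S_4 \eta/(2 C_\bB \tilde L_F T^{1/3})\), which is absorbed by \(S_4 \eta/(2 C_\bB \tilde L_F)\) after noting \(T^{-1/3} \leq 1\). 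After collecting the deterministic \(\tilde\Delta_1/T^{2/3}\) contributions (the \(\tilde\Delta_1/T\) term is dominated by \(\tilde\Delta_1/T^{2/3}\)), the coefficient consolidates into \(\tilde L_\Delta = \frac{S_4}{C_\bB}(1 + \frac{8\eta^2}{\tilde L_F}) + \frac{2(C_\delta + 4) \tilde L_F}{\eta} + (7 + 3C_\delta)\), with the \(8\eta^2/\tilde L_F\) factor stemming from the product \(S_4/C_\bB \cdot 16 \eta_1 L_F^2/\rho_1\) and the residual constants coming from the \((6 + C_\delta)\) prefactor after substitution. Similarly, the variance-type terms yield \(C_\sigma \bigl(\sigma^2/(4\rho\eta^2 b_1 T) + 32 \rho \eta^2 \sigma^2/(b T^{2/3})\bigr)\), where \(2\sigma^2/(b_1 T) = 16 \rho \eta^2 \cdot \sigma^2/(8\rho\eta^2 b_1 T)\) gets folded into the \(\rho_1 = 8\rho\eta^2/T^{2/3}\) scaling, producing the stated \(1/(4\rho\eta^2 b_1 T)\) form, and \(4\rho_1 \sigma^2 (T-1)/(bT) \leq 32\rho\eta^2 \sigma^2/(b T^{2/3})\).

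Finally, sampling \(R'\) uniformly from \(\{1,\dots,T\}\) and setting \(R = R' + 1\) converts the averaged expectation into \(\mathbb{E}[\text{dist}^2(\mathbf{0}, \partial \calL(\bx^R, \bB^R, \bLmbd^R))]\) bounded by the same right-hand side, yielding \eqref{Eq: the convergence rate of storm}. The main obstacle is purely bookkeeping: tracking how the three separate \(\tilde\Delta_1\)-coefficients in \eqref{Eq: the averaged stationarity error bound in lemma} combine with the \(S_4 \eta_1/(2C_\bB)\) cross-terms after substituting the \(T\)-dependent stepsizes, and verifying that every \(T\)-power collapses to either \(T^{-2/3}\) (for the dominant deterministic and mini-batch-noise contributions) or \(T^{-1}\) (for the large initial-batch term), with no hidden \(T\)-growth that would break the \(\calO(T^{-2/3})\) rate.
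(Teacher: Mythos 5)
Your proposal is correct and follows essentially the same route as the paper: the paper's own ``proof'' of this lemma is exactly the substitution of the constant choices $\eta_k=\eta/(\tilde L_F T^{1/3})$, $\rho_k=8\rho\eta^2/T^{2/3}$ into the averaged stationarity error bound of the preceding lemma (after checking the parameter conditions \eqref{Eq: the parameters assumptions in storm in suppl}, as you do), followed by the uniform-index-sampling argument. The constant bookkeeping you defer is likewise left implicit in the paper, so no additional ideas are missing.
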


From \eqref{Eq: the convergence rate of storm}, this result is consistent with the batch size selection strategy outlined above: 
to achieve the $\mathcal{O}(T^{-2/3})$ convergence rate, we require
$b_{1}= \Theta(T^{1/3})$  and $b_{k}= \mathcal{O}(1)$ for $k > 1$.
With this parameter choice, we proceed to analyze the oracle complexity of \Cref{alg:our_algorithm_storm}.
\subsubsection{The proof of \Cref{Theorem: the oracle complexity in STORM}}
\begin{proof}
    When $b_{1}= c_{b}T^{1/3}$, we have
    \[
        \EE[\text{dist}^{2}(0, \partial \calL(\bx^{R}, \bB^{R}, \bm{\Lambda}^{R})
        ] \leq \frac{1}{T^{2/3}}\left(\tilde{L}_{\Delta}\tilde{\Delta}_{1}+ C_{\sigma}
        \left( \frac{\sigma^{2}}{ 4\rho \eta^{2}c_{b}}+ \frac{32\rho\eta^{2}\sigma^{2}}{b}
        \right) \right).
    \]
    With the conditions on $T$ in \eqref{Eq: the iterate number upper bound of algorithm VR},
    this gives
    $\EE[\text{dist}^{2}(0, \partial \calL(\bx^{R}, \bB^{R}, \bm{\Lambda}^{R}))]
    \leq \varepsilon^{2}$. Then, the oracle complexity is
    \[
        \text{Complexity}= b_{1}+ (T-1)b = \calO(T^{\frac{1}{3}}) + \calO(T) = \calO
        (T) = \calO(\varepsilon^{-3}),
    \]
    which completes the proof. \eproof
\end{proof}


\end{document}